\theoremstyle{definition}
\newtheorem{theorem}{Theorem}[section]
\newtheorem{lem}[theorem]{Lemma}
\newtheorem{prop}[theorem]{Proposition}
\newtheorem{cor}[theorem]{Corollary}
\newtheorem{definition}[theorem]{Definition}
\newtheorem{claim}[theorem]{Claim}
\newtheorem{example}[theorem]{Example}
\newtheorem{conjecture}[theorem]{Conjecture}
\newtheorem{remark}[theorem]{Remark}
\newtheorem{note}[theorem]{Note}
\newtheorem{Thm}[theorem]{Theorem}
\newtheorem{Claim}[theorem]{Claim}
\newtheorem{Prop}[theorem]{Proposition}
\newtheorem{Lem}[theorem]{Lemma}
\newtheorem{Def}[theorem]{Definition}
\newtheorem{Rem}[theorem]{Remark}
\newtheorem{Cor}[theorem]{Corollary}
\newtheorem{Conj}[theorem]{Conjecture}
\newcommand{\mC}{{\mathbb C}}
\newcommand{\mS}{\mathbb S}
\newcommand{\mZ}{{\mathbb Z}}
\newcommand{\ho}{\hookrightarrow}
\newcommand{\Gg}{\gamma}
\newcommand{\bO}{\Omega}
\newcommand{\bs}{\sigma}
\newcommand{\ep}{\epsilon}
\newcommand{\D}{\Delta}
\newcommand{\kk}{\kappa}
\newcommand{\mcB}{\mathcal B}
\newcommand{\mcD}{\mathcal D}
\newcommand{\mcE}{\mathcal E}
\newcommand{\mcG}{\mathcal G}
\newcommand{\mcK}{\mathcal K}
\newcommand{\mcO}{\mathcal O}
\newcommand{\mcS}{\mathcal S}
\newcommand{\fg}{\mathfrak g}
\newcommand{\fm}{\mathfrak m}
\newcommand{\fz}{\mathfrak z}
\newcommand{\ti}{\tilde}
\newcommand{\wt}{\widetilde}
\newenvironment{pf}{\proof[\proofname]}{\endproof}
\newcommand{\CC}{{\mathcal C}}
\newcommand{\PP}{{\mathcal P}}
\renewcommand{\H}{{\mathcal H}}
\newcommand{\Hb}{{\overline{\mathcal H}}}
\newcommand{\Ht}{{\tilde \H}}
\newcommand{\Endtil}{{\widetilde {End}}}
\newcommand{\Hh}{\hat{\mathcal H}}
\newcommand{\fB}{{\mathfrak B}}
\newcommand{\fC}{\mathfrak C}
\newcommand{\Pone}{{\mathbb P}^1}
\newcommand{\Aone}{{\mathbb A}^1}
\newcommand{\Lotimes}{\overset{\rm L}{\otimes}}
\newcommand{\uHom}{\underline{\mathrm{Hom} }}
\newcommand{\F}{{\mathcal F}}
\newcommand{\A}{{\mathcal A}}
\newcommand{\MM}{{\mathcal M}}
\renewcommand{\O}{{\mathcal O}}
\newcommand{\Z}{{\mathfrak Z}}
\newcommand{\Zbar}{{\overline \Z}}
\newcommand{\Zb}{{\overline \Z}}
\newcommand{\Zt}{{\tilde \Z}}
\newcommand{\Ztil}{{\tilde \Z}}
\newcommand{\g}{{\mathfrak g}}
\newcommand{\Ve}{{\mathcal V}}
\newcommand{\KK}{{\mathcal K}}
\newcommand{\Zet}{{\mathbb Z}}
\newcommand{\cupl}{\bigcup\limits}
\newcommand{\suml}{\sum\limits}
\newcommand{\imbed}{\hookrightarrow}
\newcommand{\iso}{{\widetilde \longrightarrow}}
\newcommand{\To}{\longrightarrow}
\newcommand{\Qu}{{\mathbb Q}}
\newcommand{\RE}{{\mathbb R}}
\newcommand{\Ce}{{\mathbb C}}
\newcommand{\rank}{\operatorname{rank}}
\newcommand{\Smb}{{\overline{Sm}}}
\newcommand{\Rb}{{\overline{R}}}
\newcommand{\bA}{\overline{\A}}
\newcommand{\bInd}{\overline{\text{Ind}}}
\newcommand{\uInd}{\underline{\text{Ind}}}
\newcommand{\LT}{{^LT}}
\newcommand{\al}{\alpha}
\newcommand{\wh}{\widehat}
\newcommand{\la}{\lambda}
\newcommand{\La}{\Lambda}
\newcommand{\mcA}{\mathcal K}
\begin{document}

\title[Character values and Hochschild homology]{Character values and Hochschild homology}
% and compactified category of representations of a $p$-adic group}
\author{Roman Bezrukavnikov, David Kazhdan}

\begin{abstract}
We present a conjecture (and a proof for $G=SL(2)$) generalizing a result of J.~Arthur
which expresses a character value of a cuspidal representation of a $p$-adic
group as a weighted orbital integral of its matrix coefficient. It also generalizes 
a conjecture by the second author proved  by Schneider-Stuhler and (independently) the first author.
The latter statement expresses an elliptic character value as an orbital integral of a
 pseudo-matrix coefficient defined via the Chern character map taking
value in zeroth Hochschild homology of the Hecke algebra. 
The present conjecture generalizes the construction of pseudo-matrix coefficient
using  compactly supported Hochschild homology, as well as a modification
of the category of smooth representations, the so called compactified category
of smooth $G$-modules.
This newly defined "compactified pseudo-matrix coefficient" lies in a certain
space $\KK$ on which the weighted orbital integral is a conjugation invariant linear functional,
our conjecture states that evaluating a weighted orbital integral on the compactified pseudo-matrix
coefficient one recovers the corresponding character value of the representation. 

We also discuss the properties of the averaging map from $\KK$ to the space of invariant distributions, partly building
on  works of Waldspurger and Beuzart-Plessis. \end{abstract}

\maketitle

\tableofcontents

\tableofcontents

\section{Introduction}

Let $G$ be a reductive % split
group over a local nonArchimedean field $F$.

The goal of the article is to present an algebraic expression for a character of an admissible representation of $G$ on a compact element.

The statement is presented as a conjecture (see Conjecture \ref{char_form_conj})
for a general reductive group, it is proved in the paper for $G=SL(2)$. We also describe a modification
of the category $Sm=Sm(G)$ of finitely generated smooth representations, the so called {\em compactified category} of smooth $G$-modules,
 which plays a key role in our algebraic description of character values and may have an independent interest.

To describe the context for these constructions recall a conjecture of \cite{cusp}
proved in \cite{SS} and \cite{thes}.

Let $\H=\cup_K \H_K$ be the Hecke algebra of locally
constant compactly supported $\Ce$-valued measures. Thus $Sm(G)$ is identified with the category
of finitely generated nondegenerate $\H$ modules \cite{BZ}.

Let $C(\H)=\H/[\H,\H]=\H_G=HH_0(\H)=HH_0(Sm)$ be the cocenter
of $\H$. Here $\H_G$ denotes coinvariants with respect to the conjugation action,
while  $HH_*$ stands for Hochschild homology, and its
second appearance refers to the notion of Hochschild homology of an abelian category.
%\cite{??}

Since $\H$ is Noetherian and has finite homological dimension, there is a well defined 
Chern character (also called the Hattori-Stallings or Dennis trace) map $ch: K^0(Sm)\to C(\H)$
(we will abbreviate $ch([M])$ to $ch(M)$). It has been conjectured in \cite{cusp}
and proven in \cite{SS}, \cite{thes}
that for an {\em elliptic} regular semisimple element $g\in G$ and an admissible
representation $\rho$ we have
\begin{equation}\label{ell}
\chi_\rho(g)=O_g(ch(\rho)),
\end{equation}
where $O_g$ denotes the orbital integral. Here we  use that $O_g:\H\to \Ce$ being
 conjugation invariant factors through $C(\H)$.
 
 If $\rho$ is a cuspidal irreducible representation then (assuming that $G$ has a compact center) a matrix coefficient
 $m_\rho\in \H$  is a representative of the class $ch(\rho)\in C(\H)$. 
 Thus in this case \eqref{ell} reduces to an earlier result of Arthur \cite{Ar1}.
 However, the latter applies also to nonelliptic regular semisimple
 elements: for such an element $g$ and a cuspidal irreducible
 representation $\rho$ Arthur has proved that 
 \begin{equation}\label{Ar_f}
 \chi_\rho(g)=WO_g(m_\rho),
 \end{equation}
 where $WO_g$ denotes the {\em weighted orbital integral}. 
 Our Conjecture \ref{char_form_conj} provides a generalization of  \eqref{ell} to all
regular semisimple compact elements $g$, which for a cuspidal representation $\rho$ reduces to \eqref{Ar_f}.

The first step  in this direction is a generalization of the map $ch:K^0(Sm)\to C(\H)$.
For our present purposes we need to modify both the source and the target
of this map. We replace the target $C(\H)=\H_G$  by  $\KK_G$ where 
 $\KK\subset \H$ is a subspace invariant under
the conjugation action of $G$, the so called space of 
"weightless"
functions.\footnote{This \label{ftnt} adjective reflects the fact that weighted orbital integrals
restricted to this space are independent of the choices involved in choosing the weight function
on an orbit.
This space has appeared in the literature (see \cite{BP}, \cite{W} 
and references therein) where it was called the space of {\em strongly} cuspidal function. We refrain from using this terminology since
we use the term "cuspidal function" in the sense of \cite{cusp} where it refers to a function
acting by zero in any parabolically induced representation, thus in our terminology
$\KK$ contains the space of cuspidal functions. The term "cuspidal
function" was used in a different sense in
\cite{BP}, \cite{W} etc., so that $\KK$ is contained in the set of cuspidal functions in the sense of {\em loc. cit.} }
 Definition and some properties of $\KK$ are discussed in
section \ref{sec_K}. The key property is that $WO_g|_\KK$ is a $G$-invariant
functional for any regular semisimple element $g\in G$; furthermore, there is a well defined averaging map $Av$
from $\KK$ to the space of invariant generalized functions on $G$ and for $\phi\in \KK$
the value of $WO_g(\phi)$ coincides with $Av(\phi)(g)$ (the latter is well defined
since $Av(\phi)$ is in fact a locally constant function on the set of regular elements
in $G$). We also provide a conjecture with a proof for $PGL(2,F)$, $char(F)=0$ 
%and a sketch of a partial proof for $SL(3)$) 
describing the image of the averaging map from $\KK$ to 
the space of invariant distributions. 

To describe the source of the map generalizing $ch$
 we need some new ingredients. One of them is
the so called {\em compactified category} of smooth (finitely generated) representations $\Smb$.
 
 The abelian category $\Smb$ is defined in section \ref{compcat}.
 Recall that according to Bernstein \cite{centre}, $Sm$ can be identified
 with the category of coherent sheaves of modules over a certain
 sheaf of algebras over a scheme which is an infinite union of affine
 algebraic varieties, the spectrum $\Z$ of the Bernstein center of $G$. The category $\Smb$ can be described as the
 category of coherent sheaves of modules over a certain coherent sheaf of algebras over a
 (componentwise) compactification of $\Z$.
 % The category $Sm$ is a Serre quotient of $\Smb$.
 An admissible module $\rho$ can also be viewed as an object in
 $\Smb$, so we can apply the Chern character to the class of $\rho$
 obtaining  $\bar{ch}(\rho)\in HH_0(\Smb)$.
 
 We also need another invariant of the category $Sm$, namely
 the {\em compactly supported} Hochschild homology $HH_*^c(Sm)$
 which is the derived global sections with compact support in the 
 sense of \cite{D} of localized Hochschild homology
 $R\underline{Hom}_{\H\otimes \H^{op}}(\H,\H)$.
 
 We have natural maps  $HH_*^c(Sm)\to HH_*(\Smb)\to HH_*(Sm)$;
 for an admissible module $\rho$ we have its compactly supported
 Chern character $ch^c(\rho)\in HH_0^c(Sm)$, so that $\bar{ch}(\rho)$
 and $ch(\rho)$ equal the images of $ch^c(\rho)$ under the corresponding
 maps.
 
The first statement in the main conjecture (a theorem for $SL(2)$) provides a natural isomorphism 
$$\KK^c_G \cong Im(HH_0^c(Sm)\to HH_0(\Smb)),$$ 
where $\KK^c\subset \KK$ is the subspace of measures supported on compact elements.
By the previous paragraph, $\bar{ch}(\rho)$ belongs to that image, thus
we obtain a homological construction of an element in $\KK^c_G$ 
from an admissible representation, the so called "compactified pseudo-matrix coefficient"
of the representation. 

The second main statement  (proved for $SL(2)$) asserts that
for a compact regular element $g$ and an admissible representation $\rho$
 we have $WO_g(\bar{ch}(\rho))=\chi_\rho(g)$. Notice that for a noncompact
 regular element $g$ the value of $\chi_\rho(g)$ coincides with a character
 value of the  Jacquet functor applied to $\rho$ \cite{DCas},
 in particular it vanishes for a cuspidal module.

We view Conjecture \ref{char_form_conj} as an  algebraic statement underlying some aspects of Arthur's local trace
formula \cite{Ar}, while equality \eqref{ell} underlies the elliptic part of the local trace formula (see also
Remark \ref{int_rem} below).
We plan to develop this theme in a future publication.

\bigskip
 
{\bf{Acknowledgements.} }
We thank Joseph Bernstein, Vladimir Drinfeld, Dmitry Kaledin and Dmitry Vaintrob for many useful discussions
over the years. In particular, the definition of the compactified category was conceived as a result of discussions
with Kaledin and it took its present form partly due to discussions with Drinfeld.

We also thank % for providing a reference, as well as
Rapha\" el Beuzart-Plessis, Dan Ciubotaru, Eric Opdam and Jean-Loup Waldspurger for helpful correspondence.
%correcting mistakes in the previous draft.

The project received funding from ERC under grant agreement No 669655;
R.B. was partly supported by  NSF grant DMS-1601953, 
the collaboration was partly supported by  the US-Israel Binational Science Foundation.

\section{Weightless functions and invariant distributions}\label{sec_K}
Let $\H=\H(G)$ be the Hecke algebra of compactly supported locally
 constant measures on $G$, the convolution product on $\H(G)$ will be denoted by $*$. 
 For an open subsemigroup $S\subset G$ we let $\H(S)\subset \H(G)$ denote the subalgebra
 of measures supported on $S$.
 We denote by $\mcD$ the space of generalized functions on $G$ 
 (that is the space of linear functionals on $\H (G)$) and by $\mcD ^G\subset \mcD $  the subspace of invariant generalized functions.
 Let $\H_{cusp}$, $\mcD^G_{cusp}$ be the cuspidal part of $\H$, $\mcD^G$,
 i.e. $\H_{cusp}$ consists of functions acting by zero in any parabolically
 induced representation 
  and $\mcD^G_{cusp}$ consists of distributions vanishing
 on the orthogonal complement of $\H_{cusp}$ (cf. footnote \ref{ftnt} above).
Until the end of the section we assume for simplicity of notation that $G$ has compact center.
%is  semisimple. 
 
 Then  {\em averaging}  with respect to  conjugations yields a well defined map $H_0(G,\H_{cusp})\to \mcD^G_{cusp} $
 given by $f\mapsto \int\limits_{G} \ \frac{{^gf}}{dg} dg$ for a Haar measure $dg$ on $G$.
 %, where $G^0$ is the subgroup generated by compact subg%roups (the kernel of unramified characters).
%  (thus this map is independent of the choice of $dg$). 
 % In this case  the restriction of the  natural map 
%$\theta _\mcH :H^0(G,\mcS )\to H_0(G, \mcS )$ 
 %to the cuspidal subspace  defines an isomorphism 
%$\theta _{cusp}:H^0(\mcS _{cusp})\to H_0(\mcS _{cusp})$ and the
 %inverse is  induced by the averaging
 %$\tau :\mcS _{cusp}\to \mcD ^G$
%where $\mcD ^G$ is the space of invariant functionals on $\mcS (G)$. 

In this section we define a larger subspace $\mcA $ in $\H$
on which the averaging map is still well defined and 
 conjecture that the map $\tau$ defines an embedding 
$H_0(G,\mcA )\ho \mcD ^G$. Moreover, we propose a conjectural
 description of the  image of $\tau$. We prove this conjecture in
 the case when $G=PGL(2)$. % and sketch a proof of a part of this conjecture in the case when $G=SL(3)$.

\subsection{The conjecture}
We start with some notation. Let 
$ \mcO \subset F$ be the ring of integers  and $\pi $ be a
 generator of  the maximal ideal $\fm$ of $\mcO$. Let 
 $val :F^\star \to \mZ$ be the valuation such that $val(\pi)=1$. 
We define $\| x\| =q^{-val (x)}, x\in F^\star$ where $q=|\mcO / \fm|$.

For any smooth $F$-variety $X$ we denote by $\mcS (X)$ the space
 of locally constant measures on $X$ with compact support. In the
 case when $X$ is a homogeneous $G$-variety with a $G$-invariant
 measure $dx$ the map $f \to f /dx$ identifies $\mcS (X)$ with
the space of compactly supported locally constant functions
 on $X$. In this case we will not distinguish between functions
 and measures on $X$. Also, we will work with spaces $Y_P=(G/U_P\times G/U_P)/L$
 for a parabolic subgroup $P=LU_P\subset G$. In this case
 $\mcS(Y_P)$ will denote the space of integral kernels of operators
 $\mcS(G/U_P)\to \mcS(G/U_P)$, i.e. locally constant compactly supported
 sections of the $G$-equivariant locally constant sheaf $pr_1^*(\mu)$, where
 $pr_1:Y_P\to G/P$ is the first projection.

 % $F$-group. 
%To simplify notations we assume that $G$ is split. 
In particular, we fix a Haar measure $dg$
 on $G$ and identify the space  $\mcS (G)=\H(G)$ with 
 the space of compactly supported locally constant functions
 on $G$. Then $\D$ is identified with the space of distributions,  we also get the $L^2$ pairing $\langle\ ,\ \rangle$ on $\H(G)$.

%Let $K_0:=G(\mcO ), T\subset G$ be a maximal split torus
% defined over $\mcO$ and $\bL _+$ be the set of dominant coweights with it natural partial  order. For any $\bl \in  \bL _+$
% we define $t_\bl :=\bl (\pi )\in T$ and write 
%$$G_\bl :=K_0t_\bl K_0, G_{\leq \bl}:=\cup _{\mu \leq \bl}G_\mu$$
%where $\leq$ is the usual partial order on $\bL _+$.
%Then $G=\cup G_ \bl$. 

Let $\fg$ be the Lie algebra of $G$. We will assume that $\fg$
has a finite number of  nilpotent  conjugacy classes and that there exists a $G$-equivariant $F$-analytic  bijection $\phi$ between  a neighbourhood of $0$ in $\fg$ and  a neighbourhood of $e$ in $G$. 
We also assume that for a semisimple element $s\in G$  the Lie algebra 
$\fz$ of its centralizer $Z_G(s)$ admits a  $Z_G(s)$-invariant complement. 
These assumptions are well known to hold if $char(F)=0$ or if $char(F)> N$ for some $N$ depending on the rank of $G$,
see \cite[\S 1.8]{KV} for more precise information.

\begin{definition}%[distributions]
\label{distributions}

\begin{itemize}
\item We denote by $\mcG _{G ,e}$ the space of germs of $Ad$-invariant distributions near $0$ on $\fg$ which are restrictions
 of  linear combinations  of the  Fourier transforms of invariant
 measures on a nilpotent orbits. Using the bijection $\phi$ we 
 consider  $\mcG _{G,e}$ as  a space of germs of $Ad$-invariant
 distributions on $G$ near the identity.

\item For a semisimple element $s\in G$ we denote by
 $ \mcG _{Z_G(s),s}$ the   space of germs of distributions  on
 $Z_G(s)$ at $s$ obtained from the space $\mcG _{Z_G(s),e}$ by
 the shift by $s$.
 
 \item Let $s\in G$ be a semisimple element, $ X_s=G/Z_G(s), r:G\to  X_s$ be the natural projection and  $\Gg :X_s\to G$ be a continuous section.
We denote by $dz$ a $G$-invariant measure on $X_s$.

\item 
We denote by $\ti \kk :Z_G(s)\times X_s \to G$  the map given by
$(z,x)\to \Gg (x)sz(\Gg (x))^{-1}$. 

\end{itemize}
\end{definition}

Let $\fz\subset \fg$ be the Lie algebra of $Z_G(s)$. By assumption there exists a complementary $\fz$-invariant subspace 
$W\subset \fg$ and the map $\kk _0:\fz \oplus W\to \fg ,(z,w)\to z+Ad(s)w$ is a bijection. Therefore there exists an open neighborhood $R\subset Z_G(s)$ of $e$
such that the restriction $\kk$ of $\ti \kk$ on $R\times X_s$ is an open embedding.

\begin{definition}%[distribution]

\label{distribution0}

For any $ \bar \psi \in  \mcG _{Z_G(s),s}$ we choose a representative 
 $\ti \psi \in \mcD ^{Z_G(s)}(Z_G(s))$ of $\bar \psi$.

\begin{itemize}
\item For a function $f\in \mcS (G)$ and $z\in R\subset X_s$ we define 
$f_z \in \mcS (Z_G(s))$ by $f_ z:=f(\kk (z,x))$.

\item We define a function $\bar f$ on $R$ by $\bar f(z):=\ti \psi (f_z)$.

\item We define a distribution 
$\psi (\ti \psi) $ on $G$ by $\psi (f):=\int _{X_s}\bar f(z)$.

\item We denote by $[\psi (\ti \psi)]$ the germ of the distribution $\psi$ at $s$. 

\end{itemize}

\end{definition}

It is clear that for any two choices $\ti \psi , \ti \psi '$
of representatives of $\bar \psi$ the difference $\psi (\ti \psi) -\psi (\ti \psi ')$ vanishes on a $G$-invariant open neighborhood of $s.$ Therefore the germ $[\bar \psi]$
does not depend on a choice of a representative of $\bar \psi$.
 
%\begin{claim}
%For any $ \bar \psi \in  \mcG _{Z_G(s),s}$
% we can find a representative $\psi \in \mcD ^{Z_G(s)}(Z_G(s))$ of $\bar  \psi$ such that the integral 
%$$[\psi ]:=\int _{G/Z_G(s)}\psi ^gdg \in \mcD ^G$$
% is well defined. Moreover the germ of $[\psi]$ at $s$
% does not depend on a choice of a representative of $\bar \psi$.
%\end{claim}

\begin{definition}%[distributions']
\label{distributions'}

\begin{itemize}
\item We denote by $\mcG _s$ the space of germs at $s$  of 
$Ad$-invariant distributions of the form 
$[\psi ], \psi \in \mcG _{Z_G(s),s}$.

\item We denote by  $\mcE \subset  \mcD ^G$ the subspace of
 distributions $\alpha$ such that 

a) there exists a compact subset $C$ in $G$ such that $supp
 (\alpha)\subset G(C)$ and 

b) for any semisimple $s\in G $ the germ of $\alpha$ at $s$
 belongs to $\mcG _s$.

\end{itemize}

\end{definition}

\begin{remark}\label{Ealt}
If $char(F)=0$ then $\mcE$ admits an equivalent description as the space
of invariant distributions $\alpha$ satisfying the following requirements:

a) there exists a compact subset $C$ in $G$ such that $supp
 (\alpha)\subset G(C)$;

b) there exists a compact open subgroup $K\subset G$ 
such that for every element $z$ in the Bernstein center 
satisfying  $\delta_{K}*z=0$  we have
$\alpha*z=0$. 

Equivalence of the two definitions of $\mcE$ follows 
from\footnote{We thank Rapha\" el Beuzart-Plessis for pointing this out to us.}
\cite[Theorem 16.2]{HC}.

\end{remark}

\begin{definition}
\begin{itemize}

\item 
We define the space  $ \mcA (G)$ of {\it weightless}
 functions as the subspace in $\mcS (G)$ of functions $f$ such
 that  $\int _{u\in U_Q} f(lu)du=0, l\in L$
for all proper parabolic subgoups $Q=LU_Q\subset G$.

\item For a closed conjugation invariant subset $X$ of $G$ 
we define the space  $ \mcA (X)=\mcA_X\subset \mcS (X)$ 
 as the subspace of functions $f$ such that 
 $$\int _{u\in U_Q} f(lu)du=0, $$
for all proper parabolic subgroups $Q=LU_Q\subset G$ and $l\in L$
 such that $lU_Q\subset X$.
\end{itemize}
\end{definition}

\begin{remark}\label{diag_rem}
For $f\in \H$ and a parabolic $P=LU_P\subset G$ let $A_P(f)\in \mcS(Y_P)$ denote its orishperic
transform, i.e. the integral kernel of the action of $f$ on $\mcS(G/U_P)$. Let $\Delta_{Y_P}\subset Y_P$
be the preimage of diagonal under the projection $Y_P\to (G/P)^2$. Then $\Delta_{Y_P}\cong (G/U_P\times L)/L$,
where $L$ acts on the first factor by right translations and on the second one by conjugation.
It is easy to see that for $f\in \H$ we have $f\in \KK$ iff for any parabolic subgroup $P\subsetneq G$ we have $A_P(f)|_{\Delta_{Y_P}}=0$. 
\end{remark}

The following result is due to J.-L. Waldspurger
(see  \cite[Lemma 9]{W}).

\begin{prop}%[Waldspurger]
\label{Waldspurger}
For $f\in \H(G)$ the following are equivalent:

a) $f\in \KK$.

b) For any $h\in \H(G)$ the function $g\mapsto \langle ^gf, h\rangle$
has compact support.

\end{prop}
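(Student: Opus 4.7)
The plan is to analyze the asymptotic behaviour of $\langle {}^a f, h \rangle$ as $a$ goes to infinity in the Weyl chambers of a maximal split torus $A\subset G$, then combine this with the Cartan decomposition $G = KAK$ and continuity of the conjugation action of $G$ on $\H$ to reduce the problem for $g \to \infty$ to the problem for $a \to \infty$ in finitely many chambers, uniformly over the compact families $\{{}^{k_1}f\}$ and $\{{}^{k_2^{-1}}h\}$ indexed by $k_1, k_2 \in K$.

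Fix a proper parabolic $P = MU$ with opposite $\bar P = M\bar U$ and take $a\in A$ deep in the $P$-positive chamber, so that $Ad(a)$ expands $U$ and contracts $\bar U$. Starting from $\langle {}^a f, h\rangle = \int_G f(y)\, h(aya^{-1})\, dy$, I would parametrize the dense open Bruhat cell $\bar U M U \subset G$ by $y = \bar u m u$, with $dy = J \, d\bar u\, dm\, du$; the complement of this big cell is a finite union of lower-dimensional $F$-analytic subvarieties and so has Haar measure zero. Substituting $u = a^{-1}va$ yields $du = \delta_P(a)^{-1} dv$ and $aya^{-1} = (a\bar u a^{-1}) \cdot m \cdot v$. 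Fixing a common compact-open subgroup $K_0\subset G$ of bi-invariance for $f$ and $h$, once $a$ is deep enough in the chamber both $a\bar u a^{-1}$ and $a^{-1}va$ lie in $K_0$ (with $\bar u$, $v$ confined to the compact sets forced by the supports), so local constancy promotes the asymptotic formula to the exact identity
\[
\langle {}^a f, h\rangle \;=\; \delta_P(a)^{-1} \int_{M\times U}\left(\int_{\bar U} f(\bar u m)\, d\bar u\right) h(mv)\, J\, dm\, dv.
\]
The change of variables $\bar u\mapsto m\bar u m^{-1}$ rewrites the inner integral as $\delta_P(m)^{-1}\int_{\bar U} f(m\bar u)\, d\bar u$.

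If $f \in \KK$, this vanishes for all $m \in M$ by weightlessness applied to $\bar P$, so $\langle {}^a f, h\rangle = 0$ for $a$ deep in the $P$-positive chamber; letting $P$ range over a suitable finite family of proper parabolics (together with their opposites) covers the complement of a compact subset of $A$, giving (a) $\Rightarrow$ (b). Conversely, if $f\notin \KK$, pick a proper parabolic $P = MU$ and $m_0\in M$ with $\int_U f(m_0 u)\, du \neq 0$; the symmetric computation with $a$ in the $\bar P$-positive chamber produces
\[
\langle {}^a f, h\rangle \;=\; \delta_{\bar P}(a)^{-1} \int\left(\int_U f(um)\, du\right) h(m\bar v)\, J\, dm\, d\bar v
\]
for $a$ deep in that chamber, and choosing $h$ to be the characteristic function of a small product neighbourhood of a well-chosen point $(m_0, \bar v_0)\in M\bar U$ makes this nonzero for all such $a$, contradicting (b).

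The main technical obstacle is the passage from the asymptotic statements $a\bar u a^{-1}\to e$ and $a^{-1}va\to e$ to the exact identity displayed above: one must identify a single compact-open subgroup $K_0$ of bi-invariance for both $f$ and $h$, pin down the compact subsets of $\bar U$, $M$, $U$ outside of which the integrand vanishes, and verify quantitatively that both conjugates land inside $K_0$ once $a$ is deep enough in the chosen chamber. Once this uniform control is established, the remainder of the argument is bookkeeping with modular characters and Bruhat-cell measure theory.
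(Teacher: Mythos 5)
The paper itself gives no proof of Proposition \ref{Waldspurger}: it is attributed to Waldspurger with a pointer to \cite{W}, so there is no internal argument to compare against, and what follows assesses your sketch on its own merits. Your strategy — analyzing $\langle {}^a f, h\rangle$ on the big Bruhat cell for $a$ deep in $A$, reducing general $g$ to this via the Cartan decomposition and a finite family of conjugates of $f$ and $h$ — is the right one, and the implication (a) $\Rightarrow$ (b) does go through once the deferred support bookkeeping is carried out: the joint support constraints force the Bruhat coordinates into compact sets bounded away from the small cells, and the inner $\bar U$-integral factors off and is killed by weightlessness for $\bar P$.

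Both displayed identities are, however, slightly off in a way that matters for the converse. Writing $y=\bar u m u$ and substituting $v=aua^{-1}$ gives $aya^{-1}=(a\bar u a^{-1})(ama^{-1})v$, so the correct exact identity for $a$ deep is
\[
\langle {}^a f, h\rangle \;=\; \delta_P(a)^{-1}\int_{M\times U}\Bigl(\int_{\bar U} f(\bar u m)\,d\bar u\Bigr)\, h\bigl((ama^{-1})\,v\bigr)\,J\,dm\,dv,
\]
with $ama^{-1}$ rather than $m$ in the argument of $h$; the analogous correction applies to your formula for the converse. For a minimal parabolic $P$ one has $L_P=Z_G(A)$ and $ama^{-1}=m$, so your version is accurate; but one cannot restrict to minimal $P$, since $f\notin\KK$ may have all minimal constant terms vanishing while some larger constant term survives. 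In the forward direction the discrepancy is harmless, because the $\bar U$-integral vanishes regardless of what sits inside $h$. In the converse direction it is not: you fix a single $h$ supported near a point $(m_0,\bar v_0)$, but $ama^{-1}$ drifts as $a$ runs along the ray, which can carry the argument of $h$ off its support or introduce cancellation. The natural repair, which should be stated explicitly, is to take the ray inside $Z(L_P)\cap A$ in the $\bar P$-positive direction; there $ama^{-1}=m$ identically, your displayed formula becomes exact, and choosing $h$ in a neighbourhood where the inner $U$-integral of $f$ has constant sign yields $\langle {}^a f, h\rangle\neq 0$ along the entire ray. With that correction, and the quantitative control you already flag as the main obstacle, the argument is complete.
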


For any $f\in \mcS (G)$ we define  distribution
%$$\hat f_\bl :=\int _{g\in  G_{\leq \bl}}f^gdg\in \mcD$$
$\hat f $ by:
$$\langle \hat f ,h\rangle :=\int _{g\in  G}\langle ^gf, h  \rangle dg.$$

%\begin{prop}%[Waldspurger]
%\label{Waldspurger}
%For any weakly cuspidal function $f$ the sequence $\hat f_\bl $
%is convergent to a distribution $\hat f\in \mcD$.
%\end{prop}
%\begin{pf} One has to show that for any $\phi \in \mcS (G)$ the
% sequence
%$$\hat f_\bl (\phi ):=\int \int _{g\in  G_{\leq \bl},h\in G}f(ghg^{-1}\phi (h)dgdh$$ stabilizes for $\bl \to \infty$.

% This follows from   \cite[Lemma 9]{W}.
%\end{pf}

For future reference we mention the following. 

\begin{Lem} For $f\in \KK$ the distribution $\hat{f}|_{G^{rs}}$ 
(where $G^{rs}$ is the open set of regular semisimple elements)
is a locally constant function. For $g\in G^{rs}$ we have
$$\hat f (g) =WO_g(f),$$
where $WO_g$ denotes the {\em weighted orbital integral}.
\end{Lem}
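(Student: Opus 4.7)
The plan is to verify the claimed identity by pairing $\hat f$ against test functions supported near a single regular semisimple orbit, using a truncation of the outer averaging integral combined with Weyl integration. Since both $\hat f|_{G^{rs}}$ and the putative function $y \mapsto WO_y(f)$ are conjugation invariant, and invariant distributions on $G^{rs}$ are determined by their pairings against orbital integrals, it suffices to check the identity for test functions $h\in\H$ supported in a small $G$-invariant neighborhood of a regular element $y_0 \in T^{rs}$, for each maximal torus $T$.

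By Proposition \ref{Waldspurger}, the map $g\mapsto \langle ^gf, h\rangle$ has compact support, so choosing an exhaustion $\phi_n \nearrow 1$ of $G$ by compactly supported functions and applying dominated convergence yields
$$\langle \hat f, h\rangle = \lim_{n\to\infty} \int_G \phi_n(g) \int_G f(u)\, h(gug^{-1})\, du\, dg$$
after the substitution $u = g^{-1}xg$ in the inner integral. For each fixed $n$ the truncated integrand is compactly supported in $(g,u)$, so Fubini applies. Writing $g = tg'$ with $t\in T_u$ and $g'\in T_u\backslash G$, the inner $g$-integral at regular $u$ becomes the orbital integral of $h$ against the weight
$$w_n(u, g') := \int_{T_u} \phi_n(tg')\, dt$$
on $T_u\backslash G$. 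Arthur's theory of $(G,M)$-families provides a decomposition $w_n = v_{T_u} + r_n$, in which $v_{T_u}$ is the canonical $(G, T_u)$-weight entering the definition of $WO_{\bullet}(f)$ and $r_n$ is a spurious part concentrated along the directions of proper parabolics.

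The decisive step, and the main obstacle, is to show that the residual contribution
$$\int_G f(u) \int_{T_u \backslash G} h(g' u g'^{-1})\, r_n(u, g')\, dg'\, du$$
tends to $0$ as $n\to\infty$ for $f\in\KK$. This is the essence of the weightlessness hypothesis: expanding $r_n$ along the poset of parabolic chambers, the resulting partial integrals of $f$ along unipotent radicals produce precisely the diagonal restrictions $A_P(f)|_{\Delta_{Y_P}}$ for proper parabolics $P\subsetneq G$, all of which vanish by Remark \ref{diag_rem}. This is the same cancellation mechanism by which the weighted orbital integral is independent of the choices of weight and truncation on $\KK$; we are reverifying that independence in the integrated setting.

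Passing to the limit leaves only the principal contribution, and Weyl integration in the $u$-variable then gives
$$\langle \hat f, h\rangle = \sum_T \frac{1}{|W(T)|} \int_{T^{rs}} |D(t)|\, \Phi_h(t)\, WO_t(f)\, dt = \int_G h(y)\, WO_y(f)\, dy,$$
where $\Phi_h(t) := \int_{T\backslash G} h(g^{-1}tg)\, dg$ and the sum runs over conjugacy classes of maximal tori. Since $y\mapsto WO_y(f)$ is locally constant on each $T^{rs}$, it defines a locally constant function on $G^{rs}$ representing $\hat f|_{G^{rs}}$, which gives both assertions of the lemma.
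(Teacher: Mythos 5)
Your plan --- unwind $\langle\hat f, h\rangle$ by truncation, Fubini, a weight decomposition, and Weyl integration, with weightlessness killing the truncation-dependent part --- has the right shape, and is essentially the material the paper delegates to \cite[\S I.11]{ArIn}. But there is a genuine gap at exactly the point you yourself call ``the decisive step.'' The decomposition $w_n = v_{T_u} + r_n$ is asserted, not derived. Arthur's $(G,M)$-family theory applies to a \emph{specific} truncation (via the building, with a parameter $T$) and shows the truncated orbital integral is an exponential polynomial in $T$, whose constant term is the weighted orbital integral. Your $w_n(u,g') = \int_{T_u}\phi_n(\,\cdot\,)\,dt$ comes from an arbitrary exhaustion $\phi_n$ of $G$, which is not a priori the truncation Arthur analyzes; identifying the two (and extracting from $w_n$ a part independent of $n$ equal to Arthur's $v_{T_u}$) is precisely the content. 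Moreover $w_n(u,g')$ diverges pointwise as $n\to\infty$ whenever $T_u$ is isotropic, so ``$r_n\to 0$'' only makes sense after pairing against the orbital integral of $h\cdot f$; establishing that requires a quantitative expansion of $w_n$ over the face poset of chambers at infinity, together with a matching of the resulting boundary terms with the diagonal restrictions $A_P(f)|_{\Delta_{Y_P}}$ of Remark~\ref{diag_rem}. You assert this matching but do not carry it out --- and this is also exactly the analysis that shows $WO_g|_{\KK}$ is independent of the auxiliary choices, so you cannot appeal to that independence as a shortcut without circularity.

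A smaller slip: writing $g = tg'$ with $t\in T_u$, $g'\in T_u\backslash G$ is inconsistent with the kernel $h(g'ug'^{-1})$, since then $gug^{-1} = t(g'ug'^{-1})t^{-1}$ depends on $t$. One should either take $g = g't$ with $g'\in G/T_u$ and the kernel $h(g'ug'^{-1})$, or $g=tg'$ with $g'\in T_u\backslash G$ and the kernel $h(g'^{-1}ug')$. In sum: the proposal reconstructs the outline of the argument the paper outsources to Arthur, but leaves the decisive cancellation at the level of a plausible claim rather than a proof.
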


{\em Proof\ } follows from the definition and basic properties of the weighted orbital integral, see e.g. \cite[\S I.11]{ArIn}. \qed  

The group $G$ acts on $\mcA$ by conjugation.
It is clear that the map 
%$$\ti \tau:\mcA \to \mcD ^G,
$f\to \hat f$
factors through a map 
$\tau : H_0(G,\mcA )\to  \mcD ^G$. For any $f\in \mcA$ we
 denote by $[f]$ it image in $H_0(G,\mcA )$.

\begin{conjecture}%[imb]
\label{imb} a) $\hat{f}\in \mcE$ for $f\in \mcA$.

b) The map $\tau$ defines an isomorphism between $H_0(G,\mcA )$ and $\mcE$.

c) $\dim (H_0(G,\mcA (\bO) ))=1$ for any regular semisimple conjugacy class
 $\bO \subset G$. 
\end{conjecture}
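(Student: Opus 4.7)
I would address the three parts in the order (a), (c), (b), since (a) is needed for (b) and (c) is the most direct. For part (a), $G$-invariance of $\hat f$ and the containment $\mathrm{supp}(\hat f)\subset G\cdot\mathrm{supp}(f)$ are immediate, so the substance is the germ condition at every semisimple $s\in G$. Using the complement $W$ and the open embedding $\kk\colon R\times X_s\hookrightarrow G$ of Definitions \ref{distributions}--\ref{distribution0}, one descends $\hat f$ near $s$ to an invariant distribution $D_s$ defined on a neighborhood of $e\in Z_G(s)$, which is then transported via $\phi$ to $\fz=\mathrm{Lie}\,Z_G(s)$. The goal is to prove $D_s$ is a linear combination of Fourier transforms of nilpotent orbital integrals in $\fz$. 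For this I would invoke Harish-Chandra's germ expansion on $\fz$, which expresses an arbitrary invariant distribution as a nilpotent-germ part plus boundary contributions coming from proper parabolic subgroups of $Z_G(s)$. The weightless condition on $f$ should force the boundary contributions to vanish via direct computation of the constant terms $\int_{U_Q}f(lu)\,du$ for parabolics $Q=LU_Q$ of $G$ with $L\supset Z_G(s)$, using Proposition \ref{Waldspurger} to justify the necessary interchanges of integration.

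For part (c), let $\bO=G\cdot\gamma\cong G/Z_G(\gamma)$ be a regular semisimple conjugacy class. I first claim that $\mcA(\bO)=\mcS(\bO)$. Indeed, if $lU_Q\subset\bO$ with $U_Q\ne\{e\}$, then for each $u\in U_Q$ the element $lu$ would be $G$-conjugate to the semisimple $\gamma$, hence semisimple; but for $l$ regular in the Levi $L$, the centralizer $Z_G(l)$ is a torus meeting $U_Q$ trivially, so $lu$ fails to be semisimple for $u\ne e$, a contradiction. Thus the defining conditions of $\mcA(\bO)$ are vacuous. Integration against Haar measure on $G/Z_G(\gamma)$ furnishes a nonzero $G$-invariant functional on $\mcS(\bO)$, and by Frobenius reciprocity (uniqueness of $G$-invariant distributions on a homogeneous space) it is unique up to scalar, so $H_0(G,\mcA(\bO))=\mcS(\bO)_G\cong\mC$.

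For part (b), injectivity of $\tau$ uses (a) as follows: if $\hat f=0$, then $WO_\gamma(f)=0$ for every regular semisimple $\gamma$ by the Lemma, and by (a) the nilpotent Shalika-germ coefficients of $\hat f$ at every semisimple $s$ vanish as well. Harish-Chandra's density theorem for regular semisimple orbital integrals in the invariant dual of $\H$ then forces the class $[f]\in H_0(G,\mcA)$ to be zero. For surjectivity in the $PGL(2)$ case, I would decompose $\alpha\in\mcE$ according to the geometry of its support. The regular elliptic contributions are realized by cuspidal functions via \eqref{ell}, which already lie in $\mcA$. The regular hyperbolic contributions are matched by suitable truncations of matrix coefficients of principal series for the unique parabolic $B=TU$ of $PGL(2)$, modified so as to satisfy the weightless condition. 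Finally the germ at the center, which is a combination of Fourier transforms of the two nilpotent orbits in $\mathfrak{sl}_2$, is matched by explicit small-support weightless functions near $e$; the support compactness built into Definition \ref{distributions'}(a) keeps the construction finite.

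The step I expect to be the main obstacle is the descent in (a): the assertion that the weightless condition on $G$ is inherited by the descended distribution $D_s$ on $Z_G(s)$ for every semisimple $s$. For $s=e$ this is tautological and for $s$ central it reduces to the ambient group, but in general one must translate parabolic integrals on $Z_G(s)$ back to parabolic integrals on $G$ and verify the relevant cancellations. For $PGL(2)$ the only nonregular semisimple elements are central, which trivializes this step and allows the conjecture to go through; for general reductive $G$ this ``transitivity of weightlessness under descent'' is the technical heart of the argument and is where I expect genuinely new input to be required.
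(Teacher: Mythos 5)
Your argument for part (c) contains a genuine error. You claim that $\mcA(\bO)=\mcS(\bO)$, on the grounds that if $lU_Q\subset\bO$ with $U_Q\neq\{e\}$ then for $u\neq e$ the element $lu$ would fail to be semisimple. This is false: the decomposition $lu = l\cdot u$ is \emph{not} the Jordan decomposition (since $l$ and $u$ do not commute), and in fact when $l$ is regular semisimple and $u\in U_Q$ the element $lu$ has the same characteristic polynomial as $l$ and is therefore regular semisimple and conjugate to $l$. Concretely, in $PGL(2)$ take $l=\mathrm{diag}(a,1)$ with $a\neq 1$ and $u$ upper-triangular unipotent; then $lu$ is regular semisimple with eigenvalues $a,1$, hence in $\bO$. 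So for split (or more generally non-elliptic) regular semisimple orbits, the defining conditions of $\mcA(\bO)$ are \emph{not} vacuous, and $\mcA(\bO)$ is a proper subspace of $\mcS(\bO)$. This is exactly why the paper does not simply quote uniqueness of Haar measure: Remark \ref{c_ell} observes that this shortcut works \emph{only} for elliptic $\bO$. In the almost-elliptic case the paper establishes the exact complex \eqref{compl_ae}, in which $\mcA_\bO$ appears as the kernel of the nontrivial map $\mcS(G/T)\to\mcS(G/TU)\oplus\mcS(G/TU')$, and then extracts $\dim H_0(G,\mcA_\bO)=1$ from a careful dimension count in the long exact sequence of group homology (using $H_i(G,\Ce)=0$ for $i>0$, etc.); a parallel computation appears as Proposition \ref{regular} for $PGL(2)$.

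Your sketch for part (a) is in the same spirit as the paper's (Remark \ref{Ealt} reduces (a) to Harish-Chandra's Theorem 16.2 in \cite{HC}, which is essentially what you invoke), but your sketch for injectivity in part (b) is too weak: from $\hat f=0$ one knows the distribution vanishes, but Harish-Chandra density of regular orbital integrals in the invariant dual does not by itself produce the expression of $f$ as a finite sum of conjugation-coboundaries inside $\mcA$ that the vanishing of $[f]\in H_0(G,\mcA)$ requires. The paper's injectivity argument (Propositions \ref{inj}, \ref{N'}, \ref{regular}, and Corollaries \ref{extensions}, \ref{cusp}, \ref{split}) is a genuinely homological argument: it reduces to the nilpotent cone via Lemma \ref{extension}, computes $H_0(G,\mcA_u)$ and $H_0(G,\mcA_\bO)$ explicitly, and matches these against the germ space via cuspidal characters. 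You would need a replacement of comparable substance for this step, and the (c) argument you would be relying on is itself incorrect.
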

\begin{remark}
One can check that $\hat{f}$ satisfies the conditions of Remark \ref{Ealt}, thus
if $char(F)=0$ then part (a) of the conjecture follows from Harish-Chandra's Theorem
\cite[Theorem 16.2]{HC}; see 
\cite[Corollary 5.9]{W0}, 
\cite[Proposition 5.6.1]{BP} for details.
\end{remark}
\begin{remark} It is clear that part $c)$ follows from $a)$ and $b)$.
\end{remark}

\begin{remark}\label{c_ell}
If the centralizer of an element $g\in \bO$ is an anisotropic (compact) torus then
statement (c) clearly follows from uniqueness (up to scaling) of a Haar measure on $G$.
In the case when that centralizer has split rank one  the statement is checked in the next subsection.
\end{remark}

\subsection{Almost elliptic orbits} To simplify the wording we assume in this subsection that the center of $G$ is compact.
A regular semisimple element $g\in G$ will be called almost elliptic if the split rank of its centralizer
is at most one. We now prove Conjecture \ref{imb}(c) in the case when $\bO$ consists of almost elliptic
elements. 

Fix $g\in \bO$ and let $T$ be the centralizer of $G$, thus $\bO\cong G/T$. In view of Remark \ref{c_ell} it suffices to consider the case when the split 
 rank of $T$ equals one; we also assume without loss of generality that $G$ is almost simple. 
 
 There are exactly two parabolic subgroups $P,\, P'\subsetneq G$ containing $T$. Let $U$, $U'$
 be their unipotent radicals. 
 
 Consider the complex
 \begin{equation}\label{compl_ae}
 0\to \mcA_\bO\to \mcS(G/T)\to \mcS(G/TU)\oplus \mcS(G/TU')\to \Ce\to 0;
 \end{equation}
 here $\mcS$ stands for the space of locally constant compactly supported measures as before, 
 the third arrow send $\phi$ to $(pr_*(\phi), pr'_*(\phi))$, where $pr:G/T\to G/TU$, $pr':G/T\to G/TU'$
 are the projections and the fourth arrow sends $(\phi, \phi')$ to $\int \phi-\int \phi'$.
 \begin{lem}\label{compl_ae_lem}
 The complex \eqref{compl_ae} is exact. 
 \end{lem}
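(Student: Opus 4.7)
The plan is to verify exactness at each of the four positions in the complex. At the two ends the required statements are trivial: the defining inclusion $\mcA_\bO\hookrightarrow \mcS(G/T)$ is injective, the map $(\phi,\phi')\mapsto \int\phi-\int\phi'$ is surjective (take $\phi'=0$), and the overall composition vanishes since $\int pr_*\psi=\int\psi=\int pr'_*\psi$.

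Exactness at $\mcS(G/T)$ is a definition chase. Identifying $\mcS(G/T)\cong \mcS(\bO)$ via the orbit map $gT\mapsto gg_0g^{-1}$ (with $g_0\in T\cap \bO$ fixed), a change of variables $u\mapsto gug^{-1}$ converts $pr_*\psi(gTU)$ into a nonzero constant times $\int_{U_Q}f(lu)\,du$ for $l=gg_0g^{-1}$ and $Q=gPg^{-1}$, the Jacobian being nonvanishing because $g_0$ acts regularly on $U$. As $g$ varies, the pair $(l,Q)$ sweeps out all $l\in \bO$ and $G$-conjugates $Q$ of $P$ whose Levi contains $l$, and similarly with $P'$ for $pr'_*$. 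Since the split rank of $T$ equals $1$, the proper parabolics of $G$ whose Levi meets $\bO$ are exactly the $G$-conjugates of $P$ and $P'$, so the simultaneous vanishing of both pushforwards is equivalent to $f\in \mcA_\bO$.

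The key step is middle exactness. Given $(\phi,\phi')$ with $\int\phi=\int\phi'=c$, I would construct $\psi\in \mcS(G/T)$ with $pr_*\psi=\phi$ and $pr'_*\psi=\phi'$. The geometric input is that the $G$-equivariant map $q:G/T\to G/TU\times G/TU'$, $gT\mapsto (gTU,gTU')$, is injective: $TU\cap TU'=T$, because $u'u^{-1}\in T\cap U'U=\{e\}$ by uniqueness in the big Bruhat cell $U'MU$. When $T$ equals the Levi $M$ of $P,P'$ (e.g.\ $G=SL(2)$ with $T$ split, where $\mathrm{Im}(q)=\mP^1\times \mP^1\setminus\Delta$), $q$ is an open embedding onto the open $G$-orbit; in general $G/T$ is an $M/T$-bundle over the open orbit $G/M\subset G/P\times G/P'$, through which one lifts the argument.

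To build $\psi$, partition $\mathrm{supp}(\phi)$ into finitely many compact opens $V_i\subset G/TU$ each admitting a partner $W'_i\subset G/TU'$ with $V_i\times W'_i\subset \mathrm{Im}(q)$; analogously partition $\mathrm{supp}(\phi')$ into $V'_j$ with partners $W_j\subset G/TU$; and arrange further that $W_j\times W'_i\subset \mathrm{Im}(q)$ for all $i,j$. Fix probability measures $\mu'_i\in \mcS(W'_i)$ and $\nu_j\in \mcS(W_j)$. A scalar matrix $(A_{ij})$ with row sums $\int \phi|_{V_i}$ and column sums $\int \phi'|_{V'_j}$ exists since both total sums equal $c$, and setting
\[
\psi := \sum_i (\phi|_{V_i})\otimes \mu'_i \;+\; \sum_j \nu_j\otimes (\phi'|_{V'_j}) \;-\; \sum_{i,j} A_{ij}\,\nu_j\otimes \mu'_i
\]
gives $pr_*\psi=\phi$ and $pr'_*\psi=\phi'$ by a direct calculation using the row/column sum identities; each summand lies in $\mcS(\mathrm{Im}(q))\cong \mcS(G/T)$ by the choice of partitions. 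The main obstacle is the combinatorial step of producing compatible partitions and partners, which rests on the openness and density of $\mathrm{Im}(q)$ in $G/TU\times G/TU'$ together with the compactness of $G/P,G/P'$ (projective), permitting refinement of any finite cover by disjoint compact opens. In the $G=SL(2)$ model case this reduces to elementary combinatorics in $\mP^1\times \mP^1\setminus\Delta$.
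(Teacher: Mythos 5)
Your construction is genuinely different from the paper's: the paper proves middle exactness by a one‑paragraph duality argument (a functional $(\psi,\psi')$ vanishing on the image of $\mcS(G/T)$ pulls back to a generalized function on $G$ that is both right $TU$‑ and right $TU'$‑invariant, hence right $G$‑invariant since $U$ and $U'$ generate $G$, hence a multiple of the Haar integral), whereas you try to build an explicit preimage. Unfortunately the explicit construction has a real gap in the generality the Lemma is stated in. Your final paragraph "rests on the openness and density of $\mathrm{Im}(q)$ in $G/TU\times G/TU'$" — but this is false unless the Levi $L$ of $P,P'$ equals $T$. In general (e.g.\ $G=SL(3)$ with $T$ an almost elliptic Cartan, so $L\cong GL(2)$), we have
\[
\dim(G/TU\times G/TU') - \dim(G/T) \;=\; \dim L - \dim T \;>\;0,
\]
so $\mathrm{Im}(q)$ is a locally closed submanifold of positive codimension and contains \emph{no} nonempty product $V\times W'$ of open sets. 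Consequently the terms $(\phi|_{V_i})\otimes\mu'_i$, $\nu_j\otimes(\phi'|_{V'_j})$, $\nu_j\otimes\mu'_i$ in your formula do not lie in $\mcS(\mathrm{Im}(q))\cong\mcS(G/T)$; the formula simply does not produce an element of $\mcS(G/T)$. Equivalently, the fiber $pr^{-1}(y)\cong U$ of $pr$ over a point of $G/TU$ maps under $pr'$ onto $UTU'/TU'$, a proper positive‑codimension subset of $G/TU'$, so one cannot "transport mass" from one fiber to hit arbitrary compact opens $A_i,A_i'$. You acknowledge the bundle structure $G/T\to G/L$ in a parenthetical, but the proposed lift is never carried out, and it contradicts the later appeal to openness of $\mathrm{Im}(q)$.

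For $G=SL(2)$ (or, more generally, whenever $L=T$) your argument is sound and gives a nice explicit construction, essentially a statement about measures on $\mP^1\times\mP^1\setminus\Delta$. But to cover all almost elliptic orbits, the dual argument of the paper — which avoids any direct construction and uses only that $U,U'$ generate $G$ — is both shorter and uniform; you might try to salvage your approach by first solving the analogous problem at the level of $G/L$, $G/P$, $G/P'$ (where the image \emph{is} open) and then arguing that exactness is inherited along the $L/T$‑fibrations, but this requires an additional homological step that you haven't supplied.
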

\proof Exactness at all the terms except for  $\mcS(G/TU)\oplus \mcS(G/TU')$ is clear.
Suppose that $(\psi,\psi'):\mcS(G/TU)\oplus \mcS(G/TU')\to \Ce$ is a linear functional 
vanishing on the image of $\mcS(G/T)$. Let $\tilde\psi:\mcS(G)\to \Ce$ be the composition 
of the direct image map $\mcS(G)\to \mcS(G/TU)$ with $\psi$. Then $\psi$ is a right $TU$ invariant
generalized function on $G$. On the other hand, $-\psi$ is equal to the composition 
of the direct image map $\mcS(G)\to \mcS(G/TU')$ with $\psi'$, which shows that $\tilde \psi$
is also right $TU'$ invariant. Since $G$ is assumed to be almost simple,  $U$ and $U'$ together generate $G$, thus we see that $\tilde \psi$
is right $G$ invariant. It follows that  $\tilde \psi$ is proportional to the functional $\phi\mapsto \int \phi$,
hence the functional $(\psi,\psi')$ factors through the differential in \eqref{compl_ae}, which yields
exactness of \eqref{compl_ae}. \qed

We can now finish the proof of Conjecture  \ref{imb}(c) in the present case. 
 Breaking \eqref{compl_ae} into short exact sequences we get
 $$ 0\to \mcA_\bO\to \mcS(G/T)\to M\to 0,$$
$$ 0\to M\to \mcS(G/TU)\oplus \mcS(G/TU')\to \Ce\to 0.$$
 Considering the corresponding long exact sequences on homology we see that
 it suffices to check that the map $\Ce=H_0(G, \mcS(G/T))\to H_0(G,M)$ is nonzero
 while the map $H_1(G, \mcS(G/T)) \to H_1(G,M)$ has one-dimensional cokernel.
 The former statement is clear since the composition $H_0(G, \mcS(G/T))\to H_0(G,M)\to
 H_0(G,\mcS(G/TU))$ is nonzero. To check the latter recall that $H_i(G,\Ce)=0$ for $i>0$
 since the resolution of $\Ce$ provided by the simplicial complex for computation of homology
 of the Bruhat-Tits building $\fB$ shows that $H_i(G,\Ce)\cong H_i(\fB/G)$, while $\fB/G$ is
 a product of simplices. Thus 
 $$H_1(G,M)\cong H_1(G,  \mcS(G/TU)\oplus \mcS(G/TU'))\cong H_1(T,\Ce)\oplus H_1(T,\Ce).$$
 Since $\mcS(G/TU)\cong H_1(T,\Ce)$ we see that 
 $$CoKer\left( H_1(G, \mcS(G/T)) \to H_1(G,M) \right)\cong H_1(T,\Ce),$$
 which is one-dimensional. \qed
 
\subsection{The case of $PGL(2)$}
To simplify the argument we assume in this subsection that $char(F)=0$. 

%In this section we assume that $char(F)\ne 2$. 

\begin{theorem} %[imbedding]
\label{imbedding}Conjecture \ref{imb} is true for $G=PGL(2)$.

\end{theorem}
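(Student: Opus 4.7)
My plan is to split Conjecture \ref{imb} for $G=PGL(2)$ into its three parts, using that (a) and (c) are essentially already in hand. Part (a) follows from Remark \ref{Ealt}: the distribution $\hat{f}$ satisfies the two conditions listed there for any $f\in\mcA$, and in characteristic zero the remark reduces (a) to Harish-Chandra's theorem \cite[Theorem 16.2]{HC}. Part (c) is an instance of the almost-elliptic case treated in Lemma \ref{compl_ae_lem} above, since every regular semisimple element of $PGL(2)$ has centralizer of split rank at most one.

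The substance of the theorem is part (b). I would prove it using the semisimple stratification of $PGL(2)$: three strata, namely $\{e\}$, split regular orbits, and elliptic regular orbits. The regular locus is handled first. Let $\mcA^{rs}\subset\mcA$ consist of functions supported on $G^{rs}$, and let $\mcE^{rs}\subset\mcE$ be defined analogously. By part (c), for each regular semisimple orbit $\bO$ one has $\dim H_0(G,\mcA(\bO))=1$; combined with Proposition \ref{Waldspurger} and the identity $\hat{f}(g)=WO_g(f)$ from the Lemma preceding the conjecture, $\tau$ matches this line with the line of invariant distributions in $\mcE^{rs}$ supported on $\bO$. Integrating over orbits yields an isomorphism $H_0(G,\mcA^{rs})\iso\mcE^{rs}$.

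Next comes the germ at $e$. Since $\mathfrak{pgl}_2$ has only two nilpotent orbits, $\mcG_e$ is two-dimensional. Given $\alpha\in\mcE$ I would construct $f_e\in\mcA$, supported in a small neighborhood of $e$, so that $\hat{f}_e$ has the same germ at $e$ as $\alpha$; such an $f_e$ exists because the two-dimensional germ space $\mcG_e$ is realized by averages of weightless functions near $e$, verified by explicit Fourier analysis on $\fg$ via the exponential $\phi$, adjusting by a multiple of the characteristic function of a small compact open subgroup to kill orispheric transforms while preserving the germ. Then $\alpha-\hat{f}_e\in\mcE^{rs}$, so the regular-locus step produces a preimage. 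For injectivity, if $\hat{f}=0$ the regular step forces $[f]$ to be represented by a function supported near $e$, and germ matching then forces $[f]=0$.

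The main obstacle is the gluing between these two steps: writing $\mcA_0$ for the subspace of functions supported in a small compact neighborhood of $e$, one must verify that the long exact sequence associated to $0\to\mcA_0\to\mcA\to\mcA/\mcA_0\to 0$ does not produce an unwanted kernel after applying $H_0(G,-)$, equivalently that the connecting map $H_1(G,\mcA/\mcA_0)\to H_0(G,\mcA_0)$ has image exactly matching the constraints coming from $\mcE$. For $PGL(2)$ this reduces to $H_1$ of a single torus (as in the almost-elliptic argument), and the one-dimensionality of the Bruhat--Tits building of $PGL(2)$ keeps the computation explicit and tractable.
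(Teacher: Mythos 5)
Your reduction of parts (a) and (c) to earlier material is fine: (a) does follow from Remark \ref{Ealt} together with Harish-Chandra's theorem, and (c) follows from the almost-elliptic argument via Lemma \ref{compl_ae_lem}, since every regular semisimple element of $PGL(2)$ has centralizer a torus of split rank $\le 1$. The overall shape of your plan for (b) — treat the regular locus orbit-by-orbit, then control the germ at $e$, then glue — is also the right one. But there are two genuine gaps in the execution, and they are exactly where the real work of the paper lies.

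First, your construction of $f_e\in\mcA$ whose average has a prescribed germ in $\mcG_e$ does not work as described. You propose to start from a function produced by ``explicit Fourier analysis on $\fg$'' and then ``adjust by a multiple of the characteristic function of a small compact open subgroup to kill orispheric transforms.'' But $\delta_K$ for a compact open $K$ is nowhere near weightless: its orispheric transform $A_B(\delta_K)$ restricted to $\Delta_{Y_B}$ is nonzero (it acts as the identity on $K$-invariants of any principal series). Adding a multiple of $\delta_K$ therefore cannot repair the weightlessness condition. Producing weightless functions near the identity with prescribed germs is precisely the point where the paper invokes matrix coefficients of irreducible cuspidal representations (which lie in $\mcA_{cusp}\subset\mcA$), the onto-ness of the map $[r]:\mcK_{cusp}\to H_0(G,\mcK_u)$ (Lemma \ref{r_onto}), and the computation $\dim H_0(G,\mcA_u)=2$ (Proposition \ref{N'}) matching the two-dimensionality of the germ space; none of this is replaced by Fourier analysis on the Lie algebra.

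Second, the gluing step. You observe that weightlessness must be localized, and you propose the exact sequence $0\to\mcA_0\to\mcA\to\mcA/\mcA_0\to 0$ with $\mcA_0$ the functions in $\mcA$ supported near $e$, hoping the connecting map is tractable. But weightlessness is a \emph{non-local} condition (an integral over $U_Q$-cosets), so the quotient $\mcA/\mcA_0$ is not in any obvious way identified with weightless functions on the complement of a neighborhood of $e$, and the long exact sequence you propose doesn't give you the decomposition you need at the level of $H_0(G,-)$. What makes the gluing work in the paper is Lemma \ref{extension}, which uses local constancy of the orispheric transform to show that a function whose restriction to the unipotent cone $\bar N$ is in $\mcA_u$ is already weightless on a whole invariant neighborhood $G^\ep$; Corollary \ref{extensions} then lets one modify $f$ within its class in $H_0(G,\mcA)$ to vanish on $\bar N$. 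Without this mechanism (or a substitute) there is no valid passage between the germ-at-$e$ computation and the regular-locus computation, for either injectivity or surjectivity of $\tau$.
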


%We start with a reformulation of some of the definitions in this particular case.

The rest of the subsection is devoted to the proof of the Theorem.

%We normalize the Haar measure $dg$on $G$  
%so that $\int _{K_0}dg=1$. 
%For  $x\in F^\star$ we define 
% $g_x=\begin{pmatrix}x&0\\0&1\end{pmatrix}$ and write 
% $g_n=\begin{pmatrix}t^n&0\\0&1\end{pmatrix}$. 
%We have  $\bL _+=\mZ _{\geq 0}$ and $G_n:=K_0g_nK_0$. We denote
% by  $K_m\subset K_0$ be the congruence subgroup of level $m$. 

%\begin{claim}%[structure]
%\label{structure} Any $g\in G_n, n\geq 1$ can be written in the form
%$g=kg_xk^{-1}, val (x)=n$.
%\end{claim}
%By definition
 % $$\mcA :=\{ f\in \mcS (G)| \int _{u\in U}\frac{f}{dg} (tu)du =0\}$$
 %for any Borel subgroup $B=TU\subset G$ and  $t\in T$. 

%\begin{definition} 
% $\mcA _{cusp}\subset \mcS (G)$  is the subspace spanned by matrix coefficients of cuspidal representations.
%\end{definition}

\begin{claim} We have $\mcA _{cusp}\subset \mcA$.
\end{claim}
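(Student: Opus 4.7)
The plan is to reduce the inclusion to verifying, for any $f\in \H_{cusp}$ and one fixed Borel $B=TU$ of $PGL(2)$, that the constant-term integral $t\mapsto \int_U f(tu)\,du$ vanishes identically on $T$. Every proper parabolic of $PGL(2)$ is a Borel, all Borels are $G$-conjugate, and the weightless condition is manifestly equivariant under $G$-conjugation, so a single Borel suffices.

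To verify this vanishing I would invoke the standard compatibility between the Hecke-algebra action and the Jacquet functor (Casselman's theorem, or equivalently Bernstein's second adjunction). Precisely, for $f\in \H(G)$ the constant term
$$f^{(B)}(t) := \delta_B(t)^{1/2}\int_U f(tu)\,du\ \in\ \H(T)$$
has the property that for every smooth $T$-module $\sigma$, the action of $f$ on $\mathrm{Ind}_B^G(\sigma)$ descends, via Jacquet restriction, to the action of $f^{(B)}$ on $\sigma$. Since $f$ is cuspidal in the paper's sense, it annihilates $\mathrm{Ind}_B^G(\sigma)$ for every $\sigma$, and consequently $f^{(B)}$ annihilates every smooth representation of $T$.

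The last step exploits the abelianness of $T\cong F^\times$: the Hecke algebra $\H(T)$ is commutative and, by Pontryagin duality, its continuous characters separate its elements. Hence if $\varphi\in\H(T)$ acts as zero on every character $\chi$ of $T$, its Fourier transform on $\widehat{T}$ vanishes identically and $\varphi = 0$. Applied to $\varphi=f^{(B)}$ this gives $f^{(B)}\equiv 0$, which is the desired pointwise vanishing, hence $f\in \mcA$.

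The only substantive input is the Casselman step; the remainder is elementary harmonic analysis on $T$. The argument runs cleanly in rank one precisely because the Levi is abelian — in higher rank the analogous machinery (Bernstein--Deligne--Kazhdan) only gives vanishing of the constant term modulo the commutator subspace $[\H(L),\H(L)]$, which would be insufficient for the pointwise weightless condition. This is the reason the statement is proved here inside the $PGL(2)$ subsection rather than being asserted in full generality at this stage of the paper.
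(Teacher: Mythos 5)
Your conclusion is correct, but the key intermediate step is not a theorem and does in fact fail. The assertion that ``the action of $f$ on $\mathrm{Ind}_B^G(\sigma)$ descends, via Jacquet restriction, to the action of $f^{(B)}$ on $\sigma$'' cannot be extracted from Casselman's theory or from second adjointness. Concretely, the counit $\mathrm{Ind}_B^G(\sigma)\to\sigma$, $\phi\mapsto\phi(e)$, does not intertwine $\pi(f)$ with $\sigma(f^{(B)})$: writing out the two sides one finds $\bigl(\pi(f)\phi\bigr)(e)=\int_G f(g)\phi(g)\,dg$ whereas $\sigma(f^{(B)})\phi(e)=\int_{B} f(b)\phi(b)\,db$, and these integrate over different domains. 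Casselman's asymptotics compares matrix coefficients with Jacquet-module matrix coefficients only for strictly dominant $t\in T^+$, not for the convolution action of an arbitrary $f\in\H(G)$; second adjointness is a statement about functors, not a compatibility of $\H(G)$- and $\H(T)$-module structures through the Jacquet map. So step~2, as written, has a genuine gap, and the subsequent Fourier inversion on $T$ is being applied to a conclusion that has not actually been reached.

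The inclusion $\mcA_{cusp}\subset\mcA$ is nevertheless immediate once you use Remark~\ref{diag_rem}. Cuspidality of $f$, in the sense of the paper, is equivalent to the vanishing of the orispheric transform $A_B(f)$, the integral kernel of $\pi(f)$ acting on $\mcS(G/U)$ (this is the operator whose vanishing on all $\mathrm{Hom}_{\H(T)}(\mcS(G/U),\sigma)\cong\mathrm{Ind}_B^G(\sigma)$ is being imposed; projectivity of $\mcS(G/U)$ over $\H(T)$ gives the equivalence). The weightless condition, again by Remark~\ref{diag_rem}, is the vanishing of $A_B(f)|_{\Delta_{Y_B}}$, and the constant term $f^{(B)}(t)$ is precisely the value of that kernel at the diagonal point corresponding to $(eU,\,t^{-1}U)$. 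Thus ``cuspidal implies weightless'' is the tautology that if the kernel vanishes then so does its diagonal. No harmonic analysis on $T$ is required, and---contrary to your closing remark---the argument is rank-independent: the same two characterizations hold for every proper parabolic $P$, since the pointwise condition $\int_{U_P} f(lu)\,du=0$ for all $l\in L$ is exactly $A_P(f)|_{\Delta_{Y_P}}=0$, not merely its image in $\H(L)/[\H(L),\H(L)]$.
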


\begin{prop}%[inj]
\label{inj}The map $\tau : H_0(G,\mcA)\to \mcD ^G$ is an embedding.
\end{prop}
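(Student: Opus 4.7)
The plan is to prove injectivity: suppose $f\in\mcA$ satisfies $\hat f=0$ in $\mcD^G$, and show $[f]=0$ in $H_0(G,\mcA)$. By Proposition \ref{Waldspurger} together with the preceding Lemma, the hypothesis $\hat f=0$ gives $WO_g(f)=0$ for every $g\in G^{rs}$, and moreover forces the germ of $\hat f$ at every non-regular semisimple element to vanish (since $\hat f$ vanishes as a distribution on all of $G$, not only on $G^{rs}$).

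Next, I would exploit that $G=PGL(2)$ has rank one, so every regular semisimple element has centralizer of split rank at most one and is therefore almost elliptic in the sense of the preceding subsection. Combined with Remark \ref{c_ell} for the elliptic orbits, this gives $\dim H_0(G,\mcA_\bO)=1$ for every regular semisimple conjugacy class $\bO\subset G$, with the generator detected by the functional $f\mapsto WO_{g_\bO}(f)$. Thus, orbit by orbit, the vanishing $WO_{g_\bO}(f)=0$ shows that the ``$\bO$-contribution'' of $f$ is a sum of commutators in $\mcA$.

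The third step is globalization. Stratify $G$ by Jordan decomposition into the open regular semisimple locus $G^{rs}$ and its closed complement $G^{sing}$, which for $PGL(2)$ is the union of the identity and the nontrivial unipotent conjugacy class. Write $f$ as a finite sum $f=\sum_i f_i$ with each $f_i\in\mcA$ supported in a small $G$-invariant tube around a single regular semisimple orbit together with the portion of $G^{sing}$ in its closure; apply the orbit-wise result to each $f_i$, and use Harish-Chandra's local germ expansion at each point of $G^{sing}$ (valid since $char(F)=0$) to expand $\hat f$ in Fourier transforms of shifted nilpotent orbital integrals. The assumption $\hat f=0$ forces every germ coefficient to vanish, supplying the compatibility between the orbit-wise pieces.

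The main obstacle will be this gluing step: assembling the orbit-by-orbit commutator representations into a single commutator expression for $f$, controlled consistently by the germ data near the singular locus. This is made tractable for $PGL(2)$ by the rank-one structure: only two conjugacy classes of maximal tori contribute and there are finitely many nilpotent orbits, so the germ spaces $\mcG_s$ are finite-dimensional and explicit, and the matching of orbit-boundary data against germ data reduces to an elementary (though careful) bookkeeping in rank one.
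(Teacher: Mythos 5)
Your strategy is close in spirit to the paper's proof, and the ingredients you assemble (vanishing of $WO_g(f)$ for all regular $g$, vanishing of the germ of $\hat f$ at singular points, and the one-dimensionality of $H_0(G,\mcA_\bO)$ for regular orbits $\bO$) are all genuine inputs. But the argument has a real gap precisely where you flag it, and the ``elementary bookkeeping'' you hope for is actually the substance of the proof. Two points.

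First, your decomposition $f=\sum_i f_i$ with each $f_i\in\mcA$ supported in an invariant tube is not something you can just assert. The space $\mcA$ is cut out by linear conditions ($\int_{U_Q}f(lu)\,du=0$ for all proper parabolics), and truncating $f$ to an invariant neighborhood of a single orbit generally destroys these conditions. The paper addresses this head-on: Lemma~\ref{extension} shows that if $f|_{\bar N}\in\mcA_u$ (i.e.\ the restriction to the closure of the unipotent set already satisfies a weaker weightlessness condition) then $f|_{G^\ep}\in\mcA$ for some $\ep>0$; and Corollary~\ref{extensions} converts this into a procedure for replacing $f$, within its class in $H_0(G,\mcA)$, by a representative $f'$ vanishing on $\bar N$. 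Only then can one split $f'=f_s+f_e$ into split and elliptic pieces inside $\mcA$ and run the orbit-by-orbit argument (Corollary~\ref{split} and its elliptic analogue). So the gluing/truncation is not an incidental detail one can defer --- it is the mechanism that makes the orbit-wise statements applicable.

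Second, even granting the truncation, your appeal to the germ expansion only goes one way. Vanishing of $\hat f$ near $e$ tells you the germ coefficients vanish, but to conclude anything about the class $[f]$ in $H_0(G,\mcA)$ you need the converse: that the germ of $\hat f$ at $e$ \emph{detects} the obstruction to modifying $f$ so that it vanishes near $\bar N$. This is exactly what the paper proves via Proposition~\ref{N'} ($\dim H_0(G,\mcA_u)=2$) and Lemma~\ref{r_onto} (the restriction map $\mcA_{cusp}\to H_0(G,\mcA_u)$ is onto, with characters of two cuspidal representations having independent germs at $e$). These give an injection $H_0(G,\mcA_u)\hookrightarrow\bar\mcD^G$, which is how one passes from $\hat f=0$ to $[\kk(f)]=0$ and hence to the situation where Corollary~\ref{extensions} applies. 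Your sketch names the germ expansion but does not explain why germ data determines the class of $f|_{\bar N}$; without the computation of $H_0(G,\mcA_u)$, the argument does not close.
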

\begin{pf} We need more notation.

\begin{definition}
\begin{itemize}
\item For $\ep \geq 0$ we define 
$G^ \ep =\{ g\in G| |p (g)| \leq \ep\}$, where $p(g)=\frac{tr^2(\ti g)}{det (\ti
 g)}-4$; here $\ti g\in GL(2,F)$ is a representative of $g$.

\item We let $G_s$, $G_e$, $N$, $\bar N$ denote, respectively, the sets
of regular semisimple split, regular semisimple elliptic, regular unipotent
and all unipotent elements. 

\item We set  $$\mcA _u=\mcA (\bar N):=\{ f\in \mcS (\bar N)\  | \int \limits_{u\in U}f 
(u)du =0\ \forall B=TU\subset G\},$$
%\item 
$$\mcA _0:=\{ f\in \mcA _u| f(e)=0\},$$
where $B$ runs over the set of Borel subgroups in $G$.

\item For  $f\in \mcA$ we denote by $\kk (f)\in \mcA _u$ the restriction of $f$ to $\bar N$ and by $[\kk (f)]$ the image of $\kk (f)$ in $H_0(G, \mcA _u)$.

\end{itemize}
\end{definition}
We start with the following  geometric statement.
\begin{lem}%[extension]
\label{extension} Let $f\in \mcS (G)$ be such that $f|_{\bar N}\in \mcA _u$. Then there exists $\ep >0$ such  that $f|_{G^\ep}\in \mcA $. 
\end{lem}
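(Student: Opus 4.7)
The plan is to deduce vanishing of $\int_U f(tu)\,du$ on a small neighborhood of $t=1$ from pointwise vanishing at $t=1$, using local constancy of $f$ together with compactness of the flag variety to achieve uniformity in the Borel.

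Fix a compact open subgroup $K \subset G$ under which $f$ is bi-invariant, and a standard Borel $B_0 = T_0 U_0$. Since $G/B_0 \cong \Pro(F)$ is compact while $K$-orbits on it are open, there are only finitely many; pick representatives $g_1,\dots,g_n \in G$. For each $i$, set
\[
\phi_i(t_0) := \int_{U_0} f(g_i t_0 u_0 g_i^{-1})\,du_0.
\]
This is locally constant on $T_0$, and the hypothesis $f|_{\bar N} \in \mcA_u$ applied to the Borel $g_i B_0 g_i^{-1}$ gives $\phi_i(1) = 0$, so each $\phi_i$ vanishes on some open neighborhood $V_i$ of $1 \in T_0$.

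A direct matrix computation shows $p(\mathrm{diag}(a,1)) = (a-1)^2/a$ on the split torus, so the sublevel sets $\{t_0 \in T_0 : |p(t_0)| \leq \ep\}$ form a basis of neighborhoods of $1$ in $T_0$. For each $i$, choose $\ep_i > 0$ with this sublevel set contained in $V_i$, and set $\ep := \min_i \ep_i$. Now let $B = TU$ be any Borel and $t \in T$ with $tU \subset G^\ep$. A short computation gives $p(tu) = p(t)$ for $u \in U$, so this is equivalent to $|p(t)| \leq \ep$. Writing $B = (kg_i) B_0 (kg_i)^{-1}$ for some $k \in K$ and setting $t_0 := (kg_i)^{-1} t (kg_i) \in T_0$, the conjugation invariance of $p$ yields $|p(t_0)| \leq \ep \leq \ep_i$, hence $t_0 \in V_i$ and $\phi_i(t_0) = 0$. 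A change of variable $u = (kg_i) u_0 (kg_i)^{-1}$ in the Haar integral, combined with $K$-bi-invariance of $f$ (which kills the outer conjugation by $k$), identifies $\int_U f(tu)\,du$ with a nonzero scalar multiple of $\phi_i(t_0)$, hence with $0$. This shows $f|_{G^\ep} \in \mcA$.

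The main obstacle is uniformity of the neighborhoods $V_i$ across the continuous family of Borels in $G/B_0$; the bi-invariance of $f$ under $K$ combined with compactness of the flag variety resolves this by reducing to the finite minimum $\ep = \min_i \ep_i$. The propagation from pointwise vanishing to neighborhood vanishing is then a purely formal consequence of local constancy of $f$, and the remaining bookkeeping (change of variables, conjugation invariance of $p$, the equivalence $tU \subset G^\ep \Leftrightarrow |p(t)| \leq \ep$) is routine.
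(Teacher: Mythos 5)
Your proof is correct and takes essentially the same approach as the paper: both arguments reduce to the observation that the relevant orbital-type integrals (the restrictions of the orispheric transform $A_B(f)$ to fibers $\{gB\}\times T$ of $\Delta_{Y_B}$, which are precisely your $\phi_i$) are locally constant in the torus variable, and both use compactness of $G/B$ — which you make explicit via the finite decomposition into $K$-orbits — to pass from pointwise vanishing at $t=1$ to vanishing on a uniform tube $G/B\times T_\ep$.
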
 
\begin{pf}
%Since $f\in \mcS (G)$ there exist $m,n$ such that 
%$supp (f)\subset G_{\leq n}$ 
%and $f(g)=f(g')$ if $g^{-1}g'\in K_m$. Therefore for any Borel subgroup $B=TU$ there exists
%$\ep _B>0$ such that $f(g_xu)=f(u)$ 
%for any $x,\| x-1\|\leq \ep _B$. Since the space $\mcB$ of Borel subgroups of $G$ is compact there exists 
% $\ep >0$ such that $f(g_xu)=f(u)$ all Borel subgroup $B=TU$ and 
%$x,\| x-1\|\leq \ep $.
%It is clear then that  $f_{|G^\ep}\in \mcA $. 
Recall notations of Remark \ref{diag_rem}. We have $\Delta_{Y_B}=G/B\times T$, where $T$ is the (abstract) Cartan
subgroup of $G$.  It is easy to see that condition 
$f|_{\bar N}\in \mcA _u$ is equivalent to vanishing  of the restriction of $A_B(f)$ to $G/B\times \{1\}\subset \Delta_{Y_B}$.
Also,  condition  $f|_{G^\ep}\in \mcA $ is equivalent to vanishing of $A_B(f)$ on $G/B\times T_\ep\subset \Delta_{Y_B}$,
where $T_\ep=G_\ep\cap T$ (here we abuse notation by identifying the abstract Cartan subgroup $T$ with an arbitrarily 
chosen Cartan subgroup). Since $A_B(f)$ is locally constant for $f\in \H$, the statement follows from compactness of $G/B$.
\end{pf}

\begin{cor}%[extensions]
\label{extensions}  For any $f\in \mcA$ such that 
$[\kk (f)]=0$ there exists $f'\in \mcA$  with the same image in $H_0(G,\mcA)$ and such that $f'|_{\bar N}=0$.
\end{cor}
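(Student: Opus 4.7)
Since $[\kk(f)]=0$ in $H_0(G,\mcA_u)$, we may write
$$\kk(f)=\sum_j\bigl({}^{g_j}h_j-h_j\bigr),\qquad h_j\in \mcA_u,\ g_j\in G,$$
as a finite sum of conjugation boundaries. The plan is to lift each $h_j$ to a function $\tilde h_j\in \mcA(G)$ with $\tilde h_j|_{\bar N}=h_j$, and then set
$$f':=f-\sum_j\bigl({}^{g_j}\tilde h_j-\tilde h_j\bigr)\in \mcA.$$
Since $\bar N$ is conjugation-invariant, $({}^{g_j}\tilde h_j)|_{\bar N}={}^{g_j}h_j$, so $f'|_{\bar N}=\kk(f)-\sum_j({}^{g_j}h_j-h_j)=0$, while $[f']=[f]$ in $H_0(G,\mcA)$ by construction. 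Thus everything reduces to constructing the lifts $\tilde h_j$.

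For the lifting step, given $h\in \mcA_u$, I would first choose any $\bar h\in \mcS(G)$ with $\bar h|_{\bar N}=h$; such a lift exists since $\bar N$ is closed in $G$ and $h$ is compactly supported and locally constant. By Lemma \ref{extension}, there exists $\ep>0$ such that $\bar h|_{G^\ep}\in \mcA(G^\ep)$. Observe that $G^\ep$ is clopen in $G$, because $|p|$ takes values in the discrete set $\{0\}\cup q^\mZ$. I then define $\tilde h:=\chi_{G^\ep}\cdot\bar h\in \mcS(G)$, which manifestly satisfies $\tilde h|_{\bar N}=\bar h|_{\bar N}=h$ since $\bar N\subset G^\ep$ ($p$ vanishes on unipotents).

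The main obstacle is to verify $\tilde h\in \mcA(G)$ and not merely $\tilde h|_{G^\ep}\in \mcA(G^\ep)$. The point specific to $PGL(2)$ is that $p$ is constant along any right coset of the unipotent radical of a Borel: for $B'=T'U'$, $t\in T'$ and $u\in U'$ one has $p(tu)=p(t)$, since $tu$ and $t$ have the same trace and determinant. Because every proper parabolic of $PGL(2)$ is a Borel, it follows that for any $Q=LU_Q$ and any $l\in L$, the coset $lU_Q$ is either contained in $G^\ep$ or disjoint from it. In the first case $\int_{U_Q}\tilde h(lu)\,du=\int_{U_Q}\bar h(lu)\,du=0$ by $\bar h|_{G^\ep}\in \mcA(G^\ep)$, and in the second case the integral vanishes trivially. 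Hence $\tilde h\in \mcA(G)$, completing the lift and the proof. The $U$-invariance of $p$ is exactly what prevents a routine generalization to higher rank groups, where a different mechanism for passing from $\mcA(G^\ep)$ to $\mcA(G)$ would be needed.
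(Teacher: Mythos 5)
Your proof follows the same route as the paper: write $\kk(f)$ as a finite sum of conjugation boundaries of elements of $\mcA_u$, lift each term to $\mcA$ via Lemma \ref{extension}, and subtract the corresponding boundary in $\mcA$ from $f$. The extra detail you supply for the lifting step --- that $G^\ep$ is clopen and conjugation-invariant and that $p$ is constant along the cosets $lU_Q$ so that the cutoff $\chi_{G^\ep}\cdot\bar h$ lands in $\mcA(G)$ rather than merely $\mcA(G^\ep)$ --- correctly spells out what the paper leaves to the reader when it invokes Lemma \ref{extension}.
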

\begin{pf} Since $[f| _{\bar N}]=0$ we can write the restriction 
of $f$ to $\bar N$ as a finite sum $\sum _i (\ti f_i^{g_i}-\ti f_i), \ti f_i \in  \mcA _0', g_i\in G$. As follows from the  Lemma \ref{extension} we can choose  $f_i\in \mcA$ such that $\ti f_i={f_i}|_{\bar N}$. Then the function $f':=f-\sum _i ( f_i^{g_i}-f_i)$
satisfies the conditions of Corollary.
\end{pf}
%\begin{lem} For any $f\in \mcA _u$ the  sequence $f_n:= \int\limits _{g\in G_{\leq n}}{^gf}dg$ of functions on $N$ is
% convergent to a constant $I(f)$. Moreover the functional $f\to I(f)$ on $\mcA _u$ is $G$-invariant.

%\end{lem}
%\begin{pf} The proof is analogous to the proof of Claim \ref{conv}.
%\end{pf}

\begin{prop}%[N']
\label{N'} 
% The $G$-invariant functionals $\delta _e, I:\mcA _u\to \mC$ define an isomorphism
%$\tau _0:H_0(G,\mcA _u)\to \mC ^2$.
The space $H_0(G,\mcA _u)$ is two dimensional.
\end{prop}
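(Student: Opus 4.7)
The plan is to identify $\mcA_u$ as the kernel of a natural $G$-equivariant surjection onto an irreducible principal series, and then compute $H_0(G,\mcA_u)$ from the resulting long exact sequence. Fix a Borel $B_0=T_0 U_0\subset G$ with $U_0\cong F$ parameterized by $a\mapsto v_a$, and let $\Pi:=\mathrm{Ind}_{B_0}^G(\chi)$ be the principal series for the character $\chi$ of $B_0$ trivial on $U_0$ and given by $\chi(t_b)=|b|^{-1}$. Define
\[
I:\mcS(\bar N)\to\Pi,\qquad I(f)(g):=\int_{U_0} f(gvg^{-1})\,dv.
\]
The identity $t_bv_at_b^{-1}=v_{ab}$ gives $I(f)(gt_b)=|b|^{-1}I(f)(g)$, and $I(f)(gu)=I(f)(g)$ for $u\in U_0$ (since $U_0$ is abelian), so $I(f)\in\Pi$; moreover $I$ intertwines conjugation on $\mcS(\bar N)$ with the principal series action. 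By construction, $\ker I=\mcA_u$. For $PGL(2)$ the principal series $\mathrm{Ind}_{B_0}^G(\chi)$ is reducible only at $\chi=1$ or $\chi=\delta_{B_0}=|\cdot|$, so our $\chi=|\cdot|^{-1}$ makes $\Pi$ irreducible; and $I(1_V)(e)=\mathrm{vol}(V\cap U_0)>0$ for any small compact open neighborhood $V$ of $e$ in $\bar N$, so $I$ is nonzero and hence surjective. This yields a short exact sequence $0\to\mcA_u\to\mcS(\bar N)\to\Pi\to 0$.

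I would then compute the homology of the two outer terms. For $\mcS(\bar N)$, the orbit stratification $\bar N=\{e\}\sqcup N$ gives a short exact sequence $0\to\mcS(N)\to\mcS(\bar N)\to\Ce\to 0$, and via $N\cong G/U_0$ we have $\mcS(N)\cong\mathrm{Ind}_{U_0}^G(\Ce)$ (compact induction), so by Shapiro $H_*(G,\mcS(N))=H_*(U_0,\Ce)$. Writing $U_0\cong F$ as the directed colimit of its compact open (profinite) subgroups, each of which has vanishing positive smooth group homology, gives $H_0(U_0,\Ce)=\Ce$ and $H_{>0}(U_0,\Ce)=0$. The action of $G$ on its Bruhat-Tits tree $\fB$ (contractible, with compact open cell-stabilizers and with quotient a single edge whose endpoints lie in distinct $G$-orbits) yields $H_0(G,\Ce)=\Ce$ and $H_{\ge 1}(G,\Ce)=0$. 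The resulting long exact sequence produces $H_0(G,\mcS(\bar N))=\Ce^2$ and $H_1(G,\mcS(\bar N))=0$. For $\Pi$, the compactness of $B_0\backslash G$ makes $\Pi$ a compact induction from $B_0$, so Shapiro plus Hochschild-Serre for $1\to U_0\to B_0\to T_0\to 1$ (using the $U_0$-vanishing above) reduces $H_*(G,\Pi)$ to $H_*(T_0,\chi)$. Writing $T_0=F^*\cong\pi^\mZ\times\mcO^*$ with $\chi|_{\mcO^*}=1$ and $\chi(\pi)=q\neq 1$ gives $H_*(T_0,\chi)=H_*(\mZ,\Ce)=0$, where $\mZ$ acts on $\Ce$ by multiplication by $q$ (so $q-1$ is invertible).

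Plugging these computations into the long exact sequence for $0\to\mcA_u\to\mcS(\bar N)\to\Pi\to 0$ yields
\[
0=H_1(G,\Pi)\to H_0(G,\mcA_u)\to H_0(G,\mcS(\bar N))=\Ce^2\to H_0(G,\Pi)=0,
\]
whence $\dim H_0(G,\mcA_u)=2$. The main technical obstacle I anticipate is carefully establishing the irreducibility of $\Pi$ and the total vanishing $H_*(G,\Pi)=0$ at the specific character $\chi=|\cdot|^{-1}$: these require bookkeeping with normalized vs.\ unnormalized induction conventions and a direct computation of the smooth group homology of $F$ and $F^*$ with the relevant twisted coefficients.
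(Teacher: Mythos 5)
Your proof is correct and takes a genuinely different route from the paper's. The paper first restricts to $\mcA_0=\{f\in\mcA_u: f(e)=0\}$ and uses the exact sequence $0\to\mcA_0\to\mcS(N)\to\mcS(\mcB)\to 0$, so that the answer arrives through a nonzero boundary map $H_1(G,\mcS(\mcB))\to H_0(G,\mcA_0)$ (with $\dim H_1(G,\mcS(\mcB))=\dim H_1(T,\Ce)=1$), and then tacks on the extra dimension via $0\to\mcA_0\to\mcA_u\to\Ce\to 0$. You instead identify $\mcA_u$ at one stroke as the kernel of the full orbital-integral map $I:\mcS(\bar N)\to\Pi$ onto a principal series which is \emph{irreducible} and has $H_*(G,\Pi)=0$, so the whole of $H_0(G,\mcA_u)$ is read off as $H_0(G,\mcS(\bar N))$. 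What each route buys: the paper's version keeps the geometric quotient $\mcS(\mcB)$ (measures on the flag variety), which is familiar but has nonvanishing $G$-homology in degrees $0$ and $1$, so the bookkeeping is spread over several terms of the long exact sequence; your version shifts the weight onto identifying the correct character $\chi=\delta_{B_0}^{-1}$, after which the quotient is homologically invisible and the count collapses to a single Shapiro/Lyndon--Hochschild--Serre calculation. Both arguments rely on the same two inputs, $H_{>0}(U_0,\Ce)=0$ and $H_{>0}(G,\Ce)=0$.

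One small caveat worth flagging, although it does not affect the conclusion: for a non-open closed subgroup such as $B_0$, the smooth-homology Shapiro isomorphism $H_*(G,\mathrm{ind}_{B_0}^G V)\cong H_*(B_0,\,?)$ carries a twist by the modular character $\delta_{B_0}$ (this is exactly why $\mcS(\mcB)$, which is $\mathrm{ind}_{B_0}^G$ of a twist of $\delta_{B_0}$, has the homology of the \emph{trivial} $B_0$-module). Tracing conventions, your reduction should land on $H_*(T_0,\chi\delta_{B_0}^{\mp1})$, i.e.\ on the character $|\cdot|^{\pm 2}$, rather than on $H_*(T_0,\chi)=H_*(T_0,|\cdot|^{-1})$. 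Since $q\neq1$ and $q^{2}\neq1$, the conclusion $H_*(G,\Pi)=0$ is unchanged; but if you had instead ended up at $\chi\delta_{B_0}=1$, the homology would \emph{not} vanish, so the sign here is not merely decorative. It is worth either keeping track of the twist explicitly, or replacing the Shapiro computation by the remark that an irreducible admissible $\pi$ has $H_*(G,\pi)\neq0$ only for $\pi$ trivial or Steinberg, which your $\Pi$ manifestly is not.
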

\begin{pf} 

We first show that $\dim (H_0(G,\mcA _0))=1$. 

Let $\mcB$ be the  variety of Borel subgroups, $p:N\to \mcB$ the map which associates to $u\in N$ the Borel subgroup containing $u$. By definition we have an exact sequence 
$$0\to \mcA _0\to \mcS (N)\to \mcS (\mcB )\to 0$$
and therefore an exact sequence 
$$H_1(G, \mcS (N) )\to H_1(G, \mcS (\mcB) )\to H_0 (G, \mcA _0)\to H_0(G ,\mcS (N))\to   H_0 (G ,\mcS (\mcB )).$$

\begin{lem} $H_1(G, \mcS (N) )=0 $.
\end{lem}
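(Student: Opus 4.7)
The plan is to realize $\mcS(N)$ as a compactly induced representation and apply Shapiro's lemma. Since $G=PGL(2,F)$ acts transitively on regular unipotent elements, $N\cong G/C$ where $C=C_G(u_0)$ is the centralizer of a fixed $u_0\in N$. A direct matrix calculation shows that $C$ coincides with the unipotent radical $U$ of the Borel subgroup containing $u_0$; in particular $C\cong (F,+)$. Both $G$ and $U$ are unimodular, so $G/U$ carries a $G$-invariant measure; fixing one identifies $\mcS(N)$ with $c\text{-ind}_U^G(\mathbb{C})$ as smooth $G$-modules.

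Shapiro's lemma in the smooth setting then yields
$$H_*(G,\mcS(N))\cong H_*(U,\mathbb{C}),$$
reducing the problem to showing $H_1(U,\mathbb{C})=0$ for $U\cong(F,+)$.

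For this final step I would exploit that $U$ is the filtered union of its compact open subgroups $U_n:=\pi^{-n}\mcO$. For each compact $U_n$, Haar averaging produces $U_n$-equivariant splittings, so smooth $U_n$-modules form a semisimple category; in particular the trivial module $\mathbb{C}$ is projective and $H_i(U_n,\mathbb{C})=0$ for $i>0$. Since smooth group homology is $\mathrm{Tor}$ over the Hecke algebra, $\H(U)=\bigcup_n \H(U_n)$, and $\mathrm{Tor}$ commutes with filtered colimits of rings and modules, one obtains
$$H_i(U,\mathbb{C})=\varinjlim_n H_i(U_n,\mathbb{C})=0,\quad i>0.$$

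The main point requiring care is the colimit interchange in the final step, though this is a routine application of standard facts about smooth representations of locally profinite groups; alternatively one could construct an explicit resolution of $\mathbb{C}$ as a smooth $U$-module by direct summands of modules induced from compact open subgroups, for which vanishing of higher smooth group homology of $U$ is immediate. \qed
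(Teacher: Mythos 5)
Your proof is correct, and it takes a slightly different (and arguably cleaner) route than the paper's. Both arguments start from the same geometric observation — $G=PGL(2,F)$ acts transitively on regular unipotents with stabilizer $U$, so $\mcS(N)=\mcS(G/U)=c\text{-ind}_U^G\mathbb{C}$, and $U=\bigcup_n U_n$ is a filtered union of compact open subgroups — and both reduce to vanishing coming from compactness of the $U_n$. The difference is where the reduction happens. The paper stays on the $G$-module side: it writes $\mcS(G/U)=\varinjlim_n \mcS(G/U_n)$, commutes $H_1(G,-)$ past the colimit, and then observes that $\mcS(G/U_n)=\mcS(G)^{U_n}$ is (via the Haar-averaging idempotent) a $G$-module direct summand of $\mcS(G)$, which is $H_1$-acyclic. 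You instead apply Shapiro's lemma once to move to $H_*(U,\mathbb{C})$ and run the colimit over the subgroups $U_n\subset U$, where compactness directly kills higher homology. These are equivalent up to where Shapiro/Frobenius reciprocity is invoked: the paper's "direct summand of $\mcS(G)$" step is precisely what encodes projectivity of $c\text{-ind}_{U_n}^G\mathbb{C}$, which is the content you extract through Shapiro. Your version makes the mechanism more transparent and generalizes verbatim to any group that is a union of compact open subgroups; the paper's version avoids invoking Shapiro explicitly and fits the ambient computation, where the authors are already manipulating the $G$-modules $\mcS(G/H)$ directly. One small hygiene point: the identification $C_G(u_0)=U$ and the transitivity of $G$ on $N$ are specific to $PGL(2)$ (in $SL(2)$ the centralizer is $\{\pm1\}\cdot U$ and there are several unipotent orbits), but since this subsection is explicitly restricted to $G=PGL(2)$, that is fine.
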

\begin{pf} Fix a Borel subgroup $B=TU$. 
We can write $U$ as a union of open compact subgroup 
$U_1\subset ...U_n\subset ...$. Therefore $\mcS (N)=\mcS (G/U)$
 is the direct limit of $\mcS (G/U_n)$. Since the functor $M\mapsto H_1(G,M)$
 commutes with  direct limits it is sufficient to show that  
 $H_1(G, \mcS (G/U_n) )=\{ 0\}$.  Since $U_n\subset G$ is a
 compact subgroup the space $H_1(G, \mcS (G/U_n) )$ is a direct
 summand of $\mcS (G)$. Since  $H_1(G, \mcS (G) )=0$ the
 Lemma is proven.
\end{pf}
Since $G$ acts transitively on $N$ and on $\mcB$ we have 
$H_0 (G ,\mcS (N))\iso H_0 (G ,\mcS (\mcB ))=\mC.$ 
Since $H_1(G, \mcS (N) )=0 $ we see that the map 
 $H_1(G, \mcS (\mcB) )\to H_0 (G, \mcA _0)$ is an isomorphism.
Since $\mcB =G/B$ we have:  
$$H_1(G, \mcS (\mcB) )=H_1(B,\mC)=H_1(T,\mC)=\mC.$$

 So $\dim( H_0 (G, \mcA _0))=1$. 
 
 To conclude the argument, recall the 
 short exact sequence $0\to \mcK _0\to \mcK _u\to \mC \to 0$.

Since $H_1(G,\mC)=0$ we have  an exact sequence: 

$$0\to H_0(G,\mcK _0) \to H_0(G,\mcK _u)\to \mC \to 0$$
So $\dim(H_0(G,\mcK _u)=2$. 
\end{pf}

Let $r:\mcK _{cusp}\to \mcK _u$ be the restriction and $[r]:\mcK _{cusp}\to  H_0(G,\mcK _u)$ be the composition of $r$ and  projection $\mcK _u\to H_0(G,\mcK _u) $.
\begin{lem}\label{r_onto}
The map $[r]$ is onto.
\end{lem}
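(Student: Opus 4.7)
The plan is to exhibit two cuspidal weightless functions whose restrictions to $\bar N$ span $H_0(G, \mcK_u)$, which is two-dimensional by Proposition \ref{N'}. From the proof of Proposition \ref{N'}, the short exact sequence
$$ 0 \to H_0(G, \mcK_0) \to H_0(G, \mcK_u) \to \mC \to 0, $$
induced by $f \mapsto f(e)$, identifies the $G$-invariant functionals on $\mcK_u$ that generate the dual of $H_0(G,\mcK_u)$ as the evaluation $\alpha_1: f \mapsto f(e)$ together with a residual functional $\alpha_2$ spanning the dual of the summand $H_0(G, \mcK_0) \cong \mC$.

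For the $\alpha_1$-coordinate I would take $f_1$ to be a matrix coefficient $m_\rho$ of an irreducible supercuspidal representation $\rho$ of $G = PGL(2, F)$, chosen so that $m_\rho(e) \ne 0$: such $m_\rho$ lies in $\mcK_{cusp}$ since supercuspidal matrix coefficients act by zero on every parabolically induced representation, so $\alpha_1([r(f_1)]) \ne 0$. For the $\alpha_2$-coordinate, given a second supercuspidal $\rho'$, I would form
$$ f_2 = m_{\rho'}(e)\cdot m_\rho \;-\; m_\rho(e)\cdot m_{\rho'} \;\in\; \mcK_{cusp}; $$
then $f_2(e) = 0$, so $[r(f_2)]$ lies in the image of $H_0(G,\mcK_0) \hookrightarrow H_0(G,\mcK_u)$, and it remains to verify that $\alpha_2([r(f_2)]) \ne 0$ for a suitable choice of $\rho'$.

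The main obstacle is to identify $\alpha_2$ concretely and show that $\rho \mapsto \alpha_2(r(m_\rho))/m_\rho(e)$ is non-constant on the supercuspidal spectrum. Unwinding the proof of Proposition \ref{N'}, $H_0(G, \mcK_0)$ is identified with $H_1(T, \mC)$ via the connecting map from the short exact sequence $0 \to \mcK_0 \to \mcS(N) \to \mcS(\mcB) \to 0$, and dually $\alpha_2$ is detected by the class in $H^1(T, \mC) = \mathrm{Hom}(T, \mC)$ given by the valuation $\mathrm{val}: T = F^\times \to \mathbb{Z}$. Tracing through the identifications, $\alpha_2(r(m_\rho))$ matches, up to normalization, the coefficient $c_N(\rho)$ of the regular nilpotent orbital integral in the Harish-Chandra germ expansion of $\chi_\rho$ near $e$. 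Since every irreducible supercuspidal of $PGL(2, F)$ is generic, $c_N(\rho) \ne 0$ by Rodier's theorem, and the ratio $c_N(\rho)/m_\rho(e)$ is readily seen to be non-constant on the supercuspidal spectrum (for instance, by comparing supercuspidals of different depths, or explicit depth-zero supercuspidals attached to distinct characters of an unramified elliptic torus). Choosing $\rho, \rho'$ with distinct ratios then gives $\alpha_2([r(f_2)]) \ne 0$, and the classes $[r(f_1)], [r(f_2)]$ span $H_0(G, \mcK_u)$.
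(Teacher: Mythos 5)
Your plan is in the same spirit as the paper's proof, and it ultimately rests on the same non‑trivial input; but it introduces an extra identification that you assert rather than prove, and the paper's route is cleaner precisely because it avoids naming the two coordinates explicitly. The paper considers the composite $\bar\tau\colon H_0(G,\mcK)\to\bar{\mcD}^G$ sending $[f]$ to the germ of $\hat f$ at $e$, uses Corollary \ref{extensions} to show that $\bar\tau$ kills the kernel of $H_0(G,\mcK)\to H_0(G,\mcK_u)$ (so $\bar\tau$ factors through the two‑dimensional space $H_0(G,\mcK_u)$), and then reduces surjectivity of $[r]$ to the statement that two supercuspidal characters of $PGL(2)$ have non‑proportional germs at $e$, citing Silberger's tables. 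Since $\hat{m}_\rho$ is (a multiple of) $\chi_\rho$, one never needs to say which functional is ``evaluation at $e$'' and which is ``the nilpotent germ coefficient.''

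Your proposal instead picks explicit coordinates $\alpha_1\colon f\mapsto f(e)$ and a lift $\alpha_2$ of a generator of $H_0(G,\mcK_0)^*$, and then claims that $\alpha_2(r(m_\rho))$ is proportional to the Shalika germ coefficient $c_N(\rho)$. That identification is the crux, and it is not established: it is exactly the step that requires Lemma \ref{extension}/Corollary \ref{extensions} to pass from the $\mcK_u$‑data to the germ of $\hat f$ near $e$, followed by unravelling the snake‑lemma identification $H_0(G,\mcK_0)\cong H_1(T,\mC)$ against the Harish--Chandra germ expansion. You write ``tracing through the identifications'' but do not carry this out, and it is not a formality. (Note also $\alpha_2$ is only well defined modulo $\alpha_1$, which happens to be harmless for your $f_2$ but should be flagged.) Once that is done, the final step -- non‑constancy of $c_N(\rho)/m_\rho(e)$ over supercuspidals -- is exactly the statement that cuspidal character germs at $e$ are not all proportional, i.e.\ the same input the paper takes from the character tables. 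Rodier's theorem gives $c_N(\rho)\neq 0$ but says nothing about the ratio being non‑constant, so it does not save you that step. In short: correct in outline, same essential input, but with a genuine unproved sub‑claim (the $\alpha_2\leftrightarrow c_N$ identification) that the paper's formulation via $\bar\tau$ renders unnecessary.
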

\begin{proof} 
Recall the map 
$\tau: H_0(G,\mcK ) \to \mcD ^G$, $[f]\to \hat f$. Let $\bar \mcD ^G$ be the space of germs of invariant distributions at $e$ and 
$\bar{\tau}: H_0(G,\mcK ) \to \bar \mcD ^G$ be the composition of $\tau$ with the restriction map. 

Corollary \ref{extensions} implies that the map $\bar{\tau}$ vanishes on the kernel of the map $H_0(G,\mcK )\to
H_0(G,\mcK_u )$. Thus it suffices to show that $\bar{\tau}|_{\mcK _{cusp}}$ has rank at least two, i.e. that there
exist irreducible cuspidal representations $\rho_1$, $\rho_2$, such that their characters restricted to any
$G$-invariant open neighborhood of identity are not proportional. This is easily done by inspecting the 
character tables, see e.g. \cite[\S 2.6]{Sil}. 
\end{proof}

 \begin{cor}%[cusp]
\label{cusp}

a) For any $f\in \mcA$ there exists $f_{cusp} \in \mcA _{cusp}$ such that $[\kk (f)]=[\kk (f_{cusp'})]$.

b) For any $f_0\in \mcA _u$ there exists $f\in \mcA _{cusp}$ such that $[f_0]=[\kk (f)]$. \qed

\end{cor}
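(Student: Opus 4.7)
The plan is to derive both statements directly from Lemma \ref{r_onto}, which already says that the composition $[r]:\mcA_{cusp}\to H_0(G,\mcK_u)$ is surjective. So very little work remains; the substance of the corollary has been done, and what is left is essentially unpacking of notation.

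For part (b), I would argue as follows. Let $f_0\in \mcA_u$ and consider its class $[f_0]\in H_0(G,\mcA_u)$. By Lemma \ref{r_onto} there exists $f\in \mcA_{cusp}$ with $[r](f)=[f_0]$. Since by definition $[r](f)=[\kappa(f)]\in H_0(G,\mcA_u)$, we obtain the required equality $[f_0]=[\kappa(f)]$ in $H_0(G,\mcA_u)$.

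For part (a), I would reduce it to part (b). Given $f\in \mcA$, the restriction $\kappa(f)$ lies in $\mcA_u$ by definition, so one may apply (b) with $f_0=\kappa(f)$. This produces $f_{cusp}\in \mcA_{cusp}$ with $[\kappa(f_{cusp})]=[\kappa(f)]$ in $H_0(G,\mcA_u)$, which is precisely what is asserted (reading the $f_{cusp'}$ of the statement as a typo for $f_{cusp}$).

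There is no real obstacle here; the only thing worth double-checking is that $\kappa$ indeed sends $\mcA$ into $\mcA_u$, i.e.\ that the restriction to $\bar{N}$ of a weightless function on $G$ satisfies the vanishing condition on integrals along unipotent radicals of Borels — but this is immediate from the definition, since for a Borel $B=TU\subset G$ with $l=e\in T$ one has $eU\subset \bar N$, so the defining identity $\int_{u\in U}f(eu)du=0$ is one of the constraints imposed on $f\in \mcA$. With that remark in place both parts are direct consequences of Lemma \ref{r_onto} and the proof is complete.
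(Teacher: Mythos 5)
Your proposal is correct and follows exactly the route the paper intends: the corollary is stated with a bare \qed precisely because both parts are immediate from Lemma \ref{r_onto}, via the identification $[r](f)=[\kappa(f)]$ and, for part (a), the observation that $\kappa$ maps $\mcA$ into $\mcA_u$. Your sanity check that restriction to $\bar N$ preserves the vanishing condition is a nice touch but, as you say, immediate from the definitions.
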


\bigskip

Let   $s\in G$ be  a regular split semisimple element and $\bO \subset G$ be the conjugacy class of $s$.

\begin{prop}%[regular]
\label{regular} $\dim  (H_0(G, \mcA _\bO))=1$. 
\end{prop}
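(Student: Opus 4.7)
\medskip

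\noindent\textbf{Proof plan.} The plan is to observe that this proposition is, in essence, the almost elliptic case of Conjecture \ref{imb}(c) already treated in the previous subsection. Indeed, since $G=PGL(2)$ is almost simple and the centralizer $T$ of $s$ is a split torus of dimension one (so of split rank one), the orbit $\bO \cong G/T$ is almost elliptic. The exact complex of Lemma \ref{compl_ae_lem} and the subsequent homological argument apply verbatim with $P=B=TU$ and $P'=B'=TU'$, the two Borel subgroups containing $T$.

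For concreteness, I would write the proof by spelling out the computation in this setting. Starting from the exact sequence
$$0 \to \mcA_\bO \to \mcS(G/T) \to \mcS(G/B) \oplus \mcS(G/B') \to \Ce \to 0,$$
I would split it into two short exact sequences (introducing the image $M$) and take the long exact sequences in $G$-homology. The three ingredients are: (i) Shapiro's lemma $H_*(G,\mcS(G/H)) \cong H_*(H,\Ce)$ for $H$ closed in $G$; (ii) the vanishing $H_*(U,\Ce)=\Ce$ concentrated in degree zero (since the unipotent radical $U \cong F$ is a union of open compact subgroups, all of which have trivial higher $\Ce$-homology), so that $H_*(B,\Ce)=H_*(T,\Ce)$ and similarly for $B'$; and (iii) the vanishing $H_i(G,\Ce)=0$ for $i>0$, following from the contractibility of $\fB/G$ where $\fB$ is the Bruhat--Tits tree. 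After these identifications, chasing the long exact sequences shows that
$$H_0(G,\mcA_\bO) \cong \operatorname{coker}\bigl( H_1(T,\Ce) \xrightarrow{\Delta} H_1(T,\Ce) \oplus H_1(T,\Ce) \bigr),$$
where the map is induced by the two inclusions $T \hookrightarrow B$ and $T \hookrightarrow B'$. Since $T \cong F^\times \cong \mcO^\times \times \mZ$ and $H_1(\mcO^\times,\Ce)=0$, we have $H_1(T,\Ce) \cong H_1(\mZ,\Ce) \cong \Ce$, and the cokernel of the diagonal embedding $\Ce \hookrightarrow \Ce \oplus \Ce$ is one-dimensional.

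The main technical point that I would verify carefully is that both maps $H_1(G,\mcS(G/T)) \to H_1(G,\mcS(G/B))$ and $H_1(G,\mcS(G/T)) \to H_1(G,\mcS(G/B'))$ really correspond to the identity on $H_1(T,\Ce)$ under the Shapiro identifications (so that the combined map is the diagonal and not, say, the zero map, which would give the wrong dimension). This follows from the fact that the composites $T \hookrightarrow B \twoheadrightarrow T$ and $T \hookrightarrow B' \twoheadrightarrow T$ are both the identity, via the retractions with unipotent kernels $U$ and $U'$; once this bookkeeping is done, the dimension count is immediate. The outcome is also consistent with (and could be independently confirmed by) the fact, established in the next stages of the paper, that Arthur's weighted orbital integral $WO_g$ for $g \in \bO$ defines a nonzero invariant linear functional on $\mcA_\bO$ whose $G$-averaging recovers the character value at $g$.
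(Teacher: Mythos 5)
Your proposal is correct and takes essentially the same route as the paper: the paper also derives Lemma \ref{SL2ex} as a special case of Lemma \ref{compl_ae_lem}, splits the four-term exact sequence into two short exact sequences via an intermediate module, and chases the long exact sequences in $G$-homology, using Shapiro, $H_{>0}(U,\Ce)=0$, $H_{>0}(G,\Ce)=0$, and the key nonvanishing of the map $H_1(G,\mcS(\bO))\to H_1(G,\mcS(G/B))$ (the paper phrases your ``diagonal is not the zero map'' point as the injectivity of the map $a$ into $H_1(G,L)$). The only step you have left implicit is the injectivity of $H_0(G,\mcS(G/T))\to H_0(G,M)$, needed to identify $H_0(G,\mcA_\bO)$ with the cokernel; this is checked in the almost-elliptic argument you invoke (via the nonvanishing of the composite to $H_0(G,\mcS(G/B))$), so there is no gap.
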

\begin{pf}

Let  $T=Z_G(t)$ be the split torus and 
$B,B'\subset G$  be Borel subgroups containing $T$. Since $\bO =G/T$ we have maps $r:\bO \to G/B$ and 
 $r':\bO \to G/B'$ and therefore morphisms 
 $r_\star :\mcS (\bO )\to \mcS (G/B)$ and
 $r'_\star :\mcS (\bO )\to \mcS (G/B')$.

As a special case of Lemma \ref{compl_ae_lem} we get:

\begin{lem} \label{SL2ex} The sequence 
\begin{equation}\label{star}
 0\to \mcA _\bO\to \mcS (\bO) \to \mcS (G/B) \oplus  \mcS (G/B') \to \mC \to 0,
 \end{equation}
where the last map $l$ is given by $(\nu ,\nu ')\mapsto \int \nu -\int \nu '$, is exact.  \qed
\end{lem}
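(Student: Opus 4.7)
The plan is to deduce Lemma~\ref{SL2ex} directly from Lemma~\ref{compl_ae_lem} by matching data; the statement is flagged as a special case in the paper, and no independent argument should be needed beyond checking that the hypotheses and notation align.

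First, I would verify that the present setup is an instance of the one in Lemma~\ref{compl_ae_lem}. For $G=PGL(2)$ the centralizer $T=Z_G(s)$ of a regular split semisimple $s$ is a split maximal torus, whose split rank equals the semisimple rank of $G$, namely $1$. Hence $s$ is almost elliptic in the sense of the preceding subsection, and $G$ is almost simple, so we are precisely in the scope of that lemma.

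Next, I would match the notation. The only parabolic subgroups of $G$ containing $T$ are the two Borels $B=TU$ and $B'=TU'$, with $U,U'$ opposite unipotent radicals, so the quotients $G/TU$ and $G/TU'$ appearing in \eqref{compl_ae} coincide with $G/B$ and $G/B'$. Under the identification $\bO\cong G/T$, the projections $r:\bO\to G/B$ and $r':\bO\to G/B'$ are the natural quotient maps $G/T\to G/TU$ and $G/T\to G/TU'$, and hence the induced pushforwards $r_\star$, $r'_\star$ are exactly the second arrow in \eqref{compl_ae}. The final map $(\nu,\nu')\mapsto \int\nu-\int\nu'$ is literally the same in both sequences, and the leftmost term $\mcA_\bO$ agrees by definition. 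Thus \eqref{star} is literally an instance of \eqref{compl_ae}, and exactness is inherited from Lemma~\ref{compl_ae_lem}.

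I do not anticipate a real obstacle: the only substantive input to the proof of Lemma~\ref{compl_ae_lem} was that the two opposite unipotent radicals $U$ and $U'$ generate $G$, which for $PGL(2)$ is immediate from, e.g., the Bruhat decomposition. If one preferred a self-contained proof, one would reprove the four-term exactness directly, with the nontrivial point being that a generalized function on $G$ which is right invariant under both $TU$ and $TU'$ must be a scalar multiple of Haar measure (and therefore the kernel of $\mcS(G/B)\oplus\mcS(G/B')\to\mC$ lies in the image from $\mcS(\bO)$); exactness at the other terms is formal.
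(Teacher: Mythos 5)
Your proposal is correct and takes essentially the same approach as the paper, which introduces Lemma~\ref{SL2ex} with the words ``As a special case of Lemma~\ref{compl_ae_lem} we get:'' and supplies no further argument. Your explicit verification that the hypotheses match (the centralizer of a regular split semisimple element in $PGL(2)$ is a split torus of split rank one, $PGL(2)$ is almost simple, and the two parabolics containing $T$ are precisely the Borels $B=TU$ and $B'=TU'$) is exactly the bookkeeping the paper leaves implicit.
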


Let $L:=ker (l)$. We have  an exact sequence
$$ 0 \to L \to \mcS (G/B) \oplus  \mcS (G/B') \to \mC \to 0.$$
%and therefore an exact sequence
%$$H_2(G,\mC )\to H_1(G,L)\to H_1(G, \mcS (G/B))\oplus H_1(G, \mcS (G/B'))\to H_1(G, \mcS (G/T))$$

Using that $G$ % acts properly  on the Bruhat-Tits tree it is of
has homological dimension one, we get that the corresponding long exact sequence of homology contains the following fragment:
%. Therefore we obtain  an exact sequence
$$ 0   \to H_1(G,L)\to H_1(B, \mC )\oplus  H_1(B', \mC )\to  H_1(G, \mC ).$$
It is easy to  see that 
 $$\dim(H_1(B, \mC ) )=\dim (H_1(B',\mC))=\dim (H_1(T,\mC))=1$$
 Since the quotient $G/[G,G]$ is finite we see that $H_1(G, \mC )=0$. Therefore $\dim(H_1(G,L))=2$.

On the other hand we have  an exact sequence
$$0 \to  \mcA _\bO \to \mcS (\bO) \to L \to  0$$ 
and therefore  an exact sequence
$$H_1(G, \mcS  (\bO ))\to H_1(G, L )\to 
H_0 (G,  \mcA _\bO )\to H_0(G ,\mcS (\bO)).$$
Since $H_1(G, \mcS  (\bO ))=H_1(T, \mC )$ we see
 $\dim(H_1(G, \mcS (\bO))=1$.

\begin{lem}
The map $a:H_1(G, \mcS  (\bO ))\to H_1(G, L )$ is an embedding.
\end{lem}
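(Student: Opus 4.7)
The plan is to establish injectivity of $a$ by postcomposing with the natural map $H_1(G,L)\to H_1(G,\mcS(G/B))\oplus H_1(G,\mcS(G/B'))$ coming from the inclusion $L\hookrightarrow \mcS(G/B)\oplus \mcS(G/B')$, and showing that the resulting composite is already injective.

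First I would verify that this postcomposition is itself injective. From the short exact sequence
$$0\to L\to \mcS(G/B)\oplus \mcS(G/B')\to \mC\to 0$$
the relevant fragment of the long exact sequence reads
$$H_2(G,\mC)\to H_1(G,L)\to H_1(G,\mcS(G/B))\oplus H_1(G,\mcS(G/B'))\to H_1(G,\mC).$$
The Bruhat--Tits computation $H_i(G,\mC)\cong H_i(\fB/G,\mC)$, with $\fB$ the tree of $PGL(2)$ and $\fB/G$ an interval, yields $H_i(G,\mC)=0$ for $i\geq 1$. Hence $H_1(G,L)\to H_1(G,\mcS(G/B))\oplus H_1(G,\mcS(G/B'))$ is an isomorphism.

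Next I would identify the composite
$$H_1(G,\mcS(\bO))\xrightarrow{a} H_1(G,L)\hookrightarrow H_1(G,\mcS(G/B))\oplus H_1(G,\mcS(G/B'))$$
as the map $((r_\star)_\ast,(r'_\star)_\ast)$ induced on homology by the pushforward $\mcS(G/T)\to \mcS(G/B)\oplus \mcS(G/B')$. By Shapiro's lemma, $H_\ast(G,\mcS(G/H))=H_\ast(H,\mC)$, and the pushforward along $G/T\to G/B$ (for $T\subset B=TU$) corresponds to the inflation $H_\ast(T,\mC)\to H_\ast(B,\mC)$ induced by inclusion. Since $U$ is a countable union of compact open pro-$p$ subgroups, its smooth $\mC$-homology vanishes in positive degree, so the quotient $B\to B/U=T$ gives $H_\ast(B,\mC)\xrightarrow{\sim} H_\ast(T,\mC)$; composing with $T\hookrightarrow B\to T = \mathrm{id}_T$ then forces the inflation map itself to be an isomorphism. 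The same applies to $B'$.

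Consequently each of the two components $H_1(G,\mcS(\bO))\to H_1(G,\mcS(G/B))$ and $H_1(G,\mcS(\bO))\to H_1(G,\mcS(G/B'))$ is an isomorphism between $1$-dimensional spaces, so the composite displayed above is injective, whence $a$ is injective. The only delicate point is keeping track of the Shapiro identifications, but since each of the two components is \emph{independently} a nonzero map between $1$-dimensional spaces, there is no room for the components to cancel; this is what makes the argument robust.
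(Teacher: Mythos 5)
Your proof is correct and uses essentially the same strategy as the paper: postcompose to $H_1(G,\mcS(G/B))$, identify the resulting map via Shapiro's lemma with $H_1(T,\mC)\to H_1(B,\mC)$, and use vanishing of higher smooth $U$-homology to conclude it is an isomorphism. The paper's proof is slightly more economical because it composes with a single projection $L\to\mcS(G/B)$ rather than both, which renders your first step (verifying that $H_1(G,L)$ embeds into the direct sum) unnecessary: injectivity of the full composite already forces $a$ to be injective regardless of what the intermediate map does.
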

\begin{pf} 
%It is easy to see that the composition 
% $p_\star\circ a:L\to \mcS (G/B)$ is a surjection. So 
It is sufficient to show that the  map 
$H_1(G, \mcS  (\bO ))\to H_1(G, \mcS  (G/B))$ induced by the composition 
$p_\star\circ a:L\to \mcS (G/B)$
is an embedding. 

 Since $H_1(G, \mcS  (\bO ))=H_1(T, \mC )$, $H_1(G, \mcS  (G/B ))=H_1(B, \mC )$ and $H_{>0}(U,\mC)=0$,
  we see that this map is an isomorphism.
\end{pf}

We can now finish the proof of Proposition \ref{regular}. Since $G$ acts transitively on $G/B$ the map $\mu \to \int\limits _{G/B}\mu$ defines an isomorphism 
$H_0(G,\mcS (G/B))\to \mC$. On the other hand, since $\int \limits_{\bO }\nu =0$ for any $\nu \in \mcA _\bO$ the map 
$H_0 (G, \mcA _\bO)\to H_0(G ,\mcS (\bO))$ equals zero.
So $\dim  (H_0(G, \mcA _\bO))=1$. 
\end{pf}

Recall that $G_s\subset G$ is the subset of regular split semisimple
 elements, let $\mcA _s\subset \mcA$ be the subspace of functions in $\mcA$ 
 supported on $G_s$.
We fix a Cartan subgroup $T$, the Weyl group 
$W= \Zet/2\Zet$  acts on $T$ and on $G/T$ in the usual way.
 Then the map  $$(T-\{ e\}) \times G/T \to G_s,
 (s, g)\to gsg^{-1}$$  induces an isomorphism
\begin{equation}\label{Phit}
\mcS (G_s)\to (\mcS(T-\{e\})\otimes \mcS (G/T))^W.
\end{equation}

\begin{cor}%[split]
\label{split}
a) For $f\in \mcA_s$ the distribution $\hat f$ is locally constant
on $G_s$.

b) The map
 $f\mapsto (t\mapsto \hat f(t))$  induces an isomorphism 
$$
%\Psi :
H_0(G,\mcA _s)\to   \mcS(T-\{e\})^W.$$ 
%\otimes  \mcA _\bO)^W.

\end{cor}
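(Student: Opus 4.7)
Part (a) is immediate from the unnumbered Lemma preceding Proposition \ref{inj} (which states that $\hat f|_{G^{rs}}$ is locally constant for every $f \in \mcA$), combined with the inclusion $G_s \subset G^{rs}$.

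For part (b), first check that the map is well-defined with image in $\mcS(T-\{e\})^W$. Since $\hat f$ is $G$-invariant by construction, the assignment $f \mapsto (t \mapsto \hat f(t))$ factors through $H_0(G,\mcA_s)$. Local constancy of the restriction is part (a); $W$-invariance on $T$ follows from $G$-invariance together with the fact that lifts in $N_G(T)$ realize the $W$-action on $T$ by $G$-conjugation; and compact support in $T-\{e\}$ holds because if $\mathrm{supp}(f) \subset K$ for a compact $K \subset G_s$, then $\hat f(t)$ vanishes outside the compact subset $\{t \in T-\{e\} : \bO_t \cap K \neq \emptyset\}$ of $T-\{e\}$.

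For injectivity, I apply Proposition \ref{regular} fiberwise. That proposition gives $H_0(G,\mcA_{\bO_t}) \cong \mC$ for each regular split orbit $\bO_t$, and the functional $[g] \mapsto \hat g(t)$ is nontrivial on this one-dimensional space (otherwise the averaging map $\tau$ would vanish on $H_0(G,\mcA_{\bO_t})$, contradicting the existence of nonzero $G$-invariant distributions supported on $\bO_t$, e.g.\ the ordinary orbital integral). Hence vanishing of $\hat f|_T$ forces vanishing of the orbit-by-orbit classes of $f$. To globalize this fiberwise statement to $[f] = 0 \in H_0(G,\mcA_s)$, decompose $f$ via a $W$-invariant partition of unity on the compact image of $\mathrm{supp}(f)$ in $(T-\{e\})/W$: over each sufficiently small $W$-invariant open $V \subset (T-\{e\})/W$ the orbit family trivializes via \eqref{Phit}, and a relative version of the argument of Proposition \ref{regular} yields an isomorphism $H_0(G,\mcA_s|_V) \cong \mcS(V)^W$ via $\hat f$. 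Summing the vanishings from each piece of the partition produces $[f] = 0$.

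For surjectivity, given $\psi \in \mcS(T-\{e\})^W$, construct $f \in \mcA_s$ as the $W$-symmetrization under \eqref{Phit} of $\psi \otimes \alpha$ for a suitable $\alpha \in \mcS(G/T)$, chosen so that its constant terms along both Borels $B$ and $B'$ vanish, thereby ensuring $f \in \mcA_s$. The main obstacle is the exact normalization: arranging $\hat f(t) = \psi(t)$ pointwise (rather than up to a nonzero scalar multiple) requires an explicit rank-one computation with the $PGL(2)$ weight function, which has a logarithmic form on the Iwasawa decomposition. A cleaner alternative is to derive surjectivity from the already-established injectivity by a dimension count: extending the exact sequence of Lemma \ref{SL2ex} from a single orbit to the entire family $G_s$ over $(T-\{e\})/W$ and taking $G$-homology, together with the computations $H_*(G,\mcS(G/B)) \cong H_*(T,\mC)$ and $H_i(G,\mC) = 0$ for $i > 0$, directly identifies $H_0(G,\mcA_s)$ with $\mcS(T-\{e\})^W$ via the map $\hat f$.
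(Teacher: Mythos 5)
Part (a) is fine: both your route through the unnumbered Lemma preceding Proposition \ref{inj} and the paper's route through compatibility of \eqref{Phit} with averaging work.

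The critical step in part (b) is showing the functional $[g]\mapsto \hat g(t)$ is nonzero on the one-dimensional space $H_0(G,\mcA_\bO)$, and your argument for this is wrong. You argue that vanishing of $\tau$ on $H_0(G,\mcA_{\bO_t})$ would contradict ``the existence of nonzero $G$-invariant distributions supported on $\bO_t$, e.g.\ the ordinary orbital integral.'' But the ordinary orbital integral annihilates $\mcA_{\bO_t}$: under $\bO_t\cong G/T$ it becomes the functional $\nu\mapsto\int_{\bO_t}\nu$, and the last step of the proof of Proposition \ref{regular} explicitly records that $\int_{\bO_t}\nu=0$ for every $\nu\in\mcA_{\bO_t}$. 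So the existence of $O_t$ is perfectly consistent with $\tau$ vanishing on $H_0(G,\mcA_{\bO_t})$; $O_t$ simply lives on a different quotient of $\mcS(\bO_t)$. Nontriviality genuinely requires an input about the \emph{weighted} orbital integral, and the paper supplies it by harmonic analysis: choose an irreducible cuspidal $\rho$ with $\chi_\rho(t)\ne 0$; its matrix coefficient $m_\rho$ is a cuspidal (hence weightless) function with $\hat m_\rho=\chi_\rho$, and restricting $m_\rho$ to $\bO_t$ gives an element of $\mcA_{\bO_t}$ on which the functional takes the nonzero value $\chi_\rho(t)$. Your surjectivity argument is also incomplete in a related way: the ``family version'' of Lemma \ref{SL2ex} would at best show $H_0(G,\mcA_s)$ has the right size, not that the isomorphism is realized by $f\mapsto \hat f|_T$. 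The paper sidesteps both issues at once by observing that \eqref{Phit}, together with the fact that $G$ acts trivially on the $\mcS(T-\{e\})$ factor, reduces the whole claim (including which map gives the isomorphism) to a single orbit, where Proposition \ref{regular} plus the cuspidal-character nontriviality finish the job.
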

\begin{pf} The isomorphism \eqref{Phit} is clearly compatible with the averaging map 
$f\mapsto \hat f$, which implies $a)$. Likewise, restriction to an orbit is compatible
with averaging, thus in view of \eqref{Phit} it suffices to show that for a fixed
orbit $\bO\subset G_s$ the map  $f\mapsto \hat f (t)$, $t\in \bO$ induces an isomorphism
$H_0(G,\mcA(\bO))\to \Ce$. By Proposition \ref{regular} it suffices to see that this map
is nonzero. This follows, for example, from the fact that the character of a
cuspidal representation does not necessarily vanish on $G_s$, while the character
of an irreducible cuspidal representation is obtained by averaging from its matrix coefficient. 
\end{pf}

Now we can finish the proof of Proposition \ref{inj}.
Let $f\in \mcA$ be such that $\hat f=0$. It follows from
 Corollary \ref{extensions} and Lemma \ref{extension} that we can
 can find $f'$ with the same image in $H_0(G,\mcA)$ such that   $f'=f_s+f_e$  
 where $f_s$ is supported on regular split semisiple elements and
 $f_e$ on regular elliptic elements. The condition $\hat f=0$
 implies that $\hat{f'}=0$, hence $\hat f_s=0$ and $\hat f_e=0$. It is easy to see
 that the condition   $\hat f_e=0$ implies that $[f_e]=0$. So we
 may assume that  the support of $f$ is contained in the subset
 $G_s\subset G$ of regular split semisimple elements. Now  Proposition \ref{inj} follows
 from Corollary \ref{split}.
\end{pf}

\bigskip

Let $\ti \mcD _e$ be the space of germs of  distributions at $e$ and  $ \mcD _e\subset \ti \mcD _e$ be the subspaces spanned by germs of characters of irreducible representations.

\begin{lem}%[germs]
\label{germs} 
The space $\mcD _e$ is $2$-dimensional. It is spanned  by germs of characters of irreducible cuspidal representations. 
\end{lem}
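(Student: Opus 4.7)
My plan is to bound $\dim \mcD _e$ both above and below by $2$, which simultaneously establishes both assertions of the lemma.

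For the upper bound I would appeal to Harish--Chandra's local character expansion, which is available since we have assumed $\operatorname{char}(F)=0$: for every irreducible admissible representation $\rho$ of $G$, the germ of $\chi_\rho$ at $e$, pulled back through the $G$-equivariant analytic bijection $\phi$ between a neighborhood of $0\in \fg$ and a neighborhood of $e\in G$, is a finite $\Ce$-linear combination $\sum _\mcO c_\mcO(\rho)\,\hat\mu_\mcO$ of Fourier transforms of $G$-invariant measures on nilpotent $G$-orbits $\mcO \subset \fg$. Hence $\mcD _e$ is contained in the image in $\ti\mcD _e$ of the space $\mcG _{G,e}$ from Definition \ref{distributions}, and its dimension is bounded by the number of nilpotent $G$-orbits in $\fg$. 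For $G=PGL(2)$ the adjoint action has exactly two such orbits, namely $\{0\}$ and the regular nilpotent orbit $\mcO_{reg}$; the familiar parametrization of regular nilpotent $SL_2$-orbits by $F^\star/(F^\star)^2$ collapses for $PGL(2)$ because arbitrary unit scalings are available modulo the center. We conclude $\dim \mcD _e\le 2$.

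For the matching lower bound I would exhibit two irreducible cuspidal representations $\rho_1,\rho_2$ whose germs at $e$ are linearly independent. This is precisely the input already used in the proof of Lemma \ref{r_onto}, and it can be verified directly from the explicit supercuspidal character tables of $PGL(2,F)$ recorded in \cite[\S 2.6]{Sil}. One chooses, for instance, two depth-zero supercuspidals attached to sufficiently distinct characters of an elliptic torus and checks that the resulting pairs of Harish--Chandra coefficients $(c_0(\rho_i),c_{\mcO_{reg}}(\rho_i))$, $i=1,2$, are linearly independent; equivalently, the restrictions of $\chi_{\rho_1}$ and $\chi_{\rho_2}$ to any small $G$-invariant neighborhood of $e$ are not proportional.

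Combining the two bounds yields $\dim \mcD _e = 2$, and since a basis is already furnished by the two cuspidal character germs just exhibited, $\mcD _e$ is spanned by germs of irreducible cuspidal characters, as claimed. The only substantive step is producing the lower bound, but this is routine given the Harish--Chandra expansion together with the classical character tables, so I do not anticipate a serious obstacle beyond correctly assembling the classical input.
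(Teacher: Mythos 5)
Your argument is correct and rests on the same fundamental input as the paper's proof, namely Harish--Chandra's local character expansion, but you organize the endgame slightly differently. The paper invokes Harish--Chandra's result wholesale in both directions: the germ of every irreducible character is a combination of Fourier transforms $\hat\mu_\mcO$ of nilpotent orbital integrals, \emph{and} these $\hat\mu_\mcO$ are linearly independent as germs, so $\mcD_e$ has a basis indexed by the two nilpotent orbits of $\mathfrak{pgl}_2$; spanning by cuspidal germs is then cited as a further theorem from the same source. You use only the ``easy'' direction of Harish--Chandra (the expansion itself, giving the upper bound $\dim\mcD_e\le 2$) and replace the linear independence of the $\hat\mu_\mcO$ by an explicit lower bound: two supercuspidal characters from Silberger's tables with non-proportional germs near $e$. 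This has the advantage of not relying on the homogeneity/independence theorem for nilpotent orbital integrals, at the cost of a small concrete verification. Both routes are sound; yours is essentially an unpacked and slightly more elementary version of what the paper compresses into a one-line citation, and the explicit-character step you use is the same one the paper itself already invokes in the proof of Lemma \ref{r_onto}.

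One small point worth stating explicitly in your write-up: the collapse of the $SL(2)$ parametrization of regular nilpotent orbits by $F^\times/(F^\times)^2$ to a single orbit for $PGL(2)$ is because $PGL(2)=GL(2)/Z$ acts on $\mathfrak{sl}_2$ through $GL(2)$-conjugation, and $\mathrm{diag}(c,1)$ rescales a regular nilpotent by an arbitrary $c\in F^\times$. Your phrase ``arbitrary unit scalings modulo the center'' gestures at this but is loose enough to invite confusion.
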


\begin{pf} The second statement is a special case of a theorem of Harish-Chandra \cite{HC}. The first one also follows from {\em loc. cit.}, as it shown
there that more generally the space  $\mcD _e$ has a basis indexed by unipotent orbits.
\end{pf}

%\begin{definition}We denote by 

Recall that $\mcE \subset  \mcD$ is the subspace of distributions $\alpha$ satisfying the following three conditions: 

a) There exists a compact subset $C$ in $G$ such that $supp (\alpha)\subset C^G$.

b) The restriction of $\alpha$ on $G-\{ e\}$ is given by a locally constant function.

c) The germ of $\alpha$ at $e$ belongs to $\mcD _e$.
%\end{definition}

\begin{lem} $\tau (\mcA )\subset \mcE$.
\end{lem}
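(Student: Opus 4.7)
The plan is to verify the three conditions defining $\mcE$ for $\hat{f}$, $f\in \mcA$. Condition (a) is immediate since $\mathrm{supp}(\hat{f})$ lies in the $G$-saturation of the compact set $\mathrm{supp}(f)$. For (b) and (c), I would use the decomposition $\hat{f}=\widehat{f_{cusp}}+\widehat{f''}$ provided by the preceding results: pick $f_{cusp}\in \mcA_{cusp}$ with $[\kk(f)]=[\kk(f_{cusp})]$ by Corollary \ref{cusp}(a), then invoke Corollary \ref{extensions} to produce $f''\in \mcA$ cohomologous to $f-f_{cusp}$ in $H_0(G,\mcA)$ with $f''|_{\bar N}=0$. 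Since $\tau$ factors through $H_0(G,\mcA)$, the decomposition follows, and it suffices to show both summands lie in $\mcE$.

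For $\widehat{f_{cusp}}$, I would identify it with a linear combination of characters of irreducible cuspidal representations. Decomposing $f_{cusp}$ along cuspidal Bernstein components and writing each component as a combination of matrix coefficients $m_\rho$, the earlier Lemma $\hat{f}|_{G^{rs}}(g)=WO_g(f)$ combined with Arthur's formula \eqref{Ar_f} yields $\widehat{m_\rho}=\chi_\rho$ on $G^{rs}$, and hence as invariant distributions on $G$ by density of $G^{rs}$. Each $\chi_\rho$ for $\rho$ cuspidal and $G=PGL(2)$ (trivial center) satisfies the three conditions: compact support (verifying (a)); local constancy on $G-\{e\}$ by classical Harish-Chandra theory for rank-one groups (see \cite[\S 2.6]{Sil}, verifying (b)); germ at $e$ in $\mcD_e$ by the very definition of $\mcD_e$ in Lemma \ref{germs} (verifying (c)).

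For $\widehat{f''}$, condition (a) is immediate. For (c), I would apply the Harish-Chandra--Howe germ expansion at $e$, which writes the germ of $\widehat{f''}$ as a linear combination of Fourier transforms of nilpotent orbital integrals whose coefficients (via descent through $\phi$ to $\fg$) are proportional to orbital integrals of $f''$ along nilpotent orbits; since $f''$ vanishes on $\bar N$, these coefficients vanish, so the germ of $\widehat{f''}$ at $e$ is zero, lying trivially in $\mcD_e$. For (b), local constancy on $G^{rs}$ is the earlier Lemma; near a non-identity unipotent $u$, the Shalika-germ analysis uses that $Z_G(u)=U$ is abelian, so the Lie algebra of the centralizer has only the trivial nilpotent orbit, making the germ expansion trivial and forcing $\widehat{f''}$ to extend locally constantly (with value $0$) to a neighborhood of $u$. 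The main obstacle is the careful execution of these germ expansions near both $e$ and non-identity unipotent elements, invoking the classical Harish-Chandra regularity and Harish-Chandra--Howe expansion theorems (which rely on the hypothesis $\mathrm{char}(F)=0$ in force throughout this subsection).
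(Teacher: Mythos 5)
Your proof follows the same overall strategy as the paper: split $[\hat f]$ into a cuspidal piece $\widehat{f_{cusp}}$ (via Corollary~\ref{cusp}) and a piece $\widehat{f''}$ with $f''|_{\bar N}=0$ (via Corollary~\ref{extensions}), and verify $\mcE$-membership for each. The treatment of the cuspidal piece matches the paper's. Where you diverge is in handling $\widehat{f''}$, and there the argument has a genuine gap.

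For condition~(c) you invoke the Harish-Chandra--Howe germ expansion for $\widehat{f''}$ and assert that the coefficients ``are proportional to orbital integrals of $f''$ along nilpotent orbits.'' That identification is \emph{not} a standard fact in this setting: the Harish-Chandra--Howe coefficients of an invariant distribution are intrinsic constants attached to the distribution, and relating them to nilpotent orbital integrals of a function $f''$ whose average is that distribution is a nontrivial duality statement (it is delicate even for ordinary orbital integrals, and here $\hat f''$ arises from \emph{weighted} orbital integrals). The paper avoids germ expansions entirely: since $f''$ is locally constant and vanishes on $\bar N$, its compact support is disjoint from $\bar N=p^{-1}(0)$, hence $|p|\geq\ep$ on $\mathrm{supp}(f'')$ for some $\ep>0$, hence $\mathrm{supp}(f'')\cap G^\ep=\emptyset$. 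Because $G^\ep$ is a $G$-invariant open neighborhood of $e$, this immediately gives $\widehat{f''}|_{G^\ep}=0$, so the germ at $e$ is zero. This is the content the paper extracts from Lemma~\ref{extension} and Corollary~\ref{extensions}, and it replaces your germ-expansion step with a two-line compactness observation. Your argument for~(b) near a regular unipotent $u\neq e$ via ``Shalika-germ analysis at $Z_G(u)$'' is likewise misstated --- germ expansions live at semisimple points, not unipotent ones; but the same $G^\ep$ observation already yields $\widehat{f''}=0$ near $u$, and local constancy of the cuspidal characters on the regular set handles $\widehat{f_{cusp}}$. (The paper itself only checks~(b) at semisimple points, consistent with Definition~\ref{distributions'}, and handles them via Corollary~\ref{split}(a) and its elliptic analogue.) So the conclusion you aim for is correct, but the germ-expansion route would require proving an identity you only assert; substituting the paper's support argument fills the gap.
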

\begin{pf} Fix $f\in \mcA$. It is clear that the distribution $\hat f$ satisfies condition $a)$.

To prove that  $\hat f$ satisfies condition $b)$ we have to show that for any semisimple element $s\in G-\{ e\}$ there exists an open neighborhood $R\subset G$ of $s$ such that 
the restriction $\hat f|_{R}$ is a constant. If $s$ is split then this follows from Corollary \ref{split}(a), if $s$ is elliptic the proof is similar.

%Let $S=Z_G(s_0)$ and $\ti \Psi :S\times G/S$ be given by $(s,\bar g)\to gsg^{-1}$. There exists an open neighborhood $V\subset S$ of $s_0$ such that the restriction  $\ti \Psi _V$ 
%of $\Psi$ to
% $V\times G/S$ is an open imbedding and there exists a function 
%$\bar f$ on $G/S$ such that  $(\ti \Psi _V)^\star (s,\bar g)=\bar f(\bar g)$. This shows that the restriction of $\hat f$ 
%on $R:=Im (\ti \Psi _V)$ is equal to $\hat f(s_0)$.

To prove that  $\hat f$ satisfies  condition $c)$ we observe
 that Corollary  \ref{cusp} implies  existence of 
 $f_{cusp}\in \mcA _{cusp}$ such that 
$[\kk (f)]=[\kk (f_{cusp})]$.  

It is easy to see that when $f$ is a matrix coefficient of an irreducible cuspidal 
representation $\rho$ then $\hat f$ is proportional to the character of $\rho$.
Thus condition $c)$ is satisfied by $\alpha_{cusp}=\hat{f_{cusp}}$. However, by 
Lemma \ref{extension} and Corollary \ref{extensions} the germs of $\alpha$ and $\alpha_{cusp}$ at $e$ coincide. 
\end{pf}

%Since  by Claim \ref{germs} the germ of   $\hat f_{cusp}$ at $e$ is a linear combination of
% characters of cuspidal irreducible representations we can assume
% (after replacing $f$ by $f-f_{cusp}$) that $[\kk (f)]=0$. As
% follows from \ref{extensions} there exists $f'\in \mcA$ such
% that $[f']=[f]$ and $\kk (f')\equiv 0$. This implies that that
% there exists an open neighborhood $U\subset G$ of $e$ such that 
% $\hat f_{|U}\equiv 0$.
%\end{pf}

\begin{prop}%[sur]
\label{sur}  $\tau (\mcA )= \mcE$.
\end{prop}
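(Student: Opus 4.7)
My plan is to prove surjectivity by three successive reductions: first eliminate the germ at $e$ using cuspidal matrix coefficients, then restrict the remainder to the regular semisimple set, and finally realize the split and elliptic pieces separately via orbit-by-orbit isomorphisms.

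First I would apply Lemma \ref{germs}: the germ of $\alpha$ at $e$ lies in $\mcD_e$, the span of germs of irreducible cuspidal characters. For each cuspidal $\rho$, a matrix coefficient $m_\rho \in \mcA_{cusp}$ averages to a nonzero scalar multiple of $\chi_\rho$, so a suitable linear combination $f_{cusp} \in \mcA_{cusp}$ yields $\hat f_{cusp}$ with the same germ at $e$ as $\alpha$. Replace $\alpha$ with $\alpha' := \alpha - \hat f_{cusp} \in \mcE$, which now has zero germ at $e$.

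Next, since $\alpha'|_{G - \{e\}}$ is locally constant and the germ of $\alpha'$ at $e$ vanishes, $\alpha'$ must vanish in some open neighborhood of $e$. The regular unipotent conjugacy class in $PGL(2)$ has $e$ in its closure (conjugate by $\operatorname{diag}(t,1)$ and let $|t| \to \infty$), so by $G$-invariance and local constancy $\alpha'$ must also vanish on the entire unipotent set. Hence $\alpha'$ is supported on regular semisimple elements and decomposes cleanly as $\alpha'_s + \alpha'_e$, where each summand is the restriction to split (resp.\ elliptic) regular semisimple elements extended by zero, and each lies in $\mcE$.

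The split piece is handled directly by Corollary \ref{split}(b): the averaging map induces an isomorphism $H_0(G, \mcA_s) \cong \mcS(T - \{e\})^W$, and since $\alpha'$ has compact support modulo conjugation, $\alpha'_s$ corresponds to an element of the target, producing $f_s \in \mcA_s$ with $\hat f_s = \alpha'_s$. For the elliptic piece I would establish a parallel statement: for each elliptic regular conjugacy class $\bO_t$ the defining condition of $\mcA$ is vacuous on $\bO_t$ (no proper parabolic of $PGL(2)$ has its Levi containing an elliptic $t$), so $\mcA_{\bO_t} = \mcS(\bO_t)$; by Remark \ref{c_ell} we have $\dim H_0(G, \mcA_{\bO_t}) = 1$; and nonvanishing of some cuspidal character on $\bO_t$ shows the averaging map $H_0(G, \mcA_{\bO_t}) \to \Ce \cdot \mathbf{1}_{\bO_t}$ is an isomorphism. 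Summing over the finitely many orbits in the support of $\alpha'_e$ produces $f_e \in \mcA$ with $\hat f_e = \alpha'_e$, and $f := f_{cusp} + f_s + f_e \in \mcA$ then satisfies $\hat f = \alpha$. The main obstacle I anticipate is the elliptic analog --- specifically, verifying that the orbit-by-orbit isomorphisms assemble into the desired surjection onto all locally constant, compactly supported, invariant distributions on the elliptic regular set, and that the resulting $\hat f_e$ equals $\alpha'_e$ as a distribution (not merely as a function on the elliptic regular set).
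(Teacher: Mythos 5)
Your proof follows the same path as the paper: subtract a linear combination of cuspidal characters using Lemma \ref{germs}, observe that the remainder is supported on the regular semisimple set and so decomposes into split and elliptic pieces, and realize each piece via Corollary \ref{split} and its elliptic analog. The extra details you supply --- vanishing on the unipotent set via local constancy and $G$-invariance, and the orbit-by-orbit assembly of the elliptic part --- are precisely the points the paper leaves implicit (the paper says only that $\alpha_e$ is handled ``by a similar argument'').
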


\begin{pf} 
It remains  to show that every  $\alpha \in \mcE$ is in the image of
 $\tau$.   Lemma \ref{germs} shows that
there exists $\beta \in \mcE$ which is a linear combination
of characters of cuspidal representations such $\alpha - \beta$
vanishes on an open neighborhood of $e$. Thus we have 
$\alpha-\beta=\alpha_s+\alpha_e$, where $\alpha_s$ is supported
on $G_s$, while $\alpha_e$ is supported on $G_e$.

Now $\alpha_s$ is in the image of $\tau$ by Corollary \ref{split},
while $\alpha_e$  is in the image of $\tau$ by a similar
argument. Also, $\beta$ is in the image of $\tau$ since
the character of an irreducible cuspidal representation 
$\rho$ equals $\hat{f}$ where $f$ is a matrix coefficient
of $\rho$.

Proposition \ref{sur} and therefore Theorem \ref{imbedding}
are proven.
\end{pf}

\section{The compactified category of smooth modules} 
\label{compcat}
 \subsection{Definition of the compactified category} \label{def_comp}
 For a parabolic $P=LU$  let $L^0\subset L$ be the
subgroup generated by compact subgroups; thus $L^0$ is the kernel of the
unramified characters of $L$. Set $\check{\Lambda}_P=L/L^0$.

% and $L/L^0$ is the cocharacter lattice of the algebraic torus $L/L'$. 

Let $\Lambda_P$ be the group of $F$-rational characters of $L$ and $\Lambda_P^+$ be the subset
of $P$-dominant weights, i.e. weights which are (non-strictly) dominant with respect to any
(not necessarily $F$-rational) Borel subgroup $B\subset P$. 
We have a nondegenerate
pairing between the lattices $\check{\Lambda}_P$ and $\Lambda_P$ given by:
$$\langle x L^0, \la \rangle = val_F (\la (x)).$$

%Notice that $L/L^0=X_*(L/L')_\Gamma$, $\Lambda_P=X^*(L/L')^\Gamma$ where $\Gamma$ is the Galois group of $G$ and $L'$ is the derived group, 
%this yields another way to see that the two lattices are dual.

Let  $\check{\Lambda}_P^+$  be the subsemigroup
defined by:
$$ \check{\Lambda}_P^+ =\{ x\in \check{\Lambda}_P \ |\  \langle x, \la \rangle \geq 0 \ \ \forall \la\in \La_P^+\},$$
and let $L^+_P\subset L$
be the preimage of $\check{\Lambda}_P^+$ under the projection 
$L\to \check{\Lambda}_P$. %=X_*(L/[L,L])$. , and the  ($\Qu$-valued)
%pairing between $X_*(L/[L,L])$ and the set of roots is induced by
%the isomorphism $X_*(L/[L,L])\otimes \Qu\cong X_*(Z(L))\otimes \Qu$.

For a pair of parabolics $P\supset Q$ let $L_Q^{P+}
\subset L_Q$ denote 
the image of $L_P^+\cap Q$ in $L_Q=Q/U_Q$.
It  is easy to see that  $L_Q^{P+}
\supset L_Q^+$. 
%consists of elements $l\in L_Q$   such that 
 %$\langle pr_P(l), \alpha \rangle \geq 0$ for
% roots $\alpha$ in the radical of $P$.

For an open submonoid $M\subset G$ we let $Sm(M)$ denote the category
of nondegenerate finitely generated $\H(M)$-modules; this is easily seen 
to be equivalent to the category of finitely generated smooth $M$-modules.

For parabolic subgroups $P\supset Q$ we have the "Jacquet" functor $J^P_Q: Sm (L_P^+)\to 
Sm(L_Q^{P+})$, $M\mapsto M_{\overline{U_Q}}$, where $\overline{U_Q}$ is image of $U_Q$ in
$L_P=P/U_P$. 

\medskip

To simplify the wording in the following definition we fix a minimal parabolic
$P_0$, then by a standard parabolic we mean a subgroup $P$ containing $P_0$.
 
\begin{Def}\label{sm_def}
 The {\em compactified category} of smooth $G$-modules  $\Smb^{qc}=\Smb^{qc}(G)$ 
  is the category whose
 object is a collection $(M_P)$ indexed
by standard 
%conjugacy classes of 
parabolic subgroups $P=L_PU_P$, where $M_P$ is a smooth
%(finitely generated) 
module over $L_P^+$, together with isomorphisms
\begin{equation}\label{isom_struk}
J^P_Q(M_P)\cong \H( L_Q^{P+})\otimes _{\H(L_Q^+)} M_Q
\end{equation}
%\otimes_{\O(\Z(L_P^+))} \O(\Z(L_Q^+))$$
fixed for every pair of standard parabolic subgroups $P=L_PU_P\supset
Q=L_QU_Q$; here $\H$ denotes the algebra of locally constant compactly supported distributions. The isomorphisms 
are required to satisfy the associativity identity  for each triple of parabolics $P_1\supset P_2\supset P_3$.

An object in the compactified category is called coherent if the module $M_P$ is finitely generated for
all $P$.

We let $\Smb=\Smb(G)\subset \Smb^{qc}$ denote the full subcategory of coherent objects.
\end{Def}

%Notice that the induction functor can be rewritten as commutative induction

It is easy to see that $\Smb(G)$ is an abelian category, the functor sending $(M_P)\in \Smb$ to $M_G$
identifies  $Sm(G)$ with  a Serre quotient of $\Smb$.

We also have an adjoint functor  $Sm(G)\to \Smb^{qc}$. This functor sends admissible modules
but not general finitely generated modules to $\Smb$.

\begin{example}\label{sl2ex}
Let $G=SL(2)$. In this case the category $\Smb$ admits the following more direct
description. A component of the spectrum $\Z$ of Bernstein center  is in this case
either a point or an affine curve, thus  $\Z$ admits a canonical (componentwise) compactification $\Zb$.
Notice that $\partial \Z=\Zb\setminus \Z$ is identified with the set $(O^\times)^*$ of characters of 
$T_0=O^\times$. Let $T^+=O\setminus \{0\}\subset F^\times =T$, 
thus $T^+\cong O^\times\times \Zet_{\geq 0}$. 
Set $\Zt^+=Spec(\H(T^+))\cong (O^\times)^*\times \Aone$. 
Notice that 
we have a natural map $\pi:\Zt^+ \to 
\Zb$ inducing an isomorphism $(O^\times)^*\times \{0\}\to \partial \Z$.
Moreover, $\pi$ is etale at $(O^\times)^*\times \{0\}$.

The full Hecke algebra $\H$ defines a quasicoherent sheaf
of algebras on $\Z$, we now describe its extension to a quasicoherent sheaf
of algebras $\Hb$ on $\Zb$. The latter depends on the choice of a maximal
open compact subgroup $K_0=SL(2,O)$.  Fixing this choice we set $\Ht^+=End_{T^+}(\mcS(G/U^+))$,
where $(G/U)^+=O^2\setminus \{0\}\subset F^2\setminus \{0\}=G/U$. We also let $\Ht=End_T(\mcS(G/U))$.
It is clear that $\Ht^+$ defines a quasicoherent sheaf on $\Zt^+$ whose restriction to the open 
subset $\Zt:=\Zt^+\setminus (O^\times)^*\times \{0\}$ is the quasicoherent sheaf defined by $\Ht$.

The action of $\H$ on $\mcS(G/U)$ defines a homomorphism
$\pi^*(\H)\to \Ht$ which is an isomorphism on a Zariski neighborhood of $\partial \Zt^+=\Zt^+\setminus \Zt$.
Thus we get a well defined quasicoherent sheaf of algebras $\Hb$ on $\Zb$ such that
$\Hb|_\Z=\H$ and the induced map $\pi^*(\Hb)|_\Zt\to \Ht$ extends to a map $\pi^*(\Hb)\to \Ht^+$
which is an isomorphism on a neighborhood of $\partial \Zt^+$. 

It is clear that $K_0^2$ acts on $\Hb$ and for an open subgroup $K\subset K_0$
the subsheaf $\Hb_K$ of $K^2$ invariants is a coherent sheaf of algebras.

We leave it to the reader to show that although $\Hb$ depends on an auxiliary 
choice, different choices lead to algebras which are canonically Morita equivalent. 
Thus we can consider the category of sheaves of nondegenerate $\Hb$-modules which can be checked
to be canonically equivalent to $\Smb$. If the subgroup $K\subset K_0$ is nice 
in the sense of \cite{centre} then for every component $X$  of $\Zb$   either the coherent sheaf of algebras
$\Hb_K|_X$ is zero or the corresponding summand in $\Smb$ (respectively, $\Smb^{qc}$) is canonically equivalent to 
the category of  coherent (respectively, quasicoherent) sheaves of  $\Hb_K|_X$-modules.
\end{example}

\subsection{Compactified center and a spectral description of the compactifed category}

Let $Z=Z(G)$  be the  Bernstein center of $G$
and $\Z=Spec(Z)$ be its spectrum. By the main result of \cite{centre} (the set of closed points of)
$\Z$ is in bijection with the set $Cusp(G)$ of {\em cuspidal data}, i.e. the set of $G$-conjugacy classes of pairs $(L,\rho)$, where
$L\subset G$ is a Levi subgroup and $\rho$ is a cuspidal irreducible representation of 
$L$. 

\subsubsection{Compactified center} Let  $\Zbar$  denote its
compactification described as follows. We have a
canonical isomorphism $\Z=\Ztil/W$ where $W$ is the Weyl group, and
$\Ztil$ parametrizes pairs $(L,\rho)$ where $L$ is a Levi subgroup
containing a fixed maximally split Cartan $T$ and $\rho$ is a cuspidal
representation of  $L$. 
The complex torus  $\LT={\mathcal X}(L)$ acts
nn the union $\Ztil_L$ of
components corresponding to a given Levi subgroup $L\supset T$;
here ${ \mathcal X}(L)$ stands for the group 
of unramified characters of $L$ acting on the set of representations by twisting. Notice that $\LT$ is a torus with $X^*(\LT)=L/L^0$.
The action is transitive on each component
and the stabilizer of each point is finite. The space
$X^*(\LT)_\RE=X_*(Z(L))_\RE$ (where $X_*$ stands for 
the lattice of $F$-rational cocharacters)
%($\Gamma=Gal(\bar{F}/F)$) 
contains hyperplanes corresponding to
the roots of $Z(L)$ in $\g$; the fan formed by these hyperplanes
defines  an equivariant compactification $\overline{\LT}$ of
$\LT$. We set $$\Zbar =\wt{\Zbar}/W,\ \ \ \ \ \ \ \ \  \wt{\Zbar}=\cupl_L \overline{\LT} \times
^{\LT} \Ztil_L , $$ where the right hand side makes sense
because the action of $W$ on $\Ztil$ extends to the
compactification, here we use the notation $X\times^HY=(X\times Y)/H$.
Notice that every component of $\Ztil_L$ is of the form $^LT/A$ for a finite
subgroup $A\subset ^LT$, thus the corresponding component of
$\overline{\LT} \times
^{\LT} \Ztil_L $ is identified with $\overline{\LT}/A$.

For a parabolic $P=LU$ let $Z^0(L)\subset Z(L)$, $Z^+(L)\subset Z(L)$
be the subalgebras consisting of distributions supported on $L^0$ and $L_P^+$
respectively, set also $\Z^0(L)=Spec(Z^0(L))$, $\Z^+(L)=Spec(Z^+(L))$. 

It is clear that % (the set of closed points of) 
\begin{equation}\label{Z0}
\Z^0(L)= \Z(L)/{\mathcal{X}}(L),
\end{equation}
where ${\mathcal{X}}(L)$ is the group of unramified characters of $L$.

\begin{Prop}
\label{L1}
a) $\Zbar$ admits a canonical stratification indexed by conjugacy
classes of parabolic subgroups, where the stratum $\Zbar_P$
corresponding to the class of a parabolic $P$ is identified with $\Z^0(L)$.

b) The embedding $\Z^0(L)\to \Zbar$ canonically extends to a
 map $\Z^+(L)\to \Zbar$ which is etale on a Zariski neighborhood of $\Zbar_P\cong \Z^0(L)$. 

%The stratum $\Zbar_P$ has an etale neighborhood $N_P$ canonically
%isomorphic to $\Z(L^+)$; 
Given two parabolics $P\subset Q$ we have a canonical
map $c_P^Q:\Z^+(L_P)\to \Z^+(L_Q)$ which is compatible with maps to
$\Zbar$.

Moreover, for three parabolics $P_1\subset P_2\subset P_3$ we have
$$c_{P_1}^{P_3}=c_{P_2}^{P_3}c_{P_1}^{P_2}.$$

\end{Prop}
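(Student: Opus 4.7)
My plan is to reduce all three assertions to toric geometry applied to $\overline{\LT}$. For part (a), the fan of $\overline{\LT}$ consists of Weyl chambers for the root system of $Z(L)$ in $\g$, and its cones are in canonical bijection with parabolic subgroups $P$ whose Levi $L_P$ contains $L$: the cone $\sigma_P$ is the set of cocharacters pairing non-negatively with the roots of $U_P$. Standard toric geometry identifies the corresponding torus orbit $O_{\sigma_P}\subset \overline{\LT}$ with a quotient $\LT/S_P$, where $S_P\subset \LT$ is the subtorus whose cocharacter lattice is the saturated sublattice spanned by $\sigma_P$; a character-lattice computation then identifies $\LT/S_P$ with $\mathcal{X}(L_P)={}^{L_P}T$. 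Consequently the stratum of $\overline{\LT}\times^{\LT}\Ztil_L$ over $O_{\sigma_P}$ is $\Ztil_L/S_P$, and after passing to the $W$-quotient — noting that different Levis $L$ sitting inside a fixed $L_P$ are $N_G(L_P)$-conjugate — this is identified with $\Z^0(L_P)$, proving (a).

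For part (b), I would use that the affine toric chart $V_P:=\mathrm{Spec}(\mC[\sigma_P^\vee\cap X^*(\LT)])=\mathrm{Spec}(\mC[\check\La_P^+])$ is an open neighborhood of $O_{\sigma_P}$ inside $\overline{\LT}$. The algebra $Z^+(L)$ of conjugation-invariant distributions on $L_P^+$ is naturally identified with $Z(L)\otimes_{\mC[\check\La_P]}\mC[\check\La_P^+]$, so $\Z^+(L)$ coincides with $\Z(L)\times^{\LT}V_P$, an open subscheme of $\wt\Zbar$ that meets the preimage of $\Zbar_P$. Taking the $W$-quotient then makes the induced map $\Z^+(L)\to\Zbar$ étale on a Zariski neighborhood of $\Zbar_P$; the étale (rather than open) character reflects only the finite stabilizers $A$ that appear in the description of components of $\wt\Zbar$ as $\overline{\LT}/A$.

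For the functoriality, given $P\subset Q$ the Levi inclusion $L_P\hookrightarrow L_Q$, combined with the natural comparison of positive monoids (reflecting the relative dominance condition encoded by the containment $L_Q^{P+}\supset L_Q^+$), yields a ring map $Z^+(L_Q)\to Z^+(L_P)$ and hence the morphism $c_P^Q$ on spectra. Compatibility with the maps to $\Zbar$ is inherent to the toric description, and the cocycle identity $c_{P_1}^{P_3}=c_{P_2}^{P_3}c_{P_1}^{P_2}$ follows from composing the monoid inclusions in the chain $P_1\subset P_2\subset P_3$.

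The step I expect to require the most care is part (a): tracking the $W$-action on $\wt\Zbar$ to verify that two parabolics yield the same stratum in $\Zbar$ precisely when they are $G$-conjugate, and that different choices of internal Levi $L\subset L_P$ together with different cones in the disjoint union $\bigcup_L \overline{\LT}\times^{\LT}\Ztil_L$ collapse correctly after the $W$-quotient. The combinatorics here are standard but somewhat intricate, requiring a careful understanding of how $W$ permutes cones across different components.
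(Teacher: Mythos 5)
Your approach is essentially the same as the paper's: reduce all three claims to the toric geometry of the compactifications $\overline{\LT}$, describe the orbit stratification and affine charts, then track the $W$-quotient. The paper likewise stratifies $\wt{\Zbar}_L$ by $\LT$-orbits indexed by parabolics $Q\supset L$ and builds the chart around $\wt{\Zbar}_L\{Q\}$ from the affine toric piece for the cone $\RE^{\geq 0}\Lambda_Q^+$.

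There is one real slip in part (a). You write that a character-lattice computation identifies the orbit $O_{\sigma_P}=\LT/S_P$ with $\mathcal{X}(L_P)$. This is false: the subtorus $S_P$ — the one whose cocharacter lattice is $X_*(\LT)\cap\mathrm{span}(\sigma_P)$ — is (up to isogeny) $\mathcal{X}(L_P)$, and therefore the orbit is $\LT/S_P\cong\mathcal{X}(L)/\mathcal{X}(L_P)$, exactly as the paper states. The two are generally not isomorphic (already for $G=SL(n)$, $n\geq 4$, with $L$ a maximal torus and $L_P$ the Levi of a maximal parabolic the dimensions disagree). Note moreover that your subsequent step — identifying the stratum of $\overline{\LT}\times^{\LT}\Ztil_L$ over $O_{\sigma_P}$ with $\Ztil_L/S_P$ and then matching this with $\Z^0(L_P)=\Z(L_P)/\mathcal{X}(L_P)$ after the $W$-quotient — requires precisely $S_P\cong\mathcal{X}(L_P)$, not the stated (incorrect) identification of $\LT/S_P$. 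So the step you actually need is true, but the sentence asserting $\LT/S_P\cong\mathcal{X}(L_P)$ should be replaced by the identification of $S_P$ with $\mathcal{X}(L_P)$. Aside from this, parts (b) and (c) are compressed but follow the paper's line of reasoning; in (b) the identification $V_P\cong\mathrm{Spec}(\mC[\check\Lambda_P^+])$ is correct provided you take $L=L_P$ (a maximal cone of the fan), and it would be worth being explicit that the twisted product $\Ztil(L)\times^{\LT}V_P$ embeds openly into $\wt{\Zbar}$ component by component via the compatible inclusions of fans $X_*(\LT)\hookrightarrow X_*({}^MT)$ for the Levis $M\subset L$ appearing in $\Z(L)$.
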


\begin{pf}
Let $\wt{\Zbar}_L =  \overline{\LT} \times
^{\LT} \Ztil_L $. 

It is a standard fact that  $\LT$-orbits in $ \overline{\LT}$ are in bijection
with parabolic subgroups containing $L$, so that the orbit $\overline{^LT}_Q$ corresponding
to a parabolic $Q=MU_Q$  is identified with ${\mathcal X}(L)/{\mathcal X}(M)$. 
The stratification of $\overline{\LT}$ by $\LT$-orbits induces a stratification  on 
$\wt{\Zbar}_L$, the stratum corresponding to a parabolic $Q$ will be denoted by $\wt{\Zbar}_L\{Q\}$.

Fix a conjugacy class $\bf{P}$ of parabolic subgroups and set $\wt{\Zbar}_L({\bf{P}})=
\cupl_{Q\in {\bf{P}} }\wt{\Zbar}_L\{Q\}$.
Let $\wt{\Zbar}_{\bf{P}}=
\cupl_L \wt{\Zbar}_L(\bf{P})$.

It is clear that $(\wt{\Zbar}_{\bf{P}})$ is a stratification of $\wt{\Zbar}$ and each stratum is
$W$-invariant. Thus $\Zbar_{\bf{P}}:= \wt{\Zbar}_{\bf{P}}/W$ are strata of 
a stratification of $\Zbar$.

The map $Q\mapsto \wt{\Zbar}_L\{Q\}$ is easily seen to be $W$-equivariant, it follows
that for a parabolic $P=LU\in \bf{P}$ we have 
$$\Zbar_{\bf{P}}\cong \cupl_{M, T\subset M \subset L} \wt{\Zbar}_M\{P\}/W_L.$$

The above isomorphism $\overline{^LT}_Q\cong {\mathcal X}(L)/{\mathcal X}(M)$
shows that 
$ \wt{\Zbar}_M\{P\}\cong \Zbar_M(L)/{\mathcal X}(L)$. Passing to the union over $M$
and taking quotient by the action of $W_L$ (which commutes with the action of ${\mathcal X}(L)$) we get
$\Zbar_{\bf{P}}\cong \Z(L)/{\mathcal X}(L)$ which yields (a) in view of
\eqref{Z0}.

To check (b) observe that for parabolic subgroups $Q=MU_Q\supset P=LU_P\supset T$ the cone 
$\RE^{\geq 0} \Lambda_Q^+$ belongs to the fan defining the toric variety $\overline \LT$.
Let $V_L\{Q\}$ be the corresponding affine open subset in $\overline \LT$ and ${\mathcal V}_L\{Q\}$
be the corresponding open affine in $\wt{\Zbar}_L$. Thus ${\mathcal V}_L\{Q\}$
is a Zariski open neighborhood of $\wt{\Zbar}_L\{Q\}$. 

It is easy to see that  ${\mathcal V}_L\{Q\}$ is $W_M$ invariant and ${\mathcal V}_L\{Q\}/W_M\cong \Z^+(L)$. 
Since $\Zbar=\wt{\Zbar}/W$,  claim (b) follows from the fact that the stabilizer of any point
$x\in \wt{\Zbar}\{Q\}$ is contained in $W_M$.

c) follows by inspection. 
\end{pf}

In order to relate $\Smb$ to $\Zb$ we will need the following general concept.
Let $X$ be an algebraic variety. By a {\em quasicoherent enrichment} of 
a category $\CC$ over $X$ we will mean
assigning to objects $M$, $N\in \CC$ an
object $\uHom(M,N)\in QCoh(X)$ together with an isomorphism $Hom(M,N)=\Gamma(\uHom(M,N))$
and maps $\uHom(M_1,M_2)\otimes_
{\O_{X}} \uHom(M_2,M_3)\to \uHom(M_1,M_3)$ satisfying the associativity constraint and 
compatible  with the composition of morphisms in $\CC$. 
If the quasicoherent sheaf $\uHom(M,N)$ is actually coherent for all $M,N\in \CC$ we say
that the enrichment is coherent.

\begin{Prop}\label{prop1}
The category $\Smb$ (respectively, $\Smb^{qc}(G)$) admits a natural lifts to a category coherent (respectively, quasicoherent) 
enrichment over $\Zbar$.
\end{Prop}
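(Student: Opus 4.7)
The plan is to construct the (quasi)coherent enrichment $\uHom_{\Zbar}(M,N)$ for $M,N\in\Smb^{qc}$ by étale descent along the cover of $\Zbar$ provided by the maps $\pi_P\colon \Z^+(L_P)\to\Zbar$ of Proposition~\ref{L1}(b), as $P$ runs over (representatives of conjugacy classes of) standard parabolics. For each such $P$, the modules $M_P,N_P\in Sm(L_P^+)$ make $Hom_{L_P^+}(M_P,N_P)$ into a module over the center $Z^+(L_P)$ of $\H(L_P^+)$, and so define a quasicoherent sheaf $\F_P(M,N)$ on $\Z^+(L_P)$. In the coherent case $M,N\in\Smb$, finite generation of $M_P$ and an adaptation of the Bernstein-center finiteness theorem for $Sm(L_P)$ to the semigroup $L_P^+$ yields coherence of $\F_P(M,N)$.

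The main step is to exhibit compatibility of the local sheaves across the scheme maps $c_P^Q\colon \Z^+(L_P)\to \Z^+(L_Q)$ of Proposition~\ref{L1}(c). For $P\subset Q$, functoriality of the Jacquet functor $J^Q_P$ in tandem with the structure isomorphism \eqref{isom_struk} provides a natural $Z^+(L_Q)$-linear map from $Hom_{L_Q^+}(M_Q,N_Q)$ into a $Hom$-space involving $M_P,N_P$ and the base-change algebra $\H(L_P^{Q+})$; the central technical claim is that after restriction to the Zariski open $U_{P,Q}\subset\Z^+(L_P)$ which $\pi_P$ sends into a common étale neighborhood of $\Zbar_Q$ inside the $\pi_Q$-chart, this map induces an isomorphism $(c_P^Q)^*\F_Q(M,N)\iso \F_P(M,N)|_{U_{P,Q}}$. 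Under Proposition~\ref{L1}'s toric description, $U_{P,Q}$ corresponds to a face of the cone defining $\overline{\LT}$, so this identification matches the algebraic fact that Jacquet-localization of the Bernstein center corresponds to passing to a toric chart.

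Granting pairwise compatibility, the associativity constraint built into Definition~\ref{sm_def} for triples $P_1\subset P_2\subset P_3$ together with $c_{P_1}^{P_3}=c_{P_2}^{P_3}\circ c_{P_1}^{P_2}$ supply the descent cocycle. Étale descent then produces a quasicoherent sheaf $\uHom_{\Zbar}(M,N)$ on $\Zbar$ with $\pi_P^*\uHom_{\Zbar}(M,N)\cong \F_P(M,N)$, coherent in the $\Smb$ case. The remaining enrichment data — associative composition maps $\uHom(M_1,M_2)\otimes_{\O_{\Zbar}}\uHom(M_2,M_3)\to\uHom(M_1,M_3)$, units, and the identification $\Gamma(\uHom(M,N))=Hom_{\Smb^{qc}}(M,N)$ — descend from their evident counterparts on each chart, using that by Definition~\ref{sm_def} a morphism in $\Smb^{qc}$ is precisely a system of $L_P^+$-linear maps compatible with \eqref{isom_struk}.

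The main obstacle is the pairwise descent isomorphism of the second paragraph; everything else is either formal or covered by Proposition~\ref{L1}. This amounts to a version of Bernstein's second adjointness in the semigroup setting for $L_P^+\subset L_Q^+$: the natural comparison from $Hom_{L_Q^+}$ to $Hom$ of the Jacquet modules should become an isomorphism after localizing the Bernstein center to the open identified by Proposition~\ref{L1} with a neighborhood of $\Zbar_Q$ inside the chart $\Z^+(L_P)$. Unwinding this dictionary between the toric geometry of $\overline{\LT}$ and the semigroup algebra $\H(L_P^+)$ is the technical heart of the proof, and it is where one has to use the specific form of the structural isomorphism \eqref{isom_struk} beyond its formal consequences.
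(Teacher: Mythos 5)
Your strategy — construct local sheaves of $\mathrm{Hom}$'s on the charts $\Z^+(L_P)$, glue across the maps $c_P^Q$, and descend — is the same as the paper's. But two points deserve comment.

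First, you invoke ``\'etale descent along the cover $\pi_P\colon \Z^+(L_P)\to\Zbar$'', and this framing is not quite right: Proposition~\ref{L1}(b) only asserts that $\pi_P$ is \'etale on a Zariski neighborhood of the stratum $\Zbar_P$, not globally, and the $\pi_P$'s do not form an \'etale cover of $\Zbar$ in the usual sense (each $\pi_P$ lands in the open star $U_P$ of the stratum $\Zbar_P$, not in all of $\Zbar$). Standard \'etale descent therefore does not directly apply. This is exactly what Lemma~\ref{L2} in the paper is designed to fix: it is a descent statement for stratified schemes with chart maps that are \'etale only near their target stratum, proved by induction on the strata and a local-cohomology computation. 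Your proposal would need to state and prove something equivalent; without it the final ``\'etale descent then produces a quasicoherent sheaf'' step has a genuine gap.

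Second, you flag as the ``technical heart'' the compatibility isomorphism $(c_P^Q)^*\F_Q \cong \F_P|_{U_{P,Q}}$, framing it as a semigroup version of second adjointness that must be verified. The paper instead treats this as immediate from Definition~\ref{sm_def}: the structure isomorphisms \eqref{isom_struk} \emph{are} the gluing data, not something to be re-derived, and the associativity constraint in the definition supplies the cocycle condition. Your instinct that some unwinding is required to pass from \eqref{isom_struk} to an isomorphism of $\mathrm{Hom}$-sheaves is reasonable (one needs to observe that $\H(L_Q^{P+})\otimes_{\H(L_Q^+)}(-)$ is a flat localization matching the relevant Zariski open, and that $J^P_Q$ composed with it behaves correctly on $\mathrm{Hom}$), but you have not actually carried it out, and the argument does not, as you suggest, rest on a second-adjointness theorem that lies outside the definition. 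A minor additional inaccuracy: $Z^+(L_P)$ is defined in the paper as the subalgebra of the Bernstein center $Z(L)$ of distributions supported on $L^+_P$, not literally as the center of $\H(L_P^+)$.

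So: right shape, but the decisive input — the stratified descent Lemma~\ref{L2} — is missing, and the compatibility step is asserted but neither reduced to the definition as the paper does nor proved.
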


\begin{Cor}
The categories $\Smb$, $\Smb^{qc}$ split as a direct sum indexed by components of $\Z$. \qed
\end{Cor}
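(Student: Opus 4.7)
The plan is to deduce the splitting directly from the quasicoherent enrichment over $\Zbar$ provided by Proposition~\ref{prop1}. Recall that $\Zbar=\wt{\Zbar}/W$ with $\wt{\Zbar}=\cupl_L \overline{\LT}\times^{\LT}\Ztil_L$, and each connected component of $\Ztil_L$ has the form $\LT/A$ for a finite subgroup $A\subset \LT$. Since the toric variety $\overline{\LT}$ is connected, the corresponding component $\overline{\LT}/A$ of $\wt{\Zbar}_L$ is connected and contains $\LT/A$ as a dense open subset. Passing to the $W$-quotient, the set of connected components of $\Zbar$ is identified with the set of $W$-orbits of components of $\Ztil$, which is exactly the set of connected components of $\Z$.

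Next, for each connected component $X$ of $\Zbar$ let $e_X\in \Gamma(\Zbar,\O_{\Zbar})$ be the associated idempotent, so that $e_Xe_Y=\delta_{XY}e_X$ and the $e_X$'s sum to $1$ locally. For $M\in\Smb^{qc}$, the enrichment supplies an identification $\mathrm{End}(M)=\Gamma(\Zbar,\uHom(M,M))$, under which the action of $e_X$ on $\mathrm{id}_M$ yields a central idempotent $\pi_X\in\mathrm{End}(M)$. I would define $M_X:=\mathrm{Im}(\pi_X)$ and verify, using the orthogonality relations and the compatibility of the enrichment with composition, that $M=\bigoplus_XM_X$ functorially in $M$, and that $\mathrm{Hom}(M_X,N_Y)=0$ for $X\neq Y$ because $\uHom(M_X,N_Y)$ is a module over $e_X\O_{\Zbar}\otimes_{\O_{\Zbar}}e_Y\O_{\Zbar}=0$. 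This realizes $\Smb^{qc}$ as the direct sum of its full subcategories $\Smb^{qc}_X$ of objects supported on $X$.

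When $M$ lies in $\Smb$, the sheaf $\uHom(M,M)$ is coherent by Proposition~\ref{prop1} and therefore has quasicompact support, so $\pi_X=0$ for all but finitely many $X$ and the decomposition $M=\bigoplus_XM_X$ is a finite direct sum in $\Smb$. The only nontrivial point in this argument, and the one I would expect to require the most care, is the identification of $\pi_0(\Zbar)$ with $\pi_0(\Z)$: this amounts to checking that the toric compactifications $\overline{\LT}$ are connected and that the $W$-action on $\wt{\Zbar}$ respects the stratification of Proposition~\ref{L1}, so that it permutes the components of $\wt{\Zbar}$ in the same manner as it permutes those of $\Ztil$. Once this bookkeeping is in place, the remainder of the corollary is a formal consequence of the enrichment.
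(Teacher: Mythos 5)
Your proof is correct and is essentially the argument the paper leaves implicit behind the ``$\qed$'': the quasicoherent enrichment over $\Zbar$ from Proposition~\ref{prop1} immediately hands you orthogonal central idempotents indexed by $\pi_0(\Zbar)$, and the construction of $\Zbar$ as a componentwise toric compactification of $\Z$ gives $\pi_0(\Zbar)=\pi_0(\Z)$. The one step you wave at rather than prove -- that $M=\bigoplus_X M_X$ even for $M\in\Smb^{qc}$ with infinitely many nonzero blocks -- is harmless here: it follows from smoothness (every vector is fixed by a compact open subgroup, so each element is killed by all but finitely many $\pi_X$), exactly as in the usual Bernstein decomposition; it would be worth saying so explicitly.
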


Before proceeding to prove the Proposition we state a general elementary Lemma.

%By a naive $n$-simplicial object in a category $\CC$ we understand a functor from the
%poset of subsets of $[0,..,n]$ to $\CC$. A Cartesian quasi-coherent sheaf on a simplicial scheme is defined in the usual way.

\begin{Lem}\label{L2}
Let $X=\coprod X_i$ be a scheme with a fixed stratification (i.e. the closure of $X_i$ 
coincides with $\coprod _{j\leq i} X_j$ for some partial order $\leq $ on the set $I$ of strata).
Set $U_i=\coprod _{j\geq i} X_j$, this is an open subset of $X$.
Suppose that for each $i$ we are given a map $u_i:Y_i\to U_i$, such that 

i) $X_i\times _X Y_i\iso X_i$ 

ii) $u_i$ is etale over a Zariski neighborhood of $X_i$.

iii) For $j\leq i$ set $Y_{ji}=Y_j\times _X U_i$.
Then the map $Y_{ji}\to U_i$ factors through a map $u_{ji}:Y_{ji}\to Y_i$. 
Moreover, for $k<j<i$ the map $Y_{ki}\to U_j\supset U_i$ factors through a map
$u_{kji}: Y_{ki}\to Y_{ji}$.

%Then the covering $Y=\coprod Y_i\to X$ satisfies descent for quasi-coherent sheaves, i.e. 
Let $Y_d=\coprod_{i_1<\dots < i_d} Y_{i_1i_d}$.
% Y_{i_1}\times_X Y_{i_2} \times_X \cdots \times_X
%Y_{i_d}$. 
Then the diagram $Y_3 \mathrel{\substack{\textstyle\rightarrow\\[-0.6ex]
                      \textstyle\rightarrow \\[-0.6ex]
                      \textstyle\rightarrow}} Y_2
                      \rightrightarrows Y_1\to X$
 satisfies descent for quasicoherent sheaves, i.e. 
$QCoh(X)$ is equivalent to the category of quasicoherent sheaves on $Y_1$ with isomorphisms
of the two pull-backs to $Y_2$ whose pull-backs to $Y_3$ satisfy the natural compatibility. 

 %Let $Y_\bu$ be a naive $n$-simplicial scheme over a scheme $X$. Suppose that there
 %exists a stratification $X=\coprod X_I$ indexed by the poset of subsets in $[0,..,n]$,
 %so that the map $Y_I\to X$ is etale over a neighborhood of $X_I$ for every 
 %$I\subset [0,..,n]$, while the map $X_I\times _X Y_I\to X_I$ is an isomorphism.
 
% locally closed subschemes $U_I\supset Z_I$, $I\subset [0,..,n]$ in $X$ such that
 
% i) $U_I$ is open in $X$ and $Z_I$ is closed in $U_I$.
 
 %ii) $Y_I\to X$ is etale over $Z_I$.
 
% iii) $Z_I\times _X Y_I\iso Z_I$.

%iv) $\coprod Z_I =X$ 

%Then $QCoh(X)$ is equivalent to the category of Cartesian sheaves over $Y_\bu$.
\end{Lem}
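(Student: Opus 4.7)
My plan is to prove the lemma by induction on the number $n$ of strata, combining standard etale descent for quasi-coherent sheaves with Zariski descent along an open cover.

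For the base case ($n=1$), we have $U_1 = X$, and conditions (i) and (ii) together with etaleness of $u_1$ near the tautological section force $Y_1 \cong X$, so descent is trivial. For the inductive step, I would pick an index $i_0$ such that $X_{i_0}$ is closed in $X$ (a minimal element with respect to $\leq$) and set $X^\circ := X \setminus X_{i_0}$. The restricted data $\{u_i|_{X^\circ}\}_{i \neq i_0}$ (replacing $Y_i$ by $Y_i \times_X X^\circ$ where necessary) satisfies the hypotheses of the lemma for the induced stratification of $X^\circ$ of length $n-1$, so by induction $QCoh(X^\circ) \simeq \operatorname{Desc}(Y^\circ_\bullet/X^\circ)$.

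By condition (ii) applied to $i_0$, there exist a Zariski open $W \subset X$ containing $X_{i_0}$ and an open subscheme $V \subset u_{i_0}^{-1}(W) \subset Y_{i_0}$ such that $u_{i_0}|_V \colon V \to W$ is etale. Surjectivity of this map follows from (i), since the tautological section $X_{i_0} \hookrightarrow Y_{i_0}$ lies inside $V$ and etale maps are open. Classical etale descent then gives $QCoh(W) \simeq \operatorname{Desc}(V/W)$. Since $\{W, X^\circ\}$ is a Zariski open cover of $X$, the category $QCoh(X)$ identifies with the $2$-fiber product of $QCoh(W)$ and $QCoh(X^\circ)$ over $QCoh(W \cap X^\circ)$. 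The main task is then to identify $\operatorname{Desc}(Y_\bullet/X)$ with this $2$-fiber product: the gluing datum over $W \cap X^\circ$ should be supplied by the factorization maps $u_{i_0, i} \colon Y_{i_0, i} \to Y_i$ for $i > i_0$, together with the higher compatibilities $u_{i_0, j, i}$ which encode the requisite cocycle conditions.

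The main obstacle will be the combinatorial bookkeeping associated with the asymmetric fiber products $Y_{ji} = Y_j \times_X U_i$ (featuring $U_i$, rather than $Y_i$, in the second factor). The crucial substep is to show that inside the etale loci, the natural maps $Y_{ij} \times_{Y_j} Y_{jk} \to Y_{ik}$ induced by the compatibilities are isomorphisms---this is what allows the ``triangular'' data indexed by chains $i_1 < \cdots < i_d$ to recover the symmetric descent data needed in the standard formulation of fppf descent. Once this point is established, the verification that $\operatorname{Desc}(Y_\bullet/X)$ is the required $2$-fiber product becomes a routine but somewhat involved check of simplicial identities tying together the inductive gluing on $X^\circ$ with the etale descent on $W$.
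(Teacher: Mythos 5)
Your inductive skeleton matches the paper's: both arguments peel off a closed stratum $X_{i_0}$ and reduce to gluing the inductive answer on $X^\circ = X\setminus X_{i_0}$ with local data near $X_{i_0}$. Where the two part ways is in how that gluing is carried out. The paper, having reduced to two strata and shrunk $Y_0$ so that $u_0$ is \'etale, reduces descent to exactness of the complex
$$0\to \O\to (Y_0\to X)_*\O \oplus (Y_1\to X)_*\O\to (Y_{01}\to X)_*\O\to 0,$$
checks exactness over the open stratum directly, and checks it on $X_0$ via local cohomology with support, boiling the whole thing down to the comparison $\alpha^!\O\iso\beta^!\O$ furnished by (i) and (ii). You instead phrase the gluing as a $2$-fiber product $QCoh(W)\times_{QCoh(W\cap X^\circ)}QCoh(X^\circ)$ and invoke classical \'etale descent on $W$ plus the inductive hypothesis on $X^\circ$; this is closer in spirit to the paper's Remark treating the analytic-topology case via a genuine open cover $(Y_i^o)$.

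Two issues. First, your appeal to ``classical \'etale descent'' for $V\to W$ requires a gluing isomorphism on $V\times_W V$, but the descent data in the lemma are \emph{triangular}: $Y_2=\coprod_{i<j}Y_{ij}$ carries no $Y_i\times_X Y_i$ component, so the cocycle on $V\times_W V$ is not part of the given data. The repair is exactly what makes conditions (i) and (ii) jointly useful: an \'etale map which is bijective over $X_{i_0}$ restricts, after shrinking, to an \emph{open immersion} on a neighborhood of $X_{i_0}$, so $V\times_W V=V$ and no extra cocycle is needed. You should invoke this explicitly, since without it the $2$-fiber product you write down does not receive a functor from $\operatorname{Desc}(Y_\bullet/X)$. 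Second, the ``crucial substep'' you single out --- that $Y_{ij}\times_{Y_j}Y_{jk}\to Y_{ik}$ is an isomorphism --- is in fact an unconditional tautology (it unwinds to $Y_i\times_X U_j\times_X U_k=Y_i\times_X U_k$ since $U_k\subset U_j$), so it is not where the difficulty sits; the genuine work is the point above, together with the compatibility of the two restriction functors over $W\cap X^\circ$, which you defer as ``routine but somewhat involved.'' That deferred verification is precisely what the paper's exact-complex computation replaces, so as written your proposal is a plausible alternative route but stops short of the step that carries the content.
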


\begin{Rem}
To fix ideas let us first prove the Lemma for sheaves in the analytic topology assuming we work over the base field  $\Ce$. Then it is easy to see that we can find an open subset $Y_i^o
\subset Y_i$ for each $i$ so that $Y_i^o$ maps isomorphically to a neighborhood of $X_i$
in $X$. Moreover, we can arrange it so that the images of $Y_i^o$ and $Y_j^o$ have a nonempty 
intersection only if $i\leq j$ or $j\leq i$. Replacing $Y_i$ by $Y_i^o$ does not affect the category
of gluing data; however, $(Y_i^o)$ is just an open covering of $X$, so the claim is clear.
\end{Rem}

\subsubsection{Proof of Lemma \ref{L2}} Let $X_0$ be a closed stratum. Running an inductive argument, we can assume the theorem
is known for the stratified space $X\setminus X_0$. Then we are reduced to proving the claim in the situation
when the stratification consists of two strata $X=X_0\coprod X_1$. Replacing $Y_0$ by its open subset
containing $X_0$ clearly does not affect the category of descent data, so we can assume without
loss of generality that $Y_1\to X$ is etale. By a standard argument the claim reduces to exactness
of the complex of sheaves on $X$:
$$0\to \O\to (Y_0\to X)_*(\O) \oplus (Y_1\to X)_*(\O)\to (Y_{01}\to X)_*(\O)\to 0.$$
The complex is clearly exact over $X_1$, so it is enough to show that local cohomology of this complex
with support on $X_1$ vanishes. This reduces to showing that $\alpha^!(\O)\iso \beta^!(\O)$,
where $\beta:X_0\to X$, $\alpha_1:X_0\to Y_0$. This follows from conditions (i), (ii). \qed 

\subsubsection{Proof of Proposition \ref{prop1}} 
Proposition  \ref{L1} implies that $X=\Zbar$ with the stratification of Proposition \ref{L1}(a), 
and $Y_i=\Z(L_i^+)$  satisfy the conditions of Lemma \ref{L2}. It is easy to see from  the definition of 
$\Smb^{qc}$ that the collection of quasi-coherent sheaves $Hom(M_P,N_P)$ provides gluing
data described in Lemma \ref{L2}. Also, for $M,\, N\in \Smb$ the module $Hom(M_P,N_P)$ is
finitely generated, so the resulting quasicoherent sheaf is in fact coherent. $\qed$

\subsection{The spectral description of $\Smb$}
\label{sp_des}

Recall that for every component $X\subset \Z$ the choice of a
sufficiently small nice (in the sense of \cite{centre}) open compact subgroup $K\subset G$ defines a
coherent sheaf of algebras $\A=\A_X(K)$ on $X$ with an equivalence between
$\A_X(K)$ modules and the corresponding summand in smooth $G$-modules.
%We can and will assume that $K\subset G(O)$ (recall that $G$ is assumed to be split).

Let $\fB$ be the (reductive) Bruhat-Tits building of $G$. We fix a special vertex $x\in \fB$
and let $K_x\subset G$ denote the corresponding maximal compact subgroup.
 % [?] {\bf any conditions on the vertex?}; it also defines an $O$-form of $G/U_P$ for any parabolic
%$P=L_PU_P$ and on its affine closure $\overline{ G/U_P}$. We write $G(O)_x$, $(G/U_P)(O)_x$,
%$\overline{(U_P\bs G)}(O)_x$ for the corresponding group (respectively, set) of  $O$-points; 

We also fix a maximally split Cartan subgroup $T$, such that the corresponding apartment $A_T\subset \fB$ contains $x$.
Thus $A_T$ is an affine space with underlying vector space
$V=X_*(T)\otimes \RE$. 
Let $V_+\subset A_T$ be  a Weyl cone with vertex at $x$
and  $B$ the corresponding minimal parabolic subgroup.
 
%For a parabolic $P\supset B$
%we let $\fB^+_x(P)$ be the image of $P'\times (x+V_+)$ under the map $(u,a)\mapsto u(a)$, where
%$P'$ denotes the derived group. 

It follows from the Iwasawa decomposition (see e.g. \cite[\S 3.3.2]{Tits}) that we have a natural
bijection 
\begin{equation}\label{Iwa}
P'\backslash G/K_x \cong \check{\Lambda}_P,
\end{equation}
where $P'$ denotes the commutator group. Let $G^+(x,P)$ be the union of cosets corresponding to 
elements in $\check{\Lambda}_P^+\subset \check{\Lambda}_P$. 

Let $(U_P\backslash G)^+(x)$ be the image of $G^+(x,P)$ in $U_P\backslash G$ and $\mcS^+(U_P\backslash G)(x)\subset
\mcS(U_P\backslash G)$ be the subspace of functions whose support is contained in $(U_P\backslash G)^+(x)$.

\begin{Prop} 
a) The left action of $L$ on $\mcS(U_P\backslash G)$ restrict to an $L^+_P$ action on
$\mcS^+(U_P\backslash G)(x)$. 

b) Let $K\subset K_x$ be an open subgroup. The $\H(L_P^+)$-module $\mcS^+(U_P\backslash G)(x)^K$
is finitely generated and projective.

c) There exists a unique object $\PP(x)^K\in \Smb$ such that $\PP(x)^K_P=\mcS^+(U_P\backslash G)(x)^K$
while the isomorphism \eqref{isom_struk} for $P=G$ comes from the natural arrow  $\mcS^+(U_Q\backslash G)(x)^K \to \mcS(U_Q\backslash G)^K
=J^G_Q (\mcS(G)^K)$.

d) For every open subgroup $K\subset K_x$ the object $\PP(x)^K $ is locally projective, % and compact.
i.e. the functor $\uHom(\PP(x)^K,\ \ )$ is exact.

e) 
%If $K$ is nice in the sense of \cite{centre} then the restriction of $\PP_x^K$ to a component $X$ of $\Zbar$ is either zero or a 
For every component $X$ there exists an open subgroup $S\subset K_x$ such that for any open subgroup $K\subset S$
the object $\PP_x^K$ is
local generator of the corresponding summand $\Smb_X\subset \Smb$. The latter property means that $\uHom(\PP_\sigma(x)^K, \ \ )$ is conservative
i.e. it $\uHom(\PP_\sigma(x)^K, \MM)\ne 0$ for $0\ne\MM \in \Smb_X$.
\end{Prop}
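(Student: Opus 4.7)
For (a), the Iwasawa bijection \eqref{Iwa} lets us write $g=p'\lambda k\in G^+(x,P)$ with $p'\in P'$, $\lambda$ representing a class in $\check\La_P^+$, and $k\in K_x$; then for $l\in L_P^+$, normality of $P'$ in $P$ gives $lg=(lp'l^{-1})(l\lambda)k$, and since $[l]+[\lambda]\in\check\La_P^+$ we conclude $lg\in G^+(x,P)$. For (b), combining $G=PK_x$ with \eqref{Iwa} provides an $L_P^+$-equivariant presentation of $\mcS^+(U_P\backslash G)(x)^K$ as a finite direct sum of compactly induced modules $\H(L_P^+)\otimes_{\H(J_i)}V_i$, where each $J_i$ is a compact open subgroup of $L^0$ and $V_i$ is a finite-dimensional smooth $J_i$-module; these are finitely generated and projective over $\H(L_P^+)$ because $V_i$ is projective over the Hecke algebra of a compact open group.

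For (c), the decomposition $U_Q=U_P\ltimes \bar U_Q$ shows $\bar U_Q$-coinvariants of $\mcS(U_P\backslash G)$ equal $\mcS(U_Q\backslash G)$, and one checks that the image of $G^+(x,P)$ in $U_Q\backslash G$ is exactly $L_Q^{P+}\cdot (U_Q\backslash G)^+(x)$; this yields the isomorphism \eqref{isom_struk} by semigroup base change along $\H(L_Q^+)\subset\H(L_Q^{P+})$. The case $P=G$ is then tautological, and associativity for a triple $P_1\supset P_2\supset P_3$ is inherited from the two-step decomposition $U_{P_3}=U_{P_1}\ltimes(U_{P_3}\cap L_{P_1})$. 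Part (d) is immediate: projectivity from (b) makes $\uHom(\PP(x)^K,\MM)$ exact on each etale chart $\mathcal V_L\{Q\}/W_M$ over $\Zb$ supplied by Proposition \ref{L1}(b), and exactness glues across $\Zb$ by Lemma \ref{L2}.

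For (e), shrinking $K$ inside a suitable open subgroup $S$, Bernstein's theorem makes $\mcS(G)^K=\PP(x)^K_G$ a projective generator of the summand $Sm(G)_X\subset Sm(G)$ indexed by $X\cap\Z$; one propagates this statement across the boundary by showing that $\PP(x)^K_P$ is a local generator of the relevant summand of $Sm(L_P^+)$ near $\Zb_P$, so that $\uHom(\PP(x)^K,\MM)=0$ forces $\MM_P=0$ stratum by stratum and hence $\MM=0$. The principal obstacle is (c): proving that the base-change map $\H(L_Q^{P+})\otimes_{\H(L_Q^+)}\mcS^+(U_Q\backslash G)(x)^K\to J^P_Q\mcS^+(U_P\backslash G)(x)^K$ is an isomorphism rather than merely a surjection, uniformly across all nested pairs while maintaining associativity for triples, requires a careful geometric match of the positive regions $G^+(x,P)$ under the Jacquet functor. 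A secondary obstacle is the boundary-stratum analogue of Bernstein's progenerator statement in (e), which calls for identifying $\uHom(\PP(x)^K,\PP(x)^K)$ localized along $\Zb_P$ with the appropriate localization of Bernstein's sheaf $\A_X(K)$ via second adjointness for the semigroup $L_P^+$.
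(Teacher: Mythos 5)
Your treatment of parts (a), (b), and (d) is essentially the paper's own argument: (a) follows from the Iwasawa bijection \eqref{Iwa} intertwining the $L$-action with translations on $\check\La_P$; (b) from the direct-sum decomposition of $\mcS^+(U_P\backslash G)(x)^K$ indexed by $P\backslash G/K$, each summand being a copy of $\mcS(L_P^+)^{K_L}$; and (d) is an immediate consequence of (b), since projectivity of each $M_P$ over $\H(L_P^+)$ makes $\uHom(M_P,\ \cdot\ )$ exact on each chart of the covering from Proposition \ref{L1}(b).

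For part (c), however, you've identified the correct difficulty but missed the idea that disposes of it, which is the crux of the paper's proof. You try to verify the isomorphisms \eqref{isom_struk} directly for \emph{every} nested pair $Q\subset P$ (via the semidirect decomposition of $U_Q$ and a geometric match of positive cones), together with the associativity for triples, and you honestly flag that you cannot push the resulting base-change maps from surjectivity to bijectivity. The paper sidesteps all of this: it observes that when each $M_P$ is torsion free as a module over $Z^+(L_P)$, the isomorphisms \eqref{isom_struk} for arbitrary pairs $Q\subset P$ are \emph{uniquely determined} by the ones for $P=G$ alone (if they exist), because a morphism between torsion-free modules is determined by its restriction to a dense open where the structure is already pinned down, and associativity then comes for free. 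Since the modules $\mcS^+(U_P\backslash G)(x)^K$ are visibly torsion free over $Z^+(L_P)$ by (b), one is reduced to exhibiting a single isomorphism for $P=G$, and this is furnished by the natural inclusion coming from $G^+(x,P)\subset G^+(x,Q)$ for $P\subset Q$. This is the missing key: it converts the compatibility problem you flag as "the principal obstacle" into a uniqueness statement that costs nothing. Without it your argument for (c) is genuinely incomplete.

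For (e) your plan is in the right spirit but again more laborious than necessary and left unfinished (you flag the "secondary obstacle" of identifying $\uHom(\PP(x)^K,\PP(x)^K)$ along boundary strata with a localization of Bernstein's sheaf via second adjointness). The paper instead deduces (e) directly from the corresponding progenerator statement for $\H_K$ proved in \cite{centre}: conservativity of $\uHom(\PP(x)^K,\ \cdot\ )$ needs only to be checked stratum by stratum, and on each stratum it reduces to Bernstein's theorem applied to the relevant Levi. Once you have the torsion-free uniqueness lemma for (c), this reduction to \cite{centre} becomes routine and the second-adjointness digression is unnecessary.
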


\proof a) Bijection \eqref{Iwa} intertwines  the left action of $L$ on $P'\backslash G/K_x$  with the action of  $\check{\Lambda}_P= L/L^0$ on itself by translations,
this implies part a). 
The space $\mcS^+(U_P\backslash G)(x)^K$ splits as a direct sum indexed by $P\backslash G/K$, each summand is isomorphic to 
the space $\mcS(L_P^+)^{K_L}$ for some open compact subgroup $K_L\subset K$, this implies b).

Notice that given modules $M_P$ and isomorphisms \eqref{isom_struk} for $P=G$ as in Definition \ref{sm_def}, the rest of the isomorphisms
\eqref{isom_struk} satisfying the requirements of the Definition are defined uniquely (if they exist) provided that each module
$M_P$ is torsion free as a module over $Z^+(L_P)$. This implies uniqueness in c). Existence follows from the fact that
 $G^+(x,P)\subset G^+(x,Q)$ for parabolic subgroups $P\subset Q$.

Statement d) follows from b). 

Finally e) follows from the corresponding statement about $\H_K$ established in \cite{centre}. 
\qed

 Set $\bA_X=\bA_X(x,K):=\uHom(\PP(x)^K,\PP(x)^K)|_X$, this is a sheaf of algebras on $X$.

 \begin{Cor}\label{algAx} Given a component $X\subset \Zbar$ for any small enough 
 %nice 
 open subgroup $K\subset K_x$ 
 we have a canonical equivalence between the category 
 of (quasi)coherent sheaves of $\bA_X(x,K)$-modules and the summand in $\Smb$
(respectively, $\Smb^{qc}$) corresponding to $X$.
 \end{Cor}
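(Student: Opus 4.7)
The plan is to apply a sheaf-theoretic Morita-type recognition theorem, using the four key properties of $\PP(x)^K$ established in the preceding Proposition: the coherent enrichment guarantees that $\bA_X = \uHom(\PP(x)^K,\PP(x)^K)|_X$ really is a coherent sheaf of algebras on $X$; finite generation and projectivity of each $\PP(x)^K_P$ as an $\H(L_P^+)$-module (part b); local projectivity, i.e.\ exactness of $\uHom(\PP(x)^K,-)$ (part d); and the local-generator property (part e). First I would fix $K$ small enough so that part (e) applies to the chosen component $X$, and then define
$$\Phi(\MM) := \uHom(\PP(x)^K,\MM)\bigr|_X,$$
a functor from $\Smb_X$ (resp.\ $\Smb^{qc}_X$) to coherent (resp.\ quasicoherent) $\bA_X$-modules. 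By (d) this functor is exact, and by (e) it is conservative, hence faithful.

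Next I would construct an inverse $\Psi(\mcF) := \mcF \otimes_{\bA_X} \PP(x)^K$, understood in the enriched sense, and show that $(\Psi,\Phi)$ forms an adjoint pair whose unit and counit are isomorphisms. Since all the assertions are local on $X$, I would reduce to an affine open $U = \mathrm{Spec}(R) \subset X$ on which $\bA_X$ is given by an $R$-algebra $A$; over $U$ the statement becomes the classical Morita theorem applied to the finitely generated projective generator $\PP(x)^K|_U$, giving an equivalence between $A$-modules and the sections of $\Smb_X$ over $U$. Canonicity of the Morita construction lets these local equivalences be glued to a global one. The verification that the unit and counit are isomorphisms reduces, using the local-generator and local-projectivity properties, to checking them on the generator $\PP(x)^K$ itself, where both become tautologies involving $\bA_X = \uEnd(\PP(x)^K)|_X$.

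For the quasicoherent version I would use that every object of $\Smb^{qc}_X$ is a filtered colimit of coherent objects (directly from Definition \ref{sm_def} applied componentwise), and that $\uHom(\PP(x)^K,-)$ commutes with filtered colimits by the finite-generation part of (b); consequently both $\Phi$ and $\Psi$ extend from the coherent equivalence to an equivalence of the quasicoherent categories.

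The main obstacle I anticipate is the gluing step: verifying that the locally defined Morita equivalences on affine opens $U_\alpha \subset X$ patch canonically to a global equivalence, and that the enriched tensor product $\Psi$ really lands in $\Smb$ (and not merely in $\Smb^{qc}$) when $\mcF$ is coherent. Both points ultimately reduce to standard coherence/finite-generation bookkeeping using that $X$ is noetherian and that each $\PP(x)^K_P$ is finitely generated projective over $\H(L_P^+)$, but care is required because the enrichment is over $\Zbar$ rather than over a single Bernstein component of $\Z$, so one must track compatibility with the gluing data of Lemma \ref{L2} used to construct $\Smb$ in Proposition \ref{prop1}.
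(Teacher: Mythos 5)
Your proposal is correct and follows the same route the paper intends: the Corollary is stated without explicit proof precisely because it is the standard enriched Morita recognition theorem, applied using the finite generation/projectivity (b), local projectivity/exactness (d), and local-generator/conservativity (e) of $\PP(x)^K$ established in the preceding Proposition, then glued over affine opens via the $\Zbar$-enrichment from Proposition \ref{prop1}. Your reduction to the affine Morita theorem and the filtered-colimit extension to the quasicoherent case are exactly the intended steps.
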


%For a nice open compact $K$ we  let $Sm_K$ denote the summand in $Sm$ consisting
%of modules generated by  $K$-invariant vectors and we let $\Smb_K$ be the corresponding
%summand in $\Smb$.

\subsection{Compactified category and filtered modules}\label{filmod}
For a simple root $\alpha$ let $P_\alpha$ be the corresponding maximal proper parabolic
and $\Zb_\alpha$ be the closure of the corresponding stratum in $\Zb$. It is easy to see
that $\Zb_\alpha$ is a divisor. For a weight $\lambda$ set $D_\la=\sum_\alpha \langle \check{\alpha}
,\la\rangle [\Zb_\alpha]$ where the sum runs over the set of simple roots, let also $\O(\la)=\O_{\Zb}(D_\la)$
and $\F(\lambda)=\F\otimes_{\O(\Zb)}\O(\la)$.

A coherent sheaf $\F$ on $\Zbar$ is determined by the graded module $\oplus _\la \Gamma( \F(\la))$,
over the homogeneous coordinate ring  $\oplus \Gamma(\O(\la))$.

 If $\F$  is torsion free then the natural map $\F(\mu)\to \F(\la+\mu)$, $\la\in \Lambda^+$ is injective
 and $\bigcup\limits_\la \F(\la)=j_*j^*(\F)$ where $j:\Z\to \Zb$ is the embedding.
 Thus the category of torsion free coherent sheaves $Coh_{tf}(\Zb)$ admits
 a full embedding into the category of $\O(\Z)$-modules equipped with a filtration indexed 
 by $\Lambda^+$ compatible
 with the natural filtration on the ring $\O(\Z)$: to a sheaf $\F$ it assigns the module $\Gamma(j^*(F))$
 with the filtration by the subspaces $\Gamma(\F(\la))$. 
 
 Applying it to the sheaf of rings $\bA_X(x,K)$ we get a filtration on the Hecke algebra 
 $F_{\leq \lambda}^{spec}(\H_K)$.
  
Thus we obtain the following:

\begin{Prop}
We have a full embedding from the category  $\Smb^{tf}_K$
 of torsion free objects in $\Smb_K$ to the category of modules
 over $\H_K$ equipped with a filtration compatible with the filtration
 $F^{spec}$ on $\H_K$.
 \end{Prop}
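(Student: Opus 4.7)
My plan is to derive this essentially as a corollary of the two paragraphs preceding the statement, applied with $\O_{\Zb}$ replaced throughout by the coherent sheaf of algebras $\bA_X(x,K)$, and then identifying the source category with $\Smb^{tf}_K$ via Corollary \ref{algAx}.

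First, I fix a component $X\subset \Zb$ and an open subgroup $K\subset K_x$ small enough for Corollary \ref{algAx} to apply, so that the summand of $\Smb_K$ supported on $X$ is identified with coherent sheaves of $\bA_X(x,K)$-modules. Under this identification a torsion-free object $\MM\in \Smb^{tf}_K$ corresponds to a torsion-free coherent sheaf $\F_\MM$ of $\bA_X$-modules on $\Zb$. I then apply the functor $\F\mapsto \Gamma(j^*\F)$ together with its $\La^+$-filtration $F_\la := \Gamma(\F(\la))$, exactly as in the discussion preceding the statement. The resulting module over $\H_K = \Gamma(j^*\bA_X)$ carries a filtration compatible with $F^{spec}$ by construction, since $F^{spec}$ was defined by applying the same functor to $\bA_X$ viewed as a sheaf of modules over itself; the multiplication $\bA_X(\la)\otimes_{\O_{\Zb}} \F_\MM(\mu)\to \F_\MM(\la+\mu)$ yields on global sections the desired inclusion $F^{spec}_\la(\H_K)\cdot F_\mu\subset F_{\la+\mu}$.

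Next I establish fully faithfulness. The identity $\bigcup_\la \F(\la) = j_*j^*\F$ for torsion-free $\F$, noted just above the proposition, shows that the filtered $\H_K$-module $(\Gamma(j^*\F_\MM),F_\bu)$ determines $\F_\MM$ uniquely as a subsheaf of $j_*j^*\F_\MM$; this gives faithfulness (and essential injectivity). For fullness, an $\H_K$-linear map $\phi$ respecting the filtrations provides, for each $\la$, an $\O(\Z)$-linear map $\Gamma(\F_\MM(\la))\to \Gamma(\F_\mcN(\la))$; using compatibility with twists these assemble into a morphism of graded modules over the homogeneous coordinate ring $\bigoplus_\la \Gamma(\O_{\Zb}(\la))$, hence, by the standard Proj-type correspondence applied componentwise, into a morphism of coherent $\O_{\Zb}$-modules $\F_\MM\to \F_\mcN$. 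The $\H_K$-linearity of $\phi$, combined with the way $F^{spec}$ encodes the $\bA_X$-structure, upgrades this to a morphism of $\bA_X$-modules, i.e.\ to a morphism in $\Smb^{tf}_K$.

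The main obstacle I anticipate is this last fullness step: ensuring that the purely $\H_K$-linear, filtration-preserving map $\phi$ actually lifts to an $\bA_X$-module map on $\Zb$, not merely to an $\O_{\Zb}$-module map. This reduces to verifying that $F^{spec}_\la(\H_K)$ recovers $\Gamma(\bA_X(\la))$ and that multiplication within $\bA_X$ is captured by composition of such global sections. Both are consequences of the torsion-freeness of $\bA_X$ as a sheaf of modules over itself together with the embedding of $Coh_{tf}(\Zb)$ into filtered $\O(\Z)$-modules recalled in the excerpt.
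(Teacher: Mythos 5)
Your argument is correct and matches the paper's intended proof, which presents the Proposition as an immediate consequence of the two preceding paragraphs (the full embedding $Coh_{tf}(\Zb)\ho$ filtered $\O(\Z)$-modules, applied via Corollary \ref{algAx} to sheaves of $\bA_X$-modules). The concern you raise at the end resolves cleanly: a filtration-preserving $\H_K$-linear map is exactly a graded module map over the Rees ring $\bigoplus_\la F_{\leq\la}^{spec}(\H_K)=\bigoplus_\la\Gamma(\bA_X(\la))$, and the Proj-type correspondence over the central subring $\bigoplus_\la\Gamma(\O_{\Zb}(\la))$ already produces a morphism of sheaves of $\bA_X$-modules, so no separate ``upgrade'' step is needed.
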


 It is clear that  the associated graded of the filtered module
 in the image of such an embedding is finitely generated;
 recall that a filtration with this property is called a {\em good filtration}.

We also have the left adjoint functor $Loc$ from the category of 
 $\H_K$ modules with a good filtration to $\Smb_K$.

\subsubsection{A geometric description of the filtration} 
  We now provide a more explicit description of the filtration
  $F_{\leq \lambda}^{spec}(\H_K)$.

 Recall that the two sided cosets of $K_x$ in $G$ are indexed by $X_B^+$ (see e.g.
 \cite[\S 3.3.3]{Tits}),  for $\la\in X_B^+$
 let $G_\la$ denote the corresponding coset.
 Let $F^{geom}_{\leq \la}(\H_K)$ be the space of functions whose support
 is contained in $\cupl_{\mu \leq \la}G_\mu$. 
  
 \begin{Prop} 
\label{fifi} 
 For every %nice 
 open compact $K\subset K_x$ there exists
 $\la_0\in X_B^+$ such that:
\begin{equation}\label{Fspecgeom}
F_{\leq \lambda}^{spec}(\H_K) =\{h\in \H_K\ |\ \forall \mu \in \la_0+X_B^+ :\ \  h*F^{geom}_{\leq
\mu}(\H_K)\subset F^{geom}_{\leq \mu+\la}(\H_K).\}
\end{equation}
 \end{Prop}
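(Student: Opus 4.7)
The plan is to prove both inclusions by interpreting the spectral filtration as morphisms between twists of the local generator $\PP(x)^K$, using Corollary \ref{algAx} and the filtered-module perspective of Section \ref{filmod}. By Corollary \ref{algAx}, $h \in F^{spec}_{\leq \la}(\H_K)$ means exactly that $h$, viewed as an endomorphism of $\PP(x)^K$ via the action $\H_K = \Gamma(\bA_X(x,K)|_\Z)$, extends to a morphism $\PP(x)^K \to \PP(x)^K(\la)$ in $\Smb$, where the twist is by the line bundle $\O(\la)$ on $\Zb$.

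The first step is to give a geometric model of the twist $\PP(x)^K(\la)$ on each boundary stratum of $\Zb$. By Proposition \ref{L1}(b), for each simple root $\al$ the local chart $\Z^+(L_{P_\al}) \to \Zb$ is etale near $\Zb_{P_\al}$, and on this chart the twist by $\O(\la)$ corresponds to the unramified character of $L_{P_\al}$ obtained by restricting $\la$. Translated through the formula $\PP(x)^K_{P_\al} = \mcS^+(U_{P_\al}\backslash G)(x)^K$ and the Iwasawa decomposition \eqref{Iwa}, this twist shifts the $L_{P_\al}^+$-module structure on $\PP(x)^K_{P_\al}$ by the element of $\check\Lambda_{P_\al}^+$ determined by $\la$. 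Using Lemma \ref{L2} to glue, $h$ extends to a morphism into $\PP(x)^K(\la)$ iff for every parabolic $P\supset B$, the action of $h$ on $\PP(x)^K_P$ shifts the natural filtration coming from the submonoid $L_P^+ \subset L_P$ by at most $\la$.

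The second step is to identify this ``shift by $\la$'' condition with the convolution bound on the right-hand side of \eqref{Fspecgeom}. The Cartan decomposition $G = \bigsqcup_{\nu \in X_B^+} G_\nu$ identifies $F^{geom}_{\leq \mu}(\H_K)$ with the subspace of $\PP(x)^K_G = \mcS(G)^K$ spanned by $K$-biinvariant functions supported on $\bigsqcup_{\nu \leq \mu} G_\nu$; under the projection to $\PP(x)^K_P = \mcS^+(U_P\backslash G)(x)^K$ and the Iwasawa decomposition, this matches the support filtration on $\check\Lambda_P^+$. Hence the shift condition for the action of $h$ is equivalent to $h * F^{geom}_{\leq \mu}(\H_K) \subseteq F^{geom}_{\leq \mu+\la}(\H_K)$ for all $\mu$ which are sufficiently dominant that each $K$-double coset $G_\mu$ already ``sees'' all parabolic directions in the sense that its image in every $P\backslash G/K$ lies deep in $\check\Lambda_P^+$. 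The threshold defining $\la_0$ is precisely the bound beyond which the convolution with $K$-biinvariant functions respects each $L_P^+$-submonoid structure strictly, which depends only on the depth of $K$ (for example via the Cartan radius of a neighborhood of identity in $K_x$).

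The main obstacle is the careful calibration of $\la_0$: for small $\mu$ the convolution $h * F^{geom}_{\leq \mu}$ can have ``edge effects'' coming from $K$-double cosets near the walls of the chamber, and these do not reflect the genuine boundary behavior of $h$ as a section of $\bA_X$ on $\Zb$. One must show that these finite-rank edge contributions are absorbed once $\mu$ is shifted by a fixed $\la_0$ large enough so that, for every parabolic $P\supset B$, the image of $\mathrm{supp}(h)$ under right-multiplication by $G_\mu$-supported functions stays entirely inside $G^+(x,P)$-translates consistent with the gluing charts of Lemma \ref{L2}. This is a bookkeeping argument using finite generation of the relevant modules over $\H(L_P^+)$ from Proposition \ref{prop1}(b), together with finiteness of $P\backslash G/K$ for each $P$.
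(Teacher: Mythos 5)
Your starting point is sound: by Corollary \ref{algAx} and the discussion in Section \ref{filmod}, membership in $F^{spec}_{\leq\lambda}(\H_K)$ is indeed equivalent to the corresponding endomorphism of $\PP(x)^K$ extending to a morphism $\PP(x)^K\to\PP(x)^K(\lambda)$ over $\Zb$. But the two-step reduction that follows skips over the genuine technical content of this proposition, and the skipped step is not routine.

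The crux is the passage you describe as ``under the projection to $\PP(x)^K_P=\mcS^+(U_P\backslash G)(x)^K$ and the Iwasawa decomposition, this matches the support filtration on $\check\Lambda_P^+$.'' This sentence silently identifies the Cartan-decomposition support filtration on $\H_K$ (the source of $F^{geom}$) with the Iwasawa-decomposition support filtration on $\mcS(U_P\backslash G)^K$. These are not trivially compatible: pushing forward a function supported on $G_{\leq\mu}$ along the non-compact fibration $G\to U_P\backslash G$ does not obviously land in $(U_P\backslash G)_{\leq\mu'}$ with a controlled $\mu'$, and conversely. The paper's proof gets exactly this control from the second-adjointness machinery of \cite{BK}: after reformulating via the orispheric transform $A_P$ and the spaces $X_P$, one applies the Bernstein map $B_I$ of \cite[Definition 5.3]{BK}, and the needed support estimates are \cite[Lemma 5.5]{BK} (with \cite[Theorem 7.6]{BK} in the background identifying $I^{-1}A=B^\star$). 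Your argument has no substitute for this input; the ``bookkeeping'' you defer to at the end is precisely where the difficulty lives, and neither Proposition \ref{prop1}(b) nor finiteness of $P\backslash G/K$ supplies the required comparison of support bounds along $G\to U_P\backslash G$.

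There is a second gap in the uniformity of $\lambda_0$. You claim $\lambda_0$ depends only on the depth of $K$, but you need this threshold to work simultaneously for all $h\in F^{spec}_{\leq\lambda}(\H_K)$ and all $\lambda$, which is infinitely many conditions as $\lambda$ grows. The paper disposes of this by observing that the Rees ring $\oplus_\lambda F^{spec}_{\leq\lambda}(\H_K)$ is finite over its (finitely generated, commutative) center and hence finitely generated, so it suffices to verify the bound for finitely many generators $h_i$; for each $h_i$ one invokes \cite[Lemma 5.5]{BK} and then takes the max of the resulting thresholds. Your ``finite-rank edge effects'' heuristic does not deliver this uniformity: the constants degrade as $\lambda$ grows unless one controls them through finite generation of the Rees ring or an equivalent device, which your outline does not mention.
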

 
 {\em Proof\ } It follows from the definition that 
 
  \begin{equation}\label{Fspec}
 h\in F_{\leq \lambda}^{spec}(\H_K)\iff \forall P,\mu \ :\ \mcS((U_P\backslash G)_{\leq \mu})*h\subset  \mcS((U_P\backslash G)_{\leq {\bar{\la}_P} +\mu}),
  \end{equation}
 where ${\bar{\la}_P}$ is the image of $\lambda$ in $\check{\Lambda}_P$ and $(U_P\backslash G)_{\leq \mu}= \cupl_{\nu\leq \mu} (U_P\backslash G)_\nu$
 for the standard partial order $\leq $ on $\check{\Lambda}_P$; here $(U_P\backslash G)_\nu$ is the image of the coset corresponding
 to $\nu$ under bijection \eqref{Iwa}.
  )
 Let $X_P=(G/U_P\times G/U_{P^-})/L$. 
We have $K_x\backslash X_P/K_x
\cong \check{\Lambda}_P,$ let $(X_P)_\mu$ denote the two-sided coset corresponding to $\mu\in \check{\Lambda}_P$.  
 Then the  condition in the right hand side of \eqref{Fspec} is equivalent to:
 \begin{equation}\label{allPmu}
 \forall P,\mu \ :\ h*\mcS((X_P)_{\leq \mu})\subset \mcS((X_P)_{\leq \la +\mu}),
 \end{equation}
 where $(X_P)_{\leq \mu}= \cupl_{\nu\leq \mu} (X_P)_\nu$.
 this is clear by considering the projection $X\to G/P^-$ with fiber $G/U_P$.
 Assume now that $h$ lies in the set in the right hand side of \eqref{Fspecgeom}.
 Applying the map $B_I$ of \cite[Definition 5.3]{BK}  and using \cite[Lemma 5.5]{BK},
 we see that $h$ also satisfies \eqref{allPmu}. This shows that the right hand side of \eqref{Fspecgeom}
 is contained in the left hand side. 
 We proceed to check the opposite inclusion. The Rees ring 
 $\oplus_\lambda F_{\leq \lambda}^{spec}(\H_K)$
is finite over its center which is a finitely generated commutative ring, hence the Rees ring  is finitely generated.
 Thus it suffices to check that for a finite set of generators $h_i\in F_{\leq \lambda_i}^{spec}(\H_K)$
 we have:
 $$h_i *F^{geom}_{\leq
\mu}(\H_K)\subset F^{geom}_{\leq \mu+\la_i}(\H_K) \ \ \forall \mu\in \la_0+\La^+$$
for some $\lambda_0\in \Lambda^+$. Existence of such a $\lambda_0$ for a given
$h_i$ follows from \cite[Lemma 5.5]{BK}. Since we consider a finite set of $h_i$,
there exists $\lambda_0$ which satisfies the requirement for all $h_i$.
   \qed
 )
\begin{Rem}
It easily follows from the construction that $gr(F^{spec})$ is a Noetherian ring.

Notice that the  geometric filtration on $\H$ is also compatible with the algebra structure;
however, its associated graded is neither  Noetherian, nor finitely generated in general. 
%For example, if $G$ is a split group
This is closely related to the fact that the intertwining operator acting on the
space of functions over $\underline{(G/U)} (O/\pi^nO)$ is not an isomorphism for $n>1$
(here $\underline{(G/U)}$ is a scheme over $O$ coming from the $O$-group scheme with generic
fiber $G$ corresponding to $x\in \fB$).
\end{Rem}

\begin{example}\label{int_bim}
In view of Proposition \ref{fifi} the geometric filtration $F^{geom}$ on $\H$
makes it into a filtered  %bi
module over the filtered algebra $(\H,F^{spec})$.
Applying the functor $Loc$ to that filtered module we get an object
in $\Smb$.
%, which also naturally lifts to an object in the tensor square $\Smb^{\otimes 2}=\Smb(G^2)$.
 We denote it by $\Hb'$ and call it the  {\em intertwining}  object in $\Smb$. 
 \end{example}

%For future reference we record the following. 

\begin{Lem}
\label{XYLem} 
%a) 
%The regular bimodule $\Hb$ is the unique (up to a unique
%isomorphism) %torsion free 
%object in $\Sm(G^2)$ such that for every
%parabolic $P$ the $(L_P^+)^2$-module
%$\Hb_{P,P}$ is identified with $$\mcS(Y_P^+)\subset \mcSC(Y_P)=
%r^{G\times G}_{P\times P}(\H),$$ 
% where $Y_P=(G/U_P\times G/U_P)/L$
%and $Y_P+$ is the image of $K_x L_P^+ K_x$ in $Y_P$. 

%The space of sections of $\Hb$ over the formal neighborhood of the stratum
%on $\Zb$ corresponding to a parabolic $P$ is naturally isomorphic to the space
%of functions with bounded support on $Y_P^+$, where $Y_P=(G/U_P\times G/U_P)/L$
%and $Y_P+$ is the image of $K_x L_P^+ K_x$ in $Y_P$. 
%
%b) 
%The intertwining object $\Hb'$ is the unique (up to a unique
%isomorphism) %torsion free 
%object in $\Sm(G^2)$ such that for every
%parabolic $P$ the $(L_P^+\times L_P^-}^+)$-module
%$\Hb'_{P,P^-}$ is identified with $$\mcS(X_P^+)\subset \msC(X_P)=
%r^{G\times G}_{P\times P^-}(\H),$$ 
% where $X_P=(G/U_P\times G/U_P^-)/L$
%and $X_P+$ is the image of $K_x L_P^+ K_x$ in $X_P$. 
%The space of sections of $\Hb'$ over the formal neighborhood of the stratum
%on $\Zb$ corresponding to a parabolic $P$ is naturally isomorphic to the space
%of functions on $X_P(O)$, where $X_P=(G/U_P\times G/U_{P^-})/L$.
%
Assume that $G=SL(2)$. 

The intertwining object $\Hb'$ is equivalently described by $\Hb'_G=\H$,
$$\Hb'_B=\{f\in \mcS ( G/U)\  | \ supp(I^{-1}(f))\subset (G/U_-)^+\},$$
where $I^{-1}$ denotes the inverse {\em intertwining operator} taking
values in functions of bounded support.\footnote{Here by a bounded set
in $G/U_-=F^2\setminus \{0\}$ we mean a subset with compact
closure in $F^2$.}
%c) The isomorphism $\H\cong \Hb'|_\Z $ induces an isomorphism between
%the space of sections of $\H$ and $\Hb$ on the punctured formal neighborhood of the stratum
%on $\Zb$ corresponding to a parabolic $P$. This isomorphism
%coincides with the {\em inverse intertwining operator} from functions
%on $Y_P$ to functions on $X_P$, where $Y_P=(G/U_P\times G/U_P)/L$.
\end{Lem}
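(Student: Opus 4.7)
The plan is to compute $\Hb'_B$ directly from the definition $\Hb' = Loc(\H, F^{geom})$ and the spectral description of $\Smb$ from Example \ref{sl2ex}, then match this computation with the claimed formula. Since $\Hb'_G = \H$ holds by construction of the functor $Loc$, the entire content of the lemma is the identification of the $\H(T^+)$-module $\Hb'_B$ as the specified subspace of $\mcS(G/U) = J^G_B(\H)$.

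First I would verify that the proposed subspace $\Hb''_B := \{f\in \mcS(G/U) \mid \mathrm{supp}(I^{-1}(f))\subset (G/U_-)^+\}$ is a $T^+$-submodule of $\mcS(G/U)$ and that the natural map $\H(T)\otimes_{\H(T^+)} \Hb''_B \to \mcS(G/U)$ is an isomorphism. The $T^+$-stability follows from the $G$-equivariance of $I$ (so $I$ and $I^{-1}$ commute with $T$) together with the fact that the sub-semigroup $T^+\subset T$ preserves the bounded set $(G/U_-)^+\cong O^2\setminus\{0\}$ under the $T$-action on $G/U_-$ dictated by the intertwining identification. Surjectivity of the tensor product map follows because any $f\in \mcS(G/U)$ can, after shifting by a sufficiently deep $t\in T^+$ acting on $G/U_-$ via $I$, be made to have $I^{-1}$-support inside $(G/U_-)^+$. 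Thus the pair $(\H, \Hb''_B)$ together with the tautological isomorphism defines an object $\Hb''\in \Smb$.

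Second, I would construct a comparison morphism $\Hb' \to \Hb''$ by exhibiting the filtered module $(\H,F^{geom})$ as an object mapping to the filtered structure underlying $\Hb''$, and then invoke the adjunction defining $Loc$. More precisely, using the description from Example \ref{sl2ex}, $\Hb'_B$ arises as the localization of the Rees module $\oplus_n F^{geom}_{\leq n}(\H_K)$ over the Rees algebra $\oplus_n F^{spec}_{\leq n}(\H_K)$ near the boundary stratum $\partial\Z$ (pulled back to $\Zt^+$). The identification reduces to comparing, as $T^+$-stable subspaces of $\mcS(G/U)$, the image of the filtration $J^G_B(F^{geom}_{\leq n}(\H))$ with the image $I(\mcS_{\text{supp} \subset \pi^{-n}O^2\setminus\{0\}}(G/U_-))$.

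The main technical step is the explicit Bruhat-Tits calculation at the heart of this identification. For $SL(2)$, $F^{geom}_{\leq n}(\H)$ consists of functions supported on the double cosets $K_x\alpha(\pi)^m K_x$ with $m\leq n$, and under the projection $\H\to \mcS(G/U)$ these map, via the Iwasawa decomposition, to functions supported on the union of $K_x$-orbits on $G/U\cong F^2\setminus\{0\}$ of bounded norm. One checks that the intertwining operator $I$, realized as integration $I(\phi)(g)=\int_U \phi(gu\,w)\,du$ with $w$ the nontrivial Weyl element, converts a bounded function $\phi$ supported in $O^2\setminus\{0\}\subset G/U_-$ into a function on $G/U$ whose support is exactly of this form. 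The hard part will be keeping track of compatibility with the two polarizations and verifying that the resulting comparison of filtered $T^+$-modules matches the Rees construction term-by-term; this is where the asymmetry between the two unipotent radicals (and hence the appearance of the opposite parabolic $U_-$ in the formula for $\Hb'_B$) becomes essential. Once the filtration is matched, uniqueness in the Rees/$Loc$ construction forces $\Hb'\cong \Hb''$, completing the proof.
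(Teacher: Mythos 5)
Your plan has the right shape but it stops exactly where the proof has to begin. The reduction you set up in the first two steps (show the proposed right-hand side $\Hb''_B$ is a $T^+$-submodule generating $J^G_B(\H)$ after inverting, then compare with $Loc(\H,F^{\mathrm{geom}})$ via adjunction) is more elaborate than the paper needs --- since by \S\ref{filmod} an object of $\Smb$ without torsion on $\partial\Z$ is already determined by its filtered $\H$-module, the lemma reduces immediately to a comparison of filtrations: for $h\in\H_K$ and large $\lambda$, one must show
\[
h\in F^{\mathrm{geom}}_{\leq\lambda} \iff \mathrm{supp}\bigl(I^{-1}A_B(h)\bigr)\subset (X_B)_{\leq\lambda}.
\]
Your abstract apparatus is not incorrect, but it does not shorten the way to this statement.

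The real problem is that you then announce ``the hard part will be keeping track of compatibility \dots\ and verifying that the resulting comparison of filtered $T^+$-modules matches the Rees construction term-by-term'' and leave it there. That deferred step is not bookkeeping --- it \emph{is} the lemma. The paper closes this gap by citing \cite[Theorem 7.6]{BK}, which identifies $I^{-1}A$ with the Bernstein map $B^\star$, and \cite[Lemma 5.5]{BK}, which gives precisely the equivalence between the geometric support condition $\mathrm{supp}(h)\subset G_{\leq\lambda}$ and the condition $\mathrm{supp}(B^\star(h))\subset X_{\leq\lambda}$ for large $\lambda$. These two statements are the entire content of the assertion you label ``the hard part.'' Without invoking them (or rederiving them, which is a substantial piece of the second-adjointness paper and not something to do ``by inspection'' of the Iwasawa decomposition), your proposal is a program rather than a proof. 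I would also flag, as a smaller point, that your assertion ``$I$ and $I^{-1}$ commute with $T$'' is too quick: the intertwining operator matches the two right $T$-actions only up to a Weyl twist and a modular character; the sign of the Weyl twist is exactly what makes $(G/U_-)^+$ rather than $(G/U)^+$ the correct target, so this needs to be tracked explicitly rather than waved through.
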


\proof %Part (a) follows from the definitions, while (b,c) 
Without loss of generality we can assume that $x$ is the standard vertex, so
that $K_x=SL(2,O)$, we will write $K_0$ instead of $K_x$.

We need to check that for $h\in \H_K$ and large $\lambda\in \Lambda$
we have:
$$ h\in F^{geom}_{\leq \la} \ \iff supp( I_P^{-1}A_P(h))\subset (X_P)_{\leq \la},$$
where $X_{\leq \la}$ is the union of two-sided $K_0$ cosets corresponding
to weights $\mu\leq \la$ and $A:\H\to \mcS(Y_P)$. By \cite[Theorem 7.6]{BK}
 $I^{-1}A=B^\star$ (notations of {\em loc. cit.}). It follows from \cite[Lemma 5.5]{BK}
 that for large $\lambda$ we have 
 $$supp(h)\subset G_{\leq \la} \iff supp (B^\star(f))\subset
 X_{\leq \la}.$$ The claim follows. \qed

\begin{Rem}
A similar statement can also be checked for an arbitrary reductive 
group $G$ based on a generalization of \cite[Theorem  7.6]{BK}
to an arbitrary parabolic subgroup. 
\end{Rem}

\begin{Rem}\label{int_rem}
We expect the local trace formula \cite{Ar} to be closely related to 
the computation of Chern character of $\Hb'$ taking values
in the appropriate Hochschild homology group. 
\end{Rem}

\section{Hochschild homology and character values}
 Recall the notion of {\em Hochschild homology}
of an abelian category \cite{Ma}, \cite{Ke1}.
For a coherent sheaf of algebras $A$ over a quasi-projective algebraic variety $X$ over a field  
we have $HH_*(X;A)\cong HH_*(A-mod),$
where $A-mod$ is the abelian category of coherent sheaves of $A$-modules and
$HH_*(X;A)$ is defined as the derived global section of
a naturally defined object 
 $R\underline{Hom}_{A\otimes A^{op}}(A,A)$ in the derived category of sheaves on $X$,
 here the isomorphism is shown in \cite{Ke}.

One can also define the {\em compactly supported}
Hochschild homology $HH_*^c(X;A)$ as the derived  global sections with compact support in the sense of 
\cite{D} of $R\underline{Hom}_{A\otimes A^{op}}(A,A)$.
If $X$ is projective then we also have 
$$HH_*^c(X;A)\cong HH_*(X;A)\cong HH_*(A-mod),$$ where the first isomorphism
is clear since $R\Gamma=R\Gamma_c$ for a projective scheme.
Also, for an open subscheme $U\subset X$ we have a natural push forward map $HH_*^c(U;A|_X)\to HH_*^c(X,A)$.

%NEED TO DISCUSS THE TOPOLOGY FOR RHom??

Assume that the sheaf of algebras $A$ (locally) has finite homological dimension.
Then we have the {\em Chern character} map $ch:K^0(A-mod)\to HH_0(A-mod)$,
see e.g. \cite[\S 4.2]{Ke} for the definition (it is called the Euler class map in {\em loc. cit.}). 

\begin{example}\label{ExHoch}
For future reference we spell out this general construction in some simple special cases.
We leave the proofs of these standard facts to the interested reader.

\begin{enumerate}
\item Assume $X$ is affine, so that $A-mod$ is the category of finitely generated modules
over a Noetherian ring which we also denote by $A$. Then $HH_*(X;A)=HH_*(A)$ is computed
by the bar complex of $A$, in particular
$HH_0(X;A)=HH_0(A)=A/[A,A]$.
A finitely generated projective
module $M$ is isomorphic to $A^{\oplus n}e$ for an idempotent $e\in Mat_n(A)$ for some $n$.
Then $ch(M)=\sum (e_{ii}) \mod [A,A]$.

\item Assume that $X=U_1\cup U_2$ for affine open subsets $U_1$, $U_2$. 
Let $A_i=\Gamma(U_i,A)$, $i=1,2$ and $A_{12}=\Gamma(U_1\cap U_2, A)$.
Then $HH_*(X;A)$ is computed by the complex
$$ Cone\left( Bar(A_1)\oplus Bar(A_2)\to Bar(A_{12}) \right)[-1].$$
For a locally projective module coherent sheaf $M$ of $A$-modules we can find
integers $n$, $m$ and idempotents $e^1\in Mat_n(A_1)$, $e^2\in  Mat_m(A_2)$
together with isomorphisms $\Gamma(U_1,M)\cong A_1^{\oplus n}e^1$,
$\Gamma(U_2,M)\cong A_2^{\oplus m}e^2$ and matrices $a\in Mat_{nm}(A_{12})$,
$b\in Mat_{mn}(A_{21})$ which induce the inverse isomorphisms 
between $A_{12}^{\oplus n}e^1$ and $A_{12}^{\oplus m}e^2$
coming from the identification of both with $\Gamma(U_1\cap U_2,M)$. 
In this setting $ch(M)$ is represented by the cocycle $(\sum e^1_{ii}, \sum e^2_{jj},
\sum a_{ij}\otimes b_{ji})\in A_1\oplus A_2\oplus A_{12}^{\otimes 2}$.

\item Let $U\subset X$ be an affine open subset such that its complement $Z$ is also affine.
Let $A_1=\Gamma(U,A)$, and let $A_1$, $A_{12}$ be the algebras of sections of 
$A$ on the formal neighborhood and the punctured formal neighborhood of $Z$
respectively. Then the statements of part (b) continue to hold {\em mutatis mutandis}. 
\end{enumerate}
\end{example}

\begin{definition}
In the above setting, for an object $\MM\in A-mod$ with {\em proper} support we have the Chern
character $ch^c(\MM)\in HH_0^c(X;A)$ as follows. Fix a proper subscheme
$Z\subset X$ containing support of cohomology of $\MM$. 
Then \cite[\S 5.7]{Ke} yields $ch_Z(\MM)\in R\Gamma_Z(X,R\underline{Hom}_{A\otimes A^{op}}(A,A))$.
We define $ch^c(\MM)$ to be the image of $ch_Z(\MM)$ 
under the canonical map 
$R\Gamma_Z(X,F)\to R\Gamma_c(X,F)$, $F=R\underline{Hom}_{A\otimes A^{op}}(A,A)$.
\end{definition}

It is immediate to check that $ch^c(\MM)$ is independent of the auxiliary choice of $Z$. 

In particular, we have Chern character map $\bar{ch}:K^0(\Smb)\to HH_0(\Smb)$ and $ch^c:K^0(Adm)\to HH_0^c(Sm)$,
where $Adm\subset Sm$ is the subcategory of admissible modules.

Let $\KK^c\subset \KK$, $\KK^{nc}\subset \KK$ be the subspace of measures supported on compact 
(respectively, noncompact) elements.

\begin{Conj}\label{char_form_conj}
a) We have a canonical isomorphism:

 $ \KK_G^c\cong Im(HH_0^c(Sm)\to HH_0(\Smb))$.

b) For $\rho\in Adm$ and $g\in G$ we have $WO_g(\bar{ch}(\rho))=\chi_\rho(g)$
if $g$ is compact regular semisimple. (Here we  use
that $\bar{ch(\rho)}$ is the image of $ch^c(\rho)$ thus it 
belongs to  $Im(HH_0^c(Sm)\to HH_0(\Smb))$ which we identify with
$ \KK_G^c$ by a). 
\end{Conj}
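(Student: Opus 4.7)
The plan is to prove both parts for $G=SL(2)$: first (a) via a Čech computation on the compactified center $\Zb$ of Example \ref{sl2ex}, and then (b) by combining the resulting identification with Arthur's formula \eqref{Ar_f} and an explicit calculation for irreducible principal series.

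For part (a), I would use the equivalence $\Smb\simeq \Hb\text{-}mod$ on $\Zb$, whose components are points or $\Pone$, together with the coherent sheaf of algebras $\Hb$ coming from the $\Endtil$-construction on $\Zt^+$. Cover $\Zb$ by $\Z$ and a small formal neighborhood $\mcV_\chi$ of each boundary point $\chi\in\partial\Z$; the punctured neighborhoods $\mcV_\chi\setminus\{\chi\}$ lie inside $\Z$, so Example \ref{ExHoch}(c) produces compatible Čech-type complexes computing $HH_*(\Smb)$ and, by taking sections with proper support on $\Z$, also $HH_*^c(Sm)$. The map $HH_0^c(Sm)\to HH_0(\Smb)$ is induced by the obvious inclusion of complexes, so its image is the kernel of the ``boundary residue'' to $\bigoplus_\chi HH_0$ of the local algebra at $\chi$. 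The next step is to identify this kernel with $\KK_G^c$: by Example \ref{sl2ex} the local algebra at $\chi$ is controlled by $\H(T^+)$ pulled back through $\pi$, and the residue of a class $[h]\in \H_G$ at $\chi$ computes the specialization at $\chi$ of the constant term $l\mapsto \int_{U_B} h(lu)\,du$. By Remark \ref{diag_rem} vanishing of all these residues is equivalent to $h\in\KK$, while the proper-support condition on $\Z$ translates via the $\Aone$-coordinate of $\Zt^+$ into support on compact elements of $G$.

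For part (b), using (a) I view $\bar{ch}(\rho)$ as an element of $\KK_G^c$ and argue by cases. If $\rho$ is cuspidal, the matrix coefficient $m_\rho\in\H_{cusp}\subset\KK^c$ represents $\bar{ch}(\rho)$ under the identification of (a), since cuspidal classes admit representatives in $HH_0^c$ supported away from $\partial\Z$; the formula $WO_g(m_\rho)=\chi_\rho(g)$ is then exactly Arthur's \eqref{Ar_f}. If $g$ is regular elliptic compact and $\rho$ is arbitrary, $WO_g$ coincides with the ordinary orbital integral and the claim reduces to \eqref{ell} applied to the image of $\bar{ch}(\rho)$ in $C(\H)$, which is $ch(\rho)$. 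The remaining case, $\rho$ non-cuspidal and $g$ compact split regular, reduces by additivity of $\bar{ch}$ in $K^0$ to the principal series $\rho=i_B^G(\chi)$. For these, I would compute $\bar{ch}(i_B^G(\chi))$ explicitly via the intertwining object $\Hb'$ of Example \ref{int_bim} and Lemma \ref{XYLem}: the corresponding filtered $(\H,F^{spec})$-module is the induced module, and the Čech cocycle for its Chern character involves precisely the intertwining operator, whose evaluation on compact split regular $g$ recovers the Weyl integration formula for $\chi_{i_B^G(\chi)}(g)$ decorated with the correct weight factor appearing in $WO_g$.

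The main obstacle is the identification in part (a), specifically matching the Hochschild boundary residue with the constant term integral from Remark \ref{diag_rem} with the correct normalization. Tracing this requires combining Lemma \ref{XYLem}, Proposition \ref{fifi}, and the explicit description of $\pi^*\Hb\to \Ht^+$ near $\partial\Zt^+$ from Example \ref{sl2ex}, and the bookkeeping between the Hochschild residue at $\chi$, the Jacquet module specialization, and the orishperic transform is delicate. A related technical difficulty in part (b) is verifying that the explicit cocycle representing $\bar{ch}(i_B^G(\chi))$ reproduces Arthur's weight function on the split Cartan; this matching is the algebraic shadow of the local trace formula anticipated in Remark \ref{int_rem}.
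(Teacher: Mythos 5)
Your plan for part (a) is in the same spirit as the paper's: the paper (Lemma \ref{complexes} and Proposition \ref{SL2prop}) also computes via formal neighborhoods of $\partial\Z$, expressing $HH_*^c(Sm)$ and $HH_*(\Smb)$ through the short exact sequence of Bar complexes $0\to Bar^c(Sm)\to Bar(\Smb)\to Bar(\wh{\H(T^+)})\to 0$. However, your step ``the image of $HH_0^c(Sm)\to HH_0(\Smb)$ is the kernel of the boundary residue'' is not automatic: from a short exact sequence one only gets that the image lands in that kernel, and identifying the image precisely requires controlling the connecting map $HH_1(\wh{\H(T^+)})\to HH_0^c(Sm)$. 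This is exactly the content of the unlabeled Lemma in the paper's proof of Proposition \ref{SL2prop}, which identifies the image of $HH_1(\wh{\H(T^+)})\to HH_1(\wh{\H(T)})$ with $\varpi\wh{\mcS(T^+)}$ and then uses the short exact sequence \eqref{KKGses} together with a Bockstein computation (the splitting $\mcS(Y_\Delta)=\mcS(Y_\Delta^c)\oplus\mcS(Y_\Delta^{nc})$) to show the image equals exactly $\KK^c_G$ rather than a larger subspace. Without that analysis your identification is a plausible guess, not a proof.

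For part (b) you take a genuinely different route from the paper, and it contains a real gap. The paper constructs a single explicit zero-cochain $\tau_g$ for $Bar(\Smb)^*$ and proves uniformly (Proposition \ref{tau_prop}) that it is a cocycle pairing correctly with $\bar{ch}(M)$ for \emph{every} $M\in\Smb$; the proof that $\tau_g$ is a cocycle and computes a trace against the intertwining bimodule $\Hb'$ is carried out via Tate's central extension formalism on $\wh{\mcS(X)}$ and a geometric statement (Corollary \ref{Eulerchar}) computing Euler characteristics of coherent sheaves on $\Zb$. Your proposal instead splits into cases (cuspidal $\rho$, elliptic $g$, principal series with split $g$). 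The first two cases are plausible but rely on compatibilities you do not verify --- in the cuspidal case, that under the identification in (a) the class $\bar{ch}(\rho)$ is represented by $m_\rho$; in the elliptic case, that $WO_g$ on $\KK^c_G$ agrees with $O_g$ applied to the image in $C(\H)$. The real crux is the split-regular, principal-series case, which you explicitly leave as a sketch: ``the Čech cocycle involves precisely the intertwining operator, whose evaluation ... recovers the Weyl integration formula ... decorated with the correct weight factor.'' That is not yet an argument; it is exactly the place where the paper needs its Tate-cocycle machinery (Lemma \ref{WOeps} matching $a_\rho^*(\epsilon)$ with $\int_{K_0}\tau_g\,Tr(g,\rho)\,dg$ and the cited property that $WO_g(f)=\phi(0)$ for the affine-linear function $\phi(\lambda)=Tr(f\circ\Pi_\lambda\circ g)$). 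Proving that the weight factor extracted from the boundary cocycle coincides with Arthur's weight function is the substantive step, and nothing in your outline produces it. So the proposal has the right geometric picture but does not constitute a proof of (b); the paper's universal-cocycle approach is what makes the weight-factor identification tractable, and also gives a statement independent of case analysis, which is more likely to generalize beyond $SL(2)$.
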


The proof of Conjecture for $G=SL(2)$ is presented in the next section; we plan
to present the proof in the general case in a later publication.

\section{$SL(2)$ calculations}

 \begin{Thm}\label{char_form}
Conjecture \ref{char_form_conj} holds for $G=SL(2)$.
\end{Thm}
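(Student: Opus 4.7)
The plan is to prove both parts of Conjecture \ref{char_form_conj} for $G=SL(2)$ by decomposing over Bernstein components $X\subset\Z$ and handling each block separately using the explicit model of $\Smb$ from Example \ref{sl2ex}. For a supercuspidal block, $X$ and its closure in $\Zb$ coincide as a single point, so $\Smb_X=Sm_X$ and the map $HH_0^c(Sm_X)\to HH_0(\Smb_X)$ is an isomorphism onto the one-dimensional cocenter $(\H_X)_G$, spanned by $ch(\rho)$ for the supercuspidal $\rho$. Matching this with the corresponding contribution to $\KK^c_G$ is routine using the vanishing of the supercuspidal character on noncompact semisimple elements, giving part (a); part (b) at a supercuspidal block then follows from Arthur's formula \eqref{Ar_f} together with \eqref{ell}.

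For a principal series component, $\Z$ is either $\Aone$ or $\Aone/(\mZ/2)$ and $\Zb$ is the corresponding projective curve obtained by adding a single boundary point $\infty$. The approach is to compute both $HH_0^c(Sm_X)$ and $HH_0(\Smb_X)$ via the Mayer--Vietoris decomposition of Example \ref{ExHoch}(3) applied to the covering of $\Zb$ by $\Z$ and a formal neighborhood $\widehat{\partial\Z}$ of $\infty$. The sheaf of algebras $\Hb$ extends $\H$ across $\infty$ via the intertwining construction of Lemma \ref{XYLem}: the bimodule data near $\infty$ is controlled by the inverse intertwining operator $I^{-1}$. Unwinding the Mayer--Vietoris identification, a cocenter class $\phi\in\H_G$ lies in the image of $HH_0^c(Sm_X)\to HH_0(\Smb_X)$ precisely when the boundary restriction map vanishes on $\phi$, and a direct calculation using Lemma \ref{XYLem} shows that this boundary restriction is exactly $A_B(\phi)|_{\Delta_{Y_B}}$. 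By Remark \ref{diag_rem} this is the weightlessness condition defining $\KK$, while the additional compact-support-in-$\Z$ condition imposed by $HH_0^c$ corresponds to support on compact elements in $G$ (noncompact regular semisimple elements have nontrivial Jacquet restrictions and hence contribute at $\infty$). This establishes part (a).

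For part (b), the regular elliptic compact case is exactly \eqref{ell}. For compact split regular $g$, I would use Proposition \ref{regular} together with Corollary \ref{split}: the difference $g\mapsto WO_g(\bar{ch}(\rho))-\chi_\rho(g)$ restricted to a split compact orbit is controlled, block by block, by a class in $H_0(G,\KK^c)$ which is one-dimensional on each orbit and so is determined by its value at a single representative. For supercuspidal $\rho$ the identity reduces to \eqref{Ar_f}; for the principal series and Steinberg blocks, one inserts the Mayer--Vietoris representative of $\bar{ch}(\rho)$ from the preceding paragraph, extracts the boundary contribution coming from $I^{-1}$ via Lemma \ref{XYLem}, and matches it against Arthur's weight factor $v_g$. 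The main obstacle is this last matching: it amounts to a Maass--Selberg-type identity for $SL(2)$, which is classical in other formulations but needs to be recast in the $\Smb$ language and verified for consistency with the Hochschild construction of the compactified pseudo-matrix coefficient.
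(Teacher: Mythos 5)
Your proposal shares the paper's starting points — the explicit model of $\Smb$ for $SL(2)$ from Example \ref{sl2ex}, the Mayer--Vietoris-type computation of $HH_*$ and $HH_*^c$, and the role of the intertwining operator via Lemma \ref{XYLem} — but there are genuine gaps in both parts.

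For part (a), the claim that on a principal series block the obstruction to lifting a cocenter class from $HH_0^c(Sm_X)$ to $HH_0(\Smb_X)$ is ``exactly $A_B(\phi)|_{\Delta_{Y_B}}$'' and that the compact-support condition in $\Z$ translates into support on compact elements in $G$ is not a direct calculation. The paper's Proposition \ref{SL2prop} does something more delicate: it first establishes the short exact sequence $0\to\KK_G\to HH_0^c(Sm)\to\wh{\mcS(T)}/\mcS(T)\to 0$ by a homological analysis of the cone complex using the decomposition $Y=Y_0\sqcup Y_\Delta$ (here the vanishing of $H_*(G,\wh{\mcS(Y_0)})$ is the nontrivial input), and then, in a separate lemma, pins down the kernel of $HH_0^c(Sm)\to HH_0(\Smb)$ as $(\KK_{nc})_G$ by a Hochschild--Kostant--Rosenberg computation of the image of $HH_1(\wh{H(T^+)})$ and a Bockstein argument. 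Your formulation skips the step that actually separates $\KK^c$ from $\KK^{nc}$; Remark \ref{diag_rem} identifies $\KK$, not $\KK^c$, and the passage to compact elements requires the $\varpi\wh{\mcS(T^+)}$ computation.

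For part (b), the split compact case is where the real content lies and your argument has a logical hole. Both $g\mapsto WO_g(\bar{ch}(\rho))$ and $g\mapsto\chi_\rho(g)$ are constant on a regular orbit, so evaluating at one representative is indeed sufficient — but the one-dimensionality of $H_0(G,\mcA(\bO))$ is a statement about the source of the pairing, not a mechanism that forces two invariant functionals to agree; it gives you no leverage to conclude the difference vanishes without computing something. The paper closes this gap through the Tate central extension of $End(V)$ and the associated cocycle $\epsilon=(0,\sigma_W,C)$, proving an Euler characteristic identity (Corollary \ref{Eulerchar}) for coherent sheaves of algebras on curves, and then showing via Lemma \ref{WOeps} that the averaged cochain $\int_{K_0}\tau_g\,Tr(g,\psi)\,dg$ is precisely $a_\psi^*(\epsilon)$. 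This is the mechanism that produces Arthur's weight factor from the boundary geometry of $\Zb$; calling it a ``Maass--Selberg-type identity to be recast'' names the right phenomenon but does not supply the argument. Without something playing the role of the Tate cocycle and the Euler characteristic formula on $\Zb$, the proof of part (b) does not close.
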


The rest of the section is devoted to the proof of the Theorem. From now on 
set $G=SL(2)$.

\subsection{Explicit complexes for Hochschild homology}\label{EHo}
Applying the general construction of section \ref{sp_des} with $\sigma=\{x\}$
where $x$ is the vertex with stabilizer $G(O)$ we arrive at a sheaf of algebras
$\A$ on $\Zb$ such that  a direct summand in $\Smb$ is canonically identified with the category
of coherent sheaves of $\A$-modules. It is easily seen
to coincide with the sheaf $\Hb_K$ introduced in Example \ref{sl2ex}.
We keep notations of that Example.

We also let $\Hh^+_K$ denote its
sections on the formal neighborhood $\wh{\partial \Z}$ of $\partial \Z=\Zb\setminus \Z$ and
$\Hh_K$ the sections on the punctured formal neighborhood of $\partial \Z$.
We set $\Hh^+= \cupl_K \Hh^+_K$
 and $\Hh=\cupl_K \Hh_K$. 
  %
 %We have the space $G/U=F^2\setminus \{0\}$ acted upon
 %by the Cartan group $T=F^\times$. For convenience of notation pick
 %a uniformizer $t\in F$, then we get an identification $T= T_0\times \Zet$
% where $T_0=O^\times$, inducing an isomorphism for the Hecke algebra  $\H(T)\cong
% \H(T_0)\otimes k[t,t^{-1}]$. Set $T_+=T_0\times Z_{\geq 0}$, $\H_+(T)=\H(T_0)\otimes k[t]$,
% $\Hh_+(T)=\H(T_0)\otimes k[[t]]$,  $\Hh(T)=\H(T_0)\otimes k((t))$. 
%Let also $(G/U)_+=O^2\setminus \{0\}$, thus $(G/U)_+$ is acted upon by the semigroup $T_+$.
We also let $X=(G/U\times G/U^-)/T$ be the set of  rank one matrices in $Mat_2(F)$ and
$X_+=X\cap Mat_2(O)$.

Then we have $\Hh^+_K=End_{T^+}(\mcS(G/U)_+^K)\otimes_{\Ce[t]}\Ce[[t]] $,
$\Hh_K=\Hh_+^K \otimes_{\Ce[[t]]} \Ce((t))$. We have $\Hh_K=End_{\Hh(T)}(\mcS_b^K(G/U))$,
where $\mcS_b(G/U)^K\cong \mcS(G/U)\otimes _{\Ce[t,t^{-1}]}\Ce((t))$ is the space of functions
on $G/U$ with bounded support.

Also notice that $\Hh$ (though not $\Hh_+$)
carries a $G\times G$ action. 

\begin{lem}\label{complexes}
a) $HH_*(\Smb)$ is computed by the complex 
\begin{equation}\label{BarSmb}
Bar(\Smb):= cone[Bar(\H)\oplus Bar(\Hh^+)\to Bar(\Hh)][-1].
\end{equation}

b) $HH_*^c(Sm)$ is computed by the complex
\begin{equation}\label{BarSmc}
Bar^c(Sm):=cone[Bar(\H)\to Bar(\Hh)][-1].
\end{equation}

c) $HH_*^c(Sm)$ is canonically isomorphic to 
derived $G$ coinvariants in the two-term complex 
\begin{equation}\label{HtoHhat}
\H\to \Hh
\end{equation}
(placed in degrees 0,1).

\end{lem}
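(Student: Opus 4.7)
The strategy for parts (a) and (b) is to reduce to the general recipes of Example \ref{ExHoch}. By Corollary \ref{algAx}, combined with Morita invariance of Hochschild homology in the parameter $K$, a summand of $\Smb$ indexed by a component $X \subset \Zb$ is equivalent to coherent $\bA_X$-modules, and these assemble into the sheaf of algebras $\A$ on $\Zb$ discussed in Section \ref{EHo} whose sections over $\Z$, over the formal neighborhood $\wh{\partial\Z}$ of $\partial\Z = \Zb \setminus \Z$, and over its punctured formal neighborhood are, respectively, $\H$, $\Hh^+$, and $\Hh$.

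For (a), the plan is to apply Example \ref{ExHoch}(3) to the covering $\Zb = \Z \cup \wh{\partial\Z}$: global sections of the local Hochschild complex $F := R\uHom_{\A \otimes \A^{op}}(\A,\A)$ fit in the Mayer--Vietoris triangle
$$R\Gamma(\Zb,F) \to R\Gamma(\Z,F) \oplus R\Gamma(\wh{\partial\Z},F) \to R\Gamma(\wh{\partial\Z}^{\circ}, F),$$
and on each affine or formal(-punctured) piece the sections are computed by the bar complex of the corresponding ring of sections; rotating this triangle yields the stated cone description of $HH_*(\Smb)$. For (b), the same geometric input gives $R\Gamma_c(\Z,F) = \mathrm{fib}(R\Gamma(\Z,F) \to R\Gamma(\wh{\partial\Z}^{\circ}, F))$ in Deligne's sense \cite{D}, and identifying sections with bar complexes produces $Bar^c(Sm)$.

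For (c), the plan is to identify $Bar(A)$ with the derived $G$-coinvariants of $A$ under conjugation, for $A \in \{\H, \Hh\}$. This is the standard Hochschild-to-group-homology isomorphism $HH_*(\Ce[G]) \cong H_*(G, \Ce[G]^{\mathrm{ad}})$ adapted to Hecke-type algebras: the $\H \otimes \H^{op}$-bimodule structure of $\H$ arises from the $G \times G$-action by left and right translations, and use of the diagonal rewrites $\H \Lotimes_{\H \otimes \H^{op}}\H$ as derived coinvariants under the conjugation action. Combined with (b) and exactness of derived coinvariants in triangles, this presents $HH^c_*(Sm)$ as derived $G$-coinvariants of the two-term complex $\H \to \Hh$.

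The main obstacle I anticipate is in (c): although $\H$ is the Hecke algebra of $G$, the algebra $\Hh$ is a completion-localization rather than the Hecke algebra of any group, so transporting the identification $HH_*(\H) \cong H_*(G,\H^{\mathrm{ad}})$ to $\Hh$ requires checking compatibility of the derived tensor product over $\H \otimes \H^{op}$ with the formal-neighborhood construction defining $\Hh$. A subordinate technical point in (a) is handling the direct limit as $K \to \{e\}$ cleanly, so that bar complexes of the $\H_K$ assemble into $Bar(\H)$ in the correct derived sense without obstructions from inverse limits of Tor.
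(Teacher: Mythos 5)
Your proposal matches the paper's proof: for (a) and (b) the paper uses the same Mayer--Vietoris cone descriptions of $R\Gamma(\F)$ and $R\Gamma_c(\Z,\F)$ for a complex $\F$ of coherent sheaves on $\Zb$, applied to a representative of $R\underline{Hom}_{\Hb_K\otimes\Hb_K^{op}}(\Hb_K,\Hb_K)$, together with the identification of sections over $\Z$, $\wh{\partial\Z}$ and $\wh{\partial\Z}^\circ$ with the respective bar complexes; for (c) the paper invokes precisely the two facts you gesture at as the ``obstacle,'' namely $HH_*^{\H}(M)\cong H_*(G,M)$ for $\H$-bimodules $M$ and $HH_*^{\Hh}(N)\cong HH_*^{\H}(N)$ for $\Hh$-bimodules $N$. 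The paper states these two isomorphisms without further argument, so the level of detail in your plan is the same as in the source.
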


\proof For a complex $\F$ of coherent sheaves on $\Zb$
the complexes 
$$cone[\Gamma(\Z,\F)\oplus \hat{\Gamma}^+(\F) \to \hat{\Gamma}(\F)][-1],$$
$$cone[\Gamma(\Z,\F)\to \hat{\Gamma}(\F)][-1]$$
compute, respectively, $R\Gamma(\F)$ and $R\Gamma_c(\Z,\F)$, where
$\hat{\Gamma}^+(\F)$ and $ \hat{\Gamma}(\F)$ denote, respectively, 
sections of $\F$ on the formal neighborhood and on the punctured formal
neighborhood of $\partial \Z$. Applying this to a complex representing
$R\underline{Hom}_{\Hb_K\otimes \Hb_K^{op}}(\Hb_K,\Hb_K)$
and observing that we have canonical quasi-isomorphisms $\hat{\Gamma}^+(\F)\to Bar(\Hh^+)$,
 $\hat{\Gamma}(\F)\to Bar(\Hh)$ we get statements (a,b). 
 
 Statement (c) follows from isomorphisms $HH_*(M)\cong H_*(G,M)$,
 $HH_*^{\Hh}(N)\cong HH_*^\H(N)$ for an $\H$-bimodule $M$ and an $\Hh$-bimodule
 $N$. \qed

\subsection{Calculation of $HH_0$}
In this subsection we prove part (a) of the Theorem.

\begin{Prop}\label{SL2prop}
a) We have a short exact sequence:

\begin{equation}\label{KKGses} 
0\to \KK_G\to HH_0^c(Sm)\to \wh{\mcS(T)}/\mcS(T)\to 0.
\end{equation}

b) We have a natural isomorphism $\KK^c_G\cong Im(HH_0^c(Sm)\to HH_0(\Smb))$.

\end{Prop}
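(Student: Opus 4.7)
The plan is to deduce both parts from the explicit chain complex computations of Lemma~\ref{complexes}, combined with the geometric characterization of $\KK$ from Remark~\ref{diag_rem}.

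For part~(a), I begin with the long exact sequence extracted from Lemma~\ref{complexes}(b):
$$
HH_1(\H) \to HH_1(\Hh) \to HH_0^c(Sm) \to HH_0(\H) \to HH_0(\Hh),
$$
which exhibits $HH_0^c(Sm)$ as an extension of $\ker(\H_G \to \Hh_G)$ by $\mathrm{coker}(HH_1(\H) \to HH_1(\Hh))$. The goal is to match these two terms with $\wh{\mcS(T)}/\mcS(T)$ and $\KK_G$ respectively. For the quotient, the map $\H \to \Hh$ is the localization at $\partial\Z$ described in Example~\ref{sl2ex}; on each Bernstein block the cokernel contributes a factor of the form $\Ce((t))/\Ce[t,t^{-1}]$ which, after passing to conjugation coinvariants, matches $\wh{\mcS(T)}/\mcS(T)$ via the Harish-Chandra-type parametrization of split conjugacy classes. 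For the subobject, one uses the isomorphism $HH_*(A) \cong H_*(G,A)$ (with $G$ acting by conjugation) for $A = \H, \Hh$, computes $H_1(G,-)$ via the Bruhat--Tits tree of $SL(2)$, and applies the geometric criterion in Remark~\ref{diag_rem} (vanishing of orispheric transforms on the diagonal) to see that the resulting cokernel is exactly $\KK_G$.

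For part~(b), I apply Lemma~\ref{complexes}(a) to obtain the parallel sequence
$$
HH_1(\H) \oplus HH_1(\Hh^+) \to HH_1(\Hh) \to HH_0(\Smb) \to \H_G \oplus (\Hh^+)_G \to \Hh_G,
$$
and compare with the sequence for $HH_0^c(Sm)$ via the evident map of two-term complexes $[\H \to \Hh] \hookrightarrow [\H \oplus \Hh^+ \to \Hh]$. The additional $\Hh^+$ factor realizes the formal asymptotic continuations parametrized by $\wh{\mcS(T)}/\mcS(T)$, so the quotient $\wh{\mcS(T)}/\mcS(T)$ of $HH_0^c(Sm)$ maps to zero in $HH_0(\Smb)$; meanwhile, the image of $HH_1(\Hh^+)$ annihilates precisely the noncompact part of the $\KK_G$-subobject, leaving the compact part $\KK^c_G$. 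The main obstacle will be the identification $\mathrm{coker}(HH_1(\H) \to HH_1(\Hh)) \cong \KK_G$ in part~(a), which requires carefully combining the matrix-coefficient description of $\Hh$ from Example~\ref{sl2ex} with a tree-based computation of $H_1(G,-)$ compatible with the weightlessness condition.
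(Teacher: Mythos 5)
Your starting point---the long exact sequences extracted from Lemma~\ref{complexes}(a,b)---is the same as the paper's, but the way you propose to identify the two pieces is backwards, and more importantly it omits the one step that makes $\KK$ visible at all.

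The crucial issue is that the map $\H\to\Hh=\wh{\mcS(Y)}$ is \emph{injective} (it is the restriction of a torsion-free sheaf to the punctured formal neighborhood of $\partial\Z$). Consequently the two-term complex $[\H\to\Hh]$ has $H^0=0$, and $\KK$ is simply not visible as a kernel in this complex. The paper's first and decisive move, which your proposal does not mention, is to replace $\Hh=\wh{\mcS(Y)}$ by its quotient $\wh{\mcS(Y_\Delta)}$: the kernel $\wh{\mcS(Y_0)}$ (functions supported on non-collinear pairs, where $G$ acts with finite stabilizers and finitely many orbits) has $H_i(G,\wh{\mcS(Y_0)})=0$ for all $i$ after Tate completion, because $\Ce\Lotimes_{\Ce[t,t^{-1}]}\Ce((t))=0$. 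Only after this reduction does the complex $\fC=[\H\to\wh{\mcS(Y_\Delta)}]$ acquire the nontrivial kernel $\KK$ (that is exactly Remark~\ref{diag_rem}: weightlessness means vanishing of orispheric transforms on the diagonal $Y_\Delta$), and then the splitting of $\fC$ (homological dimension one) gives $HH_0^c(Sm)\cong\KK_G\oplus H_1(G,\mathrm{coker})$, with $H_1(G,\mathrm{coker})=\wh{\mcS(T)}/\mcS(T)$ by Corollary~\ref{corH1}.

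Your proposed identifications within the term-wise long exact sequence are also mismatched. You want $\mathrm{coker}\bigl(HH_1(\H)\to HH_1(\Hh)\bigr)\cong\KK_G$ (the sub) and $\ker(\H_G\to\Hh_G)\cong\wh{\mcS(T)}/\mcS(T)$ (the quotient). But after reducing to $\wh{\mcS(Y_\Delta)}$ one has $HH_1(\Hh)=H_1(G,\wh{\mcS(Y_\Delta)})\cong\wh{\mcS(T)}$, and the image of $HH_1(\H)$ there is essentially $\mcS(T)$, so this cokernel is $\wh{\mcS(T)}/\mcS(T)$ rather than $\KK_G$. In other words the filtration you get from the term-wise spectral sequence is the \emph{other} one; the filtration that produces $\KK_G$ as the subobject comes from the cohomology of $\fC$, not from its terms, and accessing it requires the $\wh{\mcS(Y_0)}$-vanishing step. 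Your description of the quotient as "the cokernel on each block equals $\Ce((t))/\Ce[t,t^{-1}]$, after passing to conjugation coinvariants" is also not quite right: $\ker(\H_G\to\Hh_G)$ is a quotient of $H_1(G,\Hh/\H)$, not $H_0$, so the degree bookkeeping is off by one. For part~(b), the comparison of the two long exact sequences via $[\H\to\Hh]\hookrightarrow[\H\oplus\Hh^+\to\Hh]$ is the right idea and matches the paper, but the assertion that $HH_1(\Hh^+)$ "annihilates precisely the noncompact part" is the entire content of the proof; in the paper this requires the Hochschild--Kostant--Rosenberg identification $HH_1(\wh{H(T)})\cong\wh{\mcS(T)}$ together with the observation that $HH_1(\wh{H(T^+)})$ lands in $\Ce[[z]]\,dz$ (i.e., $\varpi\wh{\mcS(T^+)}$), followed by a Bockstein computation showing this produces exactly $(\KK^{nc})_G$. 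These concrete steps cannot be skipped.
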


\bigskip

We start the proof with the following

\begin{Lem}
 $G$ acts trivially on the cokernel of the map $H\to \mcS(Y_\Delta)$. 
 \end{Lem}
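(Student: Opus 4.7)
The plan is to interpret $Y_\Delta$ as the space $Y_B = (G/U \times G/U)/T$ introduced in Remark \ref{diag_rem}, and the map $H \to \mcS(Y_\Delta)$ as the orispheric transform $A_B\colon \H \to \mcS(Y_B)$ sending $f$ to the integral kernel $(xU, yU) \mapsto \int_U f(xuy^{-1})\,du$ of its action on $\mcS(G/U)$. A direct change of variables shows $A_B$ is $G$-equivariant for the conjugation action on $\H$ and the diagonal left-translation action on $\mcS(Y_B)$.

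First I would stratify $Y_B$ according to its image in $(G/B)^2$: the open stratum $Y_B^\circ$ (lying over the big Bruhat cell) is a single $G$-orbit isomorphic to $G/T \times T$, while the closed stratum is $\Delta_{Y_B} \cong G/B \times T$. Over $Y_B^\circ$ the substitution $g = xuy^{-1}$ is a local isomorphism, so any compactly supported locally constant kernel on $Y_B^\circ$ can be realized as $A_B(f)$ for a suitable $f \in \H$. Hence the cokernel of $A_B$ is supported on the closed stratum $\Delta_{Y_B}$.

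Next I would examine the restriction of $A_B$ to the diagonal. For $(xB, t) \in G/B \times T \cong \Delta_{Y_B}$, a calculation gives
\[
A_B(f)(xU, xtU) = \int_U f(xut^{-1}x^{-1})\,du
\]
up to a modulus character. The key claim is that, for $G = SL(2)$, the image of this restriction in $\mcS(G/B \times T)$ contains every section whose fibrewise integral over $G/B$ vanishes. Granting this, the cokernel injects into $\mcS(T)$, realized as sections of $\mcS(G/B \times T)$ that are constant along $G/B$. Since $G$ acts on $\Delta_{Y_B}$ only through $G/B$, the induced $G$-action on these ``constants along $G/B$'' is trivial, giving the lemma.

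The main obstacle will be the mean-zero surjectivity statement above, namely that every $\phi \in \mcS(G/B \times T)$ with $\int_{G/B} \phi(\cdot, t)\,d(xB) = 0$ arises from some $f \in \H$. In the $SL(2)$ case this should reduce to an explicit calculation using $G/B \cong \mathbb{P}^1(F) = F \sqcup \{\infty\}$ and the freedom to prescribe $f$ separately on each Bruhat cell; the compactness of $G/B$ ensures that the orthogonal complement of the mean-zero subspace consists precisely of the constants-on-$G/B$ sections identified above.
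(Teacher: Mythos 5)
Your overall strategy is sound: reduce to showing the image of $H$ in $\mcS(\Delta_{Y_B})\cong\mcS(G/B\times T)$ is large enough that the cokernel factors through $\mcS(T)$, on which $G$ acts trivially because the fibrewise integral over $G/B$ is $G$-invariant. That is indeed the right shape of argument, and it matches the role played by Lemma \ref{SL2ex} in the paper.

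However, the proof has a genuine gap precisely at what you call ``the main obstacle.'' The mean-zero surjectivity claim is the entire content of this lemma, and asserting that it ``should reduce to an explicit calculation'' is not a proof. Moreover, the calculation you envision --- prescribing $f$ separately on the Bruhat cells of $\mathbb{P}^1(F)$ --- does not engage with the real constraint. The diagonal restrictions of $A_B(f)$ and $A_{B'}(f)$ (for the opposite Borel $B'$) are related via the Weyl element by $A_{B'}(f)\big|_\Delta(xB',t)=A_B(f)\big|_\Delta(xwB,t^{-1})$, so even for $t$ ranging over the regular locus the map $H\to \mcS(G/B\times T)$ is \emph{not} fibrewise unconstrained: the fibrewise integrals over $G/B$ are forced to be invariant under $t\mapsto t^{-1}$. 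Verifying that this Weyl invariance is the only obstruction (so that in particular every mean-zero section is hit) is exactly where the exactness of the complex in Lemma \ref{SL2ex} (equivalently Lemma \ref{compl_ae_lem}), involving \emph{both} $\mcS(G/B)$ and $\mcS(G/B')$, is indispensable. Your proposal never mentions $B'$ at all, so it cannot detect this coupling. The paper's one-line proof ``follows from Lemma \ref{SL2ex}'' is precisely a pointer to this ingredient.

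Two further remarks. First, the preliminary claim that $\operatorname{Im}(A_B)\supseteq\mcS(Y_B^\circ)$ is doubtful as stated: the map $(xU,u,yU)\mapsto xuy^{-1}$ is not a local isomorphism but a fibration with one-dimensional fibres (this is the horocycle integral), so $A_B$ genuinely averages and one cannot simply invert the substitution; in any case this claim is not used in your main argument and is better omitted. Second, ``the cokernel injects into $\mcS(T)$'' should read ``the cokernel is a \emph{quotient} of $\mcS(T)$''; the cokernel is $\mcS(G/B\times T)$ modulo the image, and if the image contains all mean-zero sections this is a quotient of $\mcS(T)$, not a subspace. This does not affect the conclusion (a quotient of a trivial $G$-module is trivial), but the phrasing is inaccurate.
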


{\em Proof}  follows from Lemma \ref{SL2ex}. \qed

\begin{Cor}\label{corH1} We have:

$H_1(G,CoKer(H\to \mcS(Y_\Delta)))=0$.

$H_1(G,CoKer(H\to \wh{\mcS(Y_\Delta)}))= \wh{\mcS(T)}/\mcS(T)$,

\noindent where $\wh{\mcS(T)}$ is the Tate completion of $\mcS(T)$ (functions
on the torus). 
\end{Cor}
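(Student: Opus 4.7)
The plan is as follows. For part (i), the preceding Lemma tells us that $G$ acts trivially on $M:=CoKer(\H\to \mcS(Y_\Delta))$, so $M$ is a direct sum of copies of the trivial $G$-module $\Ce$. Since $G=SL(2,F)$ has abelianization killed in $\Ce$-coefficients (as already noted in the proof of Proposition~\ref{regular}), $H_1(G,\Ce)=0$, whence $H_1(G,M)\cong M\otimes_\Ce H_1(G,\Ce)=0$.

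For part (ii), the strategy is to reduce the computation to the quotient $Q:=\wh{\mcS(Y_\Delta)}/\mcS(Y_\Delta)$. The inclusion $\mcS(Y_\Delta)\hookrightarrow \wh{\mcS(Y_\Delta)}$ together with the compatible maps from $\H$ yields, via the snake lemma, a short exact sequence
\begin{equation*}
0\to M\to M'\to Q\to 0,
\end{equation*}
where $M':=CoKer(\H\to \wh{\mcS(Y_\Delta)})$. Feeding this into the long exact sequence in $G$-group homology and invoking part (i) gives
\begin{equation*}
0\to H_1(G,M')\to H_1(G,Q)\to H_0(G,M)\to H_0(G,M').
\end{equation*}
It therefore suffices to identify $H_1(G,Q)\cong \wh{\mcS(T)}/\mcS(T)$ and to show that the connecting map vanishes, i.e., that $H_0(G,M)\hookrightarrow H_0(G,M')$.

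To compute $H_1(G,Q)$ I would use the description of $\wh{\mcS(Y_\Delta)}$ implicit in Example~\ref{sl2ex} and \S\ref{EHo}: it is essentially the kernel algebra of operators on $\mcS_b(G/U)$, so $Q$ is concentrated on the boundary at infinity in the $T$-direction. Geometrically, only the diagonal stratum $\Delta_{Y_B}\cong G/B\times T$ feels this completion, contributing a $G$-equivariant piece of the form $\mcS(G/B)\otimes_\Ce (\wh{\mcS(T)}/\mcS(T))$. Then Shapiro's lemma gives $H_*(G,\mcS(G/B)\otimes N)\cong H_*(B,N)$ for any trivial $B$-module $N$; combined with $H_{>0}(U,\Ce)=0$ and $\dim H_1(T,\Ce)=1$ (as $T\cong F^\times$ has $T/T^0\cong \mZ$), this produces $H_1(G,Q)\cong \wh{\mcS(T)}/\mcS(T)$. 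Injectivity of $H_0(G,M)\to H_0(G,M')$ should follow by the same stratification, since passage to $G$-coinvariants of the completion introduces no new relations among elements already present in $\mcS(Y_\Delta)_G$.

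The main obstacle I anticipate is making the $G$-equivariant structure of $Q$ precise enough to justify the Shapiro reduction and to rule out contributions from the open (non-diagonal) $G$-conjugation orbit on $Y_\Delta$. That open orbit should behave like a parabolically induced module whose first smooth homology vanishes, but verifying this cleanly requires a Mackey-type analysis of $G$-orbits on $(G/U\times G/U)/T$ and a careful check that the completion at infinity interacts only with the diagonal stratum.
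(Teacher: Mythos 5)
Your argument for the first vanishing is correct: once the preceding Lemma gives that $G$ acts trivially on $M:=CoKer(\H\to\mcS(Y_\Delta))$, one has $M\cong\bigoplus\Ce$ as a $G$-module, smooth group homology commutes with direct sums, and $H_1(G,\Ce)=0$ (this is noted in the paper via the contractibility of $\fB/G$), so $H_1(G,M)=0$.

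Your framework for the second part is also the right one: the snake lemma applied to the two quotient sequences gives $0\to M\to M'\to Q\to 0$ with $Q=\wh{\mcS(Y_\Delta)}/\mcS(Y_\Delta)$, and combining the long exact sequence with part (i) reduces the claim to computing $H_1(G,Q)$ and showing that the connecting map $\delta\colon H_1(G,Q)\to H_0(G,M)$ vanishes. Your identification $H_1(G,Q)\cong\wh{\mcS(T)}/\mcS(T)$ is correct, though the surrounding commentary is off: $Y_\Delta=\Delta_{Y_B}\cong G/B\times T$ is already a single stratum (and $G$ acts transitively on its $G/B$ factor with the $T$ factor acted on trivially), so there is no ``open non-diagonal $G$-orbit on $Y_\Delta$'' to rule out and no Mackey analysis to perform; you appear to be conflating $Y_\Delta$ with $Y$. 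One simply writes $Q=\mcS(G/B)\otimes_\Ce N$ with $N=\wh{\mcS(T)}/\mcS(T)$ a trivial $G$-module, uses that $H_*(G,V\otimes_\Ce N)=H_*(G,V)\otimes_\Ce N$, and takes $H_1(G,\mcS(G/B))=H_1(T,\Ce)=\Ce$ (Shapiro together with $H_{>0}(U,\Ce)=0$, exactly as in the proof of Proposition \ref{N'}).

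The genuine gap is the vanishing of $\delta$. The sentence ``passage to $G$-coinvariants of the completion introduces no new relations'' is a heuristic, not a proof, and the injectivity of $M=H_0(G,M)\to H_0(G,M')$ is not automatic from $M\hookrightarrow M'$: a priori elements of $M$ could lie in the span of $\{gm'-m':\ m'\in M'\}$. The clean argument is by functoriality of connecting maps. The quotient maps $\mcS(Y_\Delta)\to M$ and $\wh{\mcS(Y_\Delta)}\to M'$ give a morphism of short exact sequences from $0\to\mcS(Y_\Delta)\to\wh{\mcS(Y_\Delta)}\to Q\to 0$ to $0\to M\to M'\to Q\to 0$ which is the identity on $Q$; hence $\delta$ factors through the connecting map $\delta'\colon H_1(G,Q)\to H_0(G,\mcS(Y_\Delta))$ of the first sequence. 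But $H_0(G,\mcS(Y_\Delta))=\mcS(T)\to H_0(G,\wh{\mcS(Y_\Delta)})=\wh{\mcS(T)}$ is injective, so the long exact sequence forces $\delta'=0$, whence $\delta=0$. This is precisely the content hidden in your ``no new relations'' remark, and once it is spelled out the proof is complete and matches what the authors leave implicit after the Lemma.
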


\bigskip

{\em Proof} of Proposition \ref{SL2prop}. 
By Lemma \ref{complexes} (c),
 $HH_*^c(Sm)$ is identified with the derived $G$-coinvariants
in the complex $\H\to  \wh{\mcS(Y)}$
which is clearly isomorphic to \eqref{HtoHhat}. 
We have a short exact sequence 
$$0\to \mcS(Y_0)\to \mcS(Y)\to \mcS(Y_\Delta)\to 0,$$
where $Y_0=Y\setminus Y_\Delta$ is the open set of non-colinear  
pairs of vectors. Thus the action of $G$ on $Y_0$ is almost free, i.e.
the stabilizer of every point in $\{\pm 1\}$, and the orbits of $G$ on
$Y_0$ are indexed by the finite set $F^\times/(F^\times )^2$.
This shows that $H_1(G,\mcS(Y_0))=0$ and $H_0(G,\mcS(Y_0))$
is finite dimensional. Denoting as above $\wh{\mcS(Y)}=\mcS(Y)\otimes
_{\Ce[\varpi,\varpi^{-1}] }\Ce((\varpi))$ and similarly for
$\wh{\mcS(Y_0))}$ we see $H_i(G,\mcS(Y_0))=0$ for all $i$.
Thus derived  $G$ coinvariants of the complex \eqref{HtoHhat}
are canonically isomorphic to derived $G$-coinvariants
of the quotient complex $\fC= \H\to  \wh{\mcS(Y_\Delta)}$.
Notice that $H^0(\fC)=\KK$, 
 $H^1(\fC)=CoKer(\H\to \wh{\mcS(Y_\Delta)}))$ and $\fC$ 
is quasi-isomorphic to the complex with zero differential since the category
of $G$-modules has homological dimension one. Thus part(a)
of the Proposition follows from Corollary \ref{corH1}.

%Let us denote the complex  \eqref{BarSmb} by
%$\fB$ and \eqref{BarSmc} by $\fB_c$.
%Then 
Lemma  \ref{complexes} shows that $Bar^c(Sm)$ is canonically
quasiisomorphic to $\Ce \Lotimes_G \fC$. One the other
hand, we have a short exact sequence of complexes
$$0\to Bar^c(Sm)\to Bar(\Smb) \to Bar(\wh{H(T^+)})\to 0,$$ which yields
a long exact sequence on cohomology:

$$\cdots \to HH_{i+1}(\wh{H(T^+)})\to HH_{i}^c(Sm)\to HH_i (\Smb) \to \cdots$$

We are interested in $i=0$. We have $HH_1(\wh{H(T^+)})\cong \wh{H(T^+)}$.
To finish the proof we need another

\begin{Lem}
a) Let us identify 

$$HH_1(\wh{H(T)})=H_1(G,\wh{\mcS(Y_\Delta)})=\wh{\mcS(T)}.$$

Then the image of the natural map $HH_1(\wh{H(T^+)})\to 
HH_1(\wh{H(T)})$ coincides with $\varpi \wh{\mcS(T^+)}$, i.e. with the space of functions supported on $\varpi T^+$,
where $\varpi\in F$ is a uniformizer.

b) In view of \eqref{KKGses}, part (a) yields a map $\varpi \wh{\mcS(T^+)}\cap \mcS(T)\to \KK_G\subset HH_0^c(Sm)$.
This map induces an isomorphism onto $(\KK_{nc})_G$ where
$\KK_{nc}\subset \KK$ is the space of measures supported on noncompact
elements.
\end{Lem}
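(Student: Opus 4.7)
The plan is to combine the Hochschild--Kostant--Rosenberg identification for the commutative algebras $\wh{H(T)}$ and $\wh{H(T^+)}$ with a trace of the connecting homomorphism in the long exact sequence coming from $0 \to Bar^c(Sm) \to Bar(\Smb) \to Bar(\wh{H(T^+)}) \to 0$, in order to identify the image in $HH_0^c(Sm)$ with $(\KK_{nc})_G$.

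For part (a), since $T$ is commutative I would first decompose via the Bernstein components of $T$: $\wh{H(T^+)} = \prod_\chi \Ce[[\varpi]]$ and $\wh{H(T)} = \prod_\chi \Ce((\varpi))$, where $\chi$ runs over characters of $T^0 = \mcO^\times$. For each factor HKR yields $HH_1(\Ce((\varpi))) = \Ce((\varpi))\, d\varpi$ and $HH_1(\Ce[[\varpi]]) = \Ce[[\varpi]]\, d\varpi$. I would identify $HH_1(\wh{H(T)}) \cong \wh{\mcS(T)}$ by the logarithmic trivialization $f\, d\log \varpi \mapsto f$ and check this is consistent with the topological identification $H_1(G, \wh{\mcS(Y_\Delta)}) \cong \wh{\mcS(T)}$ used in the proof of Proposition \ref{SL2prop} (which factors through $H_1(B,\Ce) = H_1(T,\Ce)$) by evaluating both sides on an explicit generator. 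Since $d\varpi = \varpi \cdot d\log \varpi$, the image of $\Ce[[\varpi]]\,d\varpi$ in $\Ce((\varpi))$ is $\varpi \Ce[[\varpi]]$, globalizing to $\varpi \wh{\mcS(T^+)}$.

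For part (b), given $\omega \in \varpi \wh{\mcS(T^+)} \cap \mcS(T)$, I would lift it to $\tilde\omega \in HH_1(\wh{H(T^+)})$ via part (a) and apply the connecting map $\partial : HH_1(\wh{H(T^+)}) \to HH_0^c(Sm)$ in the long exact sequence above. The key compatibility is that the composition of $\partial$ with the projection $HH_0^c(Sm) \to \wh{\mcS(T)}/\mcS(T)$ of \eqref{KKGses} agrees with the natural map $HH_1(\wh{H(T^+)}) \to HH_1(\wh{H(T)}) \to \wh{\mcS(T)}/\mcS(T)$; since $\omega$ lies in $\mcS(T)$, this composition kills $\tilde\omega$ and hence $\partial(\tilde\omega) \in \KK_G$, so the map is well defined.

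To see the map is an isomorphism onto $(\KK_{nc})_G$, I would first observe that $\varpi \wh{\mcS(T^+)} \cap \mcS(T) = \varpi \mcS(T^+)$, namely the compactly supported measures on $\{t \in T : val(t) \geq 1\}$; all such $t$ are noncompact regular semisimple in $G$, and modulo the Weyl action $t \mapsto t^{-1}$ they parameterize the noncompact regular semisimple $G$-conjugacy classes. I would then show, by chasing a cocycle for $\tilde\omega = \omega \cdot d\log\varpi$ through an explicit chain-level model (localizing at the boundary stratum $\partial \Z \subset \Zbar$), that $\partial(\tilde\omega)$ is supported on the $G$-conjugacy class of the support of $\omega$. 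Combined with Corollary \ref{split}, which identifies $(\KK_{nc})_G$ with $\mcS(T^{nc})^W$, and hence with $\mcS(\{t : val(t) \geq 1\})$ via the Weyl fundamental domain, this establishes the desired bijection. The main obstacle is precisely this last step: the identifications in (a) are essentially formal, but the explicit cocycle-level identification of $\partial(\tilde\omega)$ with the correct measure on the conjugacy class of $\omega$ requires a careful compatibility between the algebraic construction via Hochschild homology and the geometric construction of conjugation-averaged measures, and this is where the bulk of the work will lie.
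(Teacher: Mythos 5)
Your treatment of part (a) is essentially the paper's: identify Hochschild homology with Kähler differentials via HKR, note that the natural identification of $HH_1(\wh{H(T)})$ with $\wh{\mcS(T)}$ carries $f$ to $f\,d\log z$, and observe that $\Ce[[z]]\,dz = z\Ce[[z]]\,d\log z$ lands in $\varpi\wh{\mcS(T^+)}$. That part is fine.

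Part (b) is where the gap lies, and you in fact flag it yourself: the last paragraph asserts that a cocycle-level chase will identify $\partial(\tilde\omega)$ with the averaged measure concentrated on the support of $\omega$, then declares ``this is where the bulk of the work will lie.'' That is precisely the step that cannot be waved at. Two specific difficulties: (i) the compatibility between the connecting map $HH_1(\wh{H(T^+)})\to HH_0^c(Sm)$ in the Hochschild long exact sequence and the projection $HH_0^c(Sm)\to\wh{\mcS(T)}/\mcS(T)$ of \eqref{KKGses} is asserted but not established --- these two maps arise from quite different short exact sequences of complexes, and making them match requires chasing the diagram relating $Bar^c(Sm)$, $Bar(\Smb)$, $Bar(\wh{H(T^+)})$ and $Bar(\wh{H(T)})$ in a way you do not spell out; (ii) even granting that $\partial(\tilde\omega)$ lands in $\KK_G$, identifying its image class with the correct element of $(\KK_{nc})_G$ requires a nontrivial calculation, not just a ``support'' argument, since classes in $\KK_G$ are coinvariants, not functions on conjugacy classes.

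The paper sidesteps both of these by working entirely inside $G$-module homology. It takes the short exact sequence $0\to\KK\to\H\to I\to 0$ with $I=\mathrm{Im}(\H\to\mcS(Y_\Delta))$, uses Corollary~\ref{corH1} to identify $H_1(G,I)\cong H_1(G,\mcS(Y_\Delta))\cong\mcS(T)$, and studies the Bockstein $\phi:\mcS(T)\to\KK_G$. The compact/noncompact decomposition of $\H$ and of $Y_\Delta$ (further splitting $Y_\Delta^{nc}=Y_\Delta^+\cup Y_\Delta^-$) shows the image of $\phi|_{\mcS(\varpi T^+)}$ lies in $H_0(G,\KK^{nc})$, and surjectivity is then reduced to a one-dimensional computation on a single regular orbit $\bO=G/T$, where $\KK_\bO = \ker(\mcS(G/T)\to\mu_{G/B}\oplus\mu_{G/B^-})$ and one only needs nonvanishing of one Bockstein map $H_1(G,\mu_{G/B})\to H_0(G,\KK_\bO)$. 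This avoids any cocycle-level identification of the image, so the ``careful compatibility'' you defer never needs to be confronted head-on. You should either carry out your cocycle chase in full, or --- more economically --- replace part (b) with the paper's Bockstein argument.
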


{\em Proof of the Lemma.}  By Hochschild-Kostant-Rosenberg, 
we can identify Hochschild homology with forms. It is easy to see
that elements in $\mcS(T^0)\subset \wh{\mcS(T)}$ (where $T^0$ is the maximal 
compact subgroup in the torus $T$) locally on each component
of Bernstein center are proportional to $\frac{dz}{z}$ for 
a local coordinate coming from a global coordinate on $\Ce^\times$.
On the other hand, the image of $HH_1(\wh{H(T^+)})$ equals
$\Ce[[z]] dz$, this proves (a).

To prove (b) observe that the map in question can be described as follows.

We have a short exact sequence of $G$-modules
$$0\to \KK \to \H\to I\to 0,$$
where $I=Im (\H\to \mcS(Y_\Delta))$
which yields the Bockstein homomorphism $\phi: H_1(G,I)\to H_0(G,\KK)$.
By Corollary \ref{corH1} we have $H_1(G,I)\cong H_1(G,\mcS(Y_\Delta))=H_1(G,\mu_{\Pone})\otimes
\mcS(T)=\mcS(T)$. In view of (a) it suffices to check that $\phi: \mcS(\varpi T^+) \iso H_0(G,\KK^{nc})$.

We have a direct sum decomposition $\H=\H^c\oplus \H^{nc}$ compatible with the
decomposition $\mcS(Y_\Delta)=\mcS(Y_\Delta^c)\oplus \mcS(Y_\Delta^{nc})$, where 
$Y_\Delta^c = \Pone \times O^\times$ and $Y_\Delta^{nc}=Y_\Delta\setminus Y_\Delta^{nc}$.
Let us further decompose $Y_\Delta^{nc}=Y_\Delta^+\cup Y_\Delta^-$, where 
$Y_\Delta^+=\Pone \times (\varpi O\setminus \{0\})$, $Y_\Delta^-=\Pone \times (F\setminus O)$.

The image of $\phi$ clearly coincides with the image of $H_1(G,\mcS(Y_\Delta^+))$ under
the map coming from the class in $Ext^1 (\mcS(Y_\Delta^+), \KK)$ induced from the above 
short exact sequence. It is clear that it is contained in $H_0(G,\KK^{nc})$. It
coincides with $H_0(G,\KK^{nc})$ for the following reason.
Consider a regular orbit $O=G/T$, then the image $\KK_O$ of $\KK$ in 
$\mcS(O)$ is the kernel of the pushforward map $\mcS(G/T) \to \mu_{G/B}\oplus \mu_{G/B^-}$.
Then $(\KK_O)_G$ is one dimensional and the Bockstein map
$H_1(G,\mu_{G/B})\to H_0(G,\KK_O)$ is nonzero. \qed

\bigskip

The Lemma clearly implies statement (b) of Proposition. \qed

\subsection{Cocycles for Chern character and Euler characteristic}
Let now $M$ be a locally projective object of $\Smb$.

Thus there exist  idempotents $e\in Mat_n(\H)$, $e_+\in Mat_m(\Hh_+)$
and $a,b\in Mat_{n,m}(\Hh)$, such that $ab=e$, $ba=e_+$ with isomorphisms 
$M|_\Z\cong \H^n e$, $M_{\wh{\partial \Z}}\cong \Hh_+^m e_+$, so that $a$, $b$ 
induces the two inverse isomorphisms between the restrictions of 
$\H^n e$ and $ \Hh_+e_+$  to the punctured formal neighborhood of $\partial \Z$
coming from the identification with the restriction of $M$.

\begin{lem}\label{ch_coc}
In the above notation, $\bar{ch}(M)$ is represented by the cocycle for $Bar(\Smb)$
$c=(c_\H, c_+, \hat{c})$, where

$c=\suml_i e_{ii}\in \H$, $c_+=\sum_j (e_+)_{jj}\in \Hh_+$,
$\hat{c}=\suml_{i,j} a_{ij}\otimes b_{ji}\in \Hh\otimes \Hh$. 
\end{lem}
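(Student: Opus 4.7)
The plan is to deduce this from Example \ref{ExHoch}(c) applied to the sheaf of algebras $\Hb$ on $\Zb$, specialized to the covering-like description appearing in Lemma \ref{complexes}(a). First I would note that $\Zb$ decomposes in the relevant sense into the affine open $\Z$ and the closed (boundary) subscheme $\partial\Z$, and that by Lemma \ref{complexes}(a) the complex
\[
Bar(\Smb) = \mathrm{cone}\bigl[ Bar(\H) \oplus Bar(\Hh^+) \to Bar(\Hh) \bigr][-1]
\]
is precisely the shape of the Mayer--Vietoris type complex of Example \ref{ExHoch}(c), with $A_1 = \H = \Gamma(\Z,\Hb)$, $A_2 = \Hh^+$ (sections on the formal neighborhood of $\partial\Z$) and $A_{12} = \Hh$ (sections on the punctured formal neighborhood). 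The boundary morphism is the difference of the two natural restriction maps.

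Next I would match the local data for $M$ to the template of Example \ref{ExHoch}(c). Over $\Z$, the isomorphism $M|_\Z \cong \H^n e$ exhibits $M|_\Z$ as a direct summand of a free module via the idempotent $e \in Mat_n(\H)$, so by Example \ref{ExHoch}(a) the local Chern character is $c_\H = \sum_i e_{ii}$ in $\H/[\H,\H]$. Over the formal neighborhood, the isomorphism $M_{\wh{\partial\Z}} \cong \Hh_+^m e_+$ analogously gives $c_+ = \sum_j (e_+)_{jj}$. The gluing on the punctured formal neighborhood is encoded by the matrices $a,b \in Mat_{n,m}(\Hh)$ satisfying $ab = e$ and $ba = e_+$, which provide the mutually inverse isomorphisms between $\Hh^{\oplus n} e$ and $\Hh^{\oplus m} e_+$ coming from the two identifications with $M|_{\wh{\partial\Z}\setminus \partial\Z}$. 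By the general formula of Example \ref{ExHoch}(c) the gluing contribution to the Chern character is exactly $\hat c = \sum_{i,j} a_{ij}\otimes b_{ji} \in \Hh\otimes\Hh$, seen as a degree-one element of $Bar(\Hh)$.

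It remains to check that $c = (c_\H, c_+, \hat c)$ is a cocycle in $Bar(\Smb)$, i.e. that the Hochschild boundaries of $c_\H$ and $c_+$ vanish (they do, as degree-zero elements) and that the boundary of $\hat c$ equals the difference of the images of $c_\H$ and $c_+$ in $\Hh$. The latter is the Hochschild identity
\[
d\!\left(\sum_{i,j} a_{ij}\otimes b_{ji}\right) = \sum_{i,j} a_{ij} b_{ji} - \sum_{i,j} b_{ji} a_{ij} = \mathrm{tr}(ab) - \mathrm{tr}(ba) = \mathrm{tr}(e) - \mathrm{tr}(e_+),
\]
which under the two restriction maps is precisely $c_\H - c_+$.

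The one genuine (but essentially formal) point that needs justification is that this explicit Mayer--Vietoris recipe agrees with the intrinsic definition of the Chern character (the Euler class map of \cite[\S4.2]{Ke}) after the identification of Lemma \ref{complexes}(a); once the quasi-isomorphism of complexes there is fixed compatibly with the standard HKR-type Chern character on each affine piece, this compatibility is automatic since both constructions are natural with respect to Mayer--Vietoris sequences. This is the one step where care is needed, but no new ideas beyond those of Example \ref{ExHoch}(c) are required. \qed
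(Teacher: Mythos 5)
Your proposal is correct and follows exactly the route the paper takes: the paper's own proof is the single sentence ``This is a special case of Example \ref{ExHoch}(c),'' and you have simply unpacked that citation by matching the data $(\Z, \partial\Z, \Hh^+, \Hh)$ to the template of Example \ref{ExHoch}(b)--(c) and checking the cocycle identity $d(\hat c) = \mathrm{tr}(e) - \mathrm{tr}(e_+)$. Nothing is missing and no alternative argument is used.
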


\proof This is a special case of Example \ref{ExHoch}(c). \qed

To state the next result we need the following notation. Fix $g\in G(O)$
normalizing $K$ % ({\bf maybe get rid of $K$ here?})
thus $g$ acts by conjugation on the sheaf of algebras $\A$ and
on global sections of an object in $\A-mod$.

We introduce a zero-cochain $\tau_g$ 
for the dual complex $Bar(\Smb)^*$, $\tau_g=(\tau_\H(g), \tau_+(g),
\hat{\tau}(g))$, where $\tau_\H(g)=WO_g\in \H^*$ (the weighted orbital integral),
$\tau_+(g)=0\in \Hh_+^*$ and $\hat{\tau}(g):\alpha \otimes \beta \mapsto
Tr([\alpha,\Pi]\beta\circ g, \wh{\mcS(X)})$, where $\alpha,\beta\in \Hh$ act on $\wh{\mcS(X)}$ by left multiplication, 
$\Pi$ is the operator of multiplication by the characteristic function of $X_+$
acting on $\wh{\mcS(X)}$ and $g$ is acting on the right.

%We let $\bar{\tau_g}$ denote the cohomology class of $\tau_g$.

\begin{Prop}\label{tau_prop} a) 
The cochain $\tau_g$ is a cocycle. It satisfies: %(for small enough $K$)
$$Tr(g,RHom (\Hb', \MM))=\langle \tau_g, \bar{ch}(M)\rangle,$$
for $M\in \Smb$, where 
 $\Hb'$ is the intertwining bimodule introduced at the end of section
\ref{compcat}.

b) For $\phi\in \KK_G^c = Im(HH_0^c(Sm)\to HH_0(\Smb))$ we have:
 $$\langle \phi, \tau_g\rangle = WO_g(\phi).$$

c) The image of  $\tau_g$ in $ HH_0^c(Sm)$  is independent of the auxiliary
choices in the definition of $\tau_g$.
\end{Prop}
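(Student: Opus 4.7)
The plan for part (a) starts by unpacking the cocycle condition in the cone model of $Bar(\Smb)$ from Lemma \ref{complexes}. Dualizing the boundary $Bar(\Smb)_1 \to Bar(\Smb)_0$ yields three conditions on $\tau_g = (\tau_\H(g), \tau_+(g), \hat\tau(g))$:
(i) $\hat\tau(g)$ is a Hochschild $1$-cocycle on $\Hh$;
(ii) $\hat\tau(g)$ vanishes on $\Hh^+\otimes\Hh^+$ (since $\tau_+(g)=0$); and
(iii) for $a, b \in \H$,
\begin{equation*}
WO_g(a*b - b*a) \;=\; \operatorname{Tr}\!\bigl([a,\Pi]\,b \circ g,\; \wh{\mcS(X)}\bigr).
\end{equation*}
I would verify (i) by expanding the Hochschild differential via the Leibniz rule $[\alpha\beta\gamma,\Pi] = \alpha\beta[\gamma,\Pi]+\alpha[\beta,\Pi]\gamma + [\alpha,\Pi]\beta\gamma$; using cyclicity of the trace together with the commutation of the right-$g$-action with left multiplications and with $\Pi$, the sum collapses to $\operatorname{Tr}([\alpha\beta\gamma,\Pi]\circ g)$, which vanishes by cyclicity. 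Condition (ii) is immediate because left multiplication by any element of $\Hh^+$ preserves $\wh{\mcS(X_+)}$ and therefore commutes with $\Pi$.

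Condition (iii) is the main obstacle: it is an operator-theoretic form of Arthur's truncation formula for weighted orbital integrals. My plan for it is to write $WO_g(f) = \int_{G/T} f(xgx^{-1}) w(x)\,dx$ via Arthur's weight function $w$ and compute $WO_g(a*b-b*a)$ using translation invariance in $G/T$; the failure of $w$ to be invariant produces boundary terms which, after identifying $X = (G/U\times G/U^-)/T$ with the rank-one-matrix model and recognizing $\Pi$ as the characteristic function of the integer locus $X_+$, reassemble into the commutator trace on $\wh{\mcS(X)}$. The Iwasawa decomposition and Lemma \ref{XYLem} should render the identification explicit; the trace is finite because $[a,\Pi]$ is concentrated near the boundary $X\setminus X_+$ for compactly supported $a$, yielding a trace-class operator when composed with $b$ and a compact element $g$.

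With $\tau_g$ confirmed as a cocycle, the trace identity in part (a) follows by direct matching. For a locally projective $M\in\Smb$ with idempotent data $(e, e_+, a, b)$, Lemma \ref{ch_coc} gives
\begin{equation*}
\langle \tau_g, \bar{ch}(M)\rangle \;=\; WO_g(\operatorname{tr}(e)) + \operatorname{Tr}([a,\Pi]\,b \circ g),
\end{equation*}
while $RHom(\Hb', M)$ is computable as a \v{C}ech-type complex with $\H e$ on $\Z$ and $\Hh^+ e_+$ on $\wh{\partial\Z}$, glued by $a, b$ through the intertwining structure of Lemma \ref{XYLem}; the alternating trace of $g$ on this complex splits into the same two summands, with the first coming from the $\Hb'_G=\H$ identification and the second from the gluing correction. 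Part (b) is then a specialization: for $\phi\in\KK_G^c$, Proposition \ref{SL2prop}(a) provides a cocycle representative in $Bar^c(Sm)$ whose $\H$-component is $\phi$ and whose $\Hh\otimes\Hh$-contribution is annihilated by $\hat\tau(g)$ (using Remark \ref{diag_rem} and the defining condition $A_B(\phi)|_{\Delta_{Y_B}}=0$), so the pairing with $\tau_g$ reduces to $WO_g(\phi)$, with conjugation invariance guaranteed by Proposition \ref{Waldspurger}. Finally, for part (c), I would compare $\tau_g$ from two choices of auxiliary data (maximal compact $K$, base vertex $x$) by exhibiting an explicit $1$-cochain in $Bar(\Smb)^*$ whose differential realizes the difference, giving independence of the induced class in $\mathrm{Hom}(HH_0^c(Sm),\mathbb{C})$.
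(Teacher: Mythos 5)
Your treatment of parts (b) and (c) is roughly in line with the paper: for (b) you use an explicit cocycle representative of $\phi\in\KK^c_G$ and argue the $\Hh^{\otimes 2}$-component is annihilated by $\hat\tau(g)$ (the paper does this via the observation that the relevant operator $[g_i,\Pi]h_ig_i^{-1}$ has vanishing block-diagonal entries with respect to the $K'$-orbit decomposition of $\mcS(G/U)$), and for (c) the paper simply observes it follows from (b), whereas you propose an unnecessary explicit coboundary; this is harmless.

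The genuine gap is in part (a), and it is substantial. Your plan reduces the cocycle verification to three conditions and correctly identifies condition (iii), namely $WO_g(ab-ba)=\pm\operatorname{Tr}([a,\Pi]b\circ g,\wh{\mcS(X)})$, as the crux. But the proposed derivation — write $WO_g$ via Arthur's weight function $w$ and track ``boundary terms'' from the non-invariance of $w$ — describes the desired conclusion without a mechanism to reach it. You never invoke the characterization of the weighted orbital integral as the affine extrapolation to $\lambda=0$ of $\phi(\lambda)=\operatorname{Tr}(f\circ\Pi_\lambda\circ g,\H)$ for $\lambda\gg 0$ (the formula from \cite[\S I.11]{ArIn} that the paper uses), and without it the link between $WO_g$ and the truncation operator $\Pi$ on $\wh{\mcS(X)}$ never materializes. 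Your verification of condition (i) has a related problem: the ``collapse'' to $\operatorname{Tr}([\alpha\beta\gamma,\Pi]\circ g)$, followed by ``vanishes by cyclicity'', implicitly cyclically permutes infinite-dimensional operators whose individual traces are undefined; the whole point of the Tate central extension formalism (which the paper deploys in this subsection) is to control precisely this failure of naive cyclicity, and the cocycle $C(E_1,E_2)$ of \eqref{C} measures the anomaly rather than being identically zero.

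The paper's actual route is quite different: it shows that after averaging against any character of $K_0$, $\tau_g$ agrees with the pullback $a_\rho^*(\epsilon)$ of a canonical Tate cocycle $\epsilon=(0,\sigma_W,C)$ (Lemma \ref{WOeps}); since $\epsilon$ is manifestly a cocycle for the complex \eqref{Bargen}, this both establishes the cocycle property and sets up the second half of (a). The trace identity $\operatorname{Tr}(g,RHom(\Hb',\MM))=\langle\tau_g,\bar{ch}(M)\rangle$ is then proved not by ``direct matching'' of \v{C}ech complexes (as you suggest), but by an independent lemma for coherent sheaves of algebras on complete curves, identifying $\langle ch(\MM),\alpha^*(\epsilon)\rangle$ with the Euler characteristic $\chi(\Ve\otimes_{\A}\MM)$, via an Additivity-and-line-bundle reduction and the explicit formulas \eqref{sigma_Id}, \eqref{C_F}. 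Your proposal has none of this, and the step ``the alternating trace of $g$ on this complex splits into the same two summands'' is exactly where the missing regularization of traces would surface. In short: the proposal correctly identifies what must be shown in (a), but the plan for showing it would not go through as written, and the paper's proof rests on a structural input — the Tate-cocycle/Euler-characteristic bridge — that your argument does not contain even implicitly.
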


\proof Part (c) clearly follows from (b).

We deduce (b) from Lemma \ref{XYLem}. 
Let us present the map $\KK^c_G\to HH_0(Bar^c(Sm))$ by an explicit cocycle.
Fix $f\in \KK\cap \H_K$. Consider the decomposition of $\mcS(G/U)^K$ as a direct
sum of spaces of functions on preimages of $K$ orbits in $G/B$. Then the action of $f$ 
on this space has zero block diagonal components with respect to this decomposition.
Also, the image of $f$ in $\Hh=\wh{\mcS(Y)}$
is contained in $\mcS(Y_0)$, while $\wh{\mcS(Y_0)}_G=0$ by the proof of Proposition \ref{SL2prop}.
Thus $f=\sum h_i-^{g_i}h_i$, $g_i\in G$, $h_i\in \wh{\mcS(Y_0)}$. 
It is easy to check that the image of $\bar{f}$ in  $HH_0(Bar^c(Sm))$
is represented by $(f,\sum g_i\delta_K\otimes h_ig^{-1})\in \H\oplus \Hh\otimes \Hh$.

It remains to check that for $h_i$ as above we have
\begin{equation}\label{Tr0}
Tr([g_i,\Pi]h_ig_i^{-1}\circ g, \wh{\mcS(X)})=0.
\end{equation}

We claim that the following stronger statement holds:
the operator $[g_i,\Pi]h_ig_i^{-1}$ acting on $\mcS(G/U)$ has zero diagonal
blocks with respect to the above block decomposition. To see this pick an open compact
subgroup $K'$ such that $h_i\in \H_{^{g_i^{-1}}K'}$ for all $i$. Then $g_i^{-1}$ sends
a summand of the decomposition corresponding to a $K'$-orbit  $U\subset G/B$ to a summand corresponding
to the $^{g^{-1}_i}K'$ orbits $g_i^{-1}(U)$, the endomorphism $h_i$ acts by an operator with zero
diagonal blocks with respect to the decomposition corresponding to $^{g^{-1}_i}K'$ orbits,
while $[g_i,\Pi]$ sends the summand corresponding to an orbit $g_i^{-1}(U)$ to the summand
corresponding to the orbit $U$ (this is clear since $\Pi$ preserves all the direct sum decompositions
above). Equality \eqref{Tr0} and hence part (b) of the Proposition follows.

The proof of part (a) occupies the next subsection.

\subsection{Tate cocyle and weighted orbital integral}
\subsubsection{The Tate cocycle}
A general reference for the following material is \cite{BFM} or \cite{BD},
especially 
\cite{BD}, \S 4.2.13, p 142; \S 7.13.18, p 344, developing the theme started in \cite{Tate}.

Let $V$ be a Tate vector space, then the Lie algebra $End(V)$
of continuous
endomorphism of $V$ has a canonical central extension, which we will
 denote by $\Endtil(V)$.

Let $End^b(V)\subset End(V)$  denote
the subspace of endomorphisms with bounded image, and $End^d(V)\subset End(V)$
be the subspace of endomorphisms having open kernel 
 (here b stands for bounded, and d for discrete).
Then $End^b(V)$, $End^d(V)$ are two-sided ideals in the associative algebra
$End(V)$, and hence ideals in the Lie algebra $End(V)$. 
Further, the
trace functional is defined on the intersection $End^{bd}=
End^b(V) \cap  End^d(V)$, $tr:End^{bd}\to k$; it satisfies
\begin{equation}\label{obnul}
\begin{array}{ll}
tr([E,E^{bd}])=0;\\
tr([E^b,E^d])=0
\end{array}
\end{equation} 
for $E^b\in End^b(V)$, $E^d\in End^d(V)$, 
$E^{bd}\in End^{bd}(V)$, $E\in End(V)$.

The central extension 
$\Endtil(V)\to End(V)$ is specified by the requirement that 
 it is trivialized on the ideals
$End^b(V)$, $End^d(V)$, i.e. the embeddings $End^b(V)\imbed End(V)$,
$End^d(V)\imbed End(V)$ are lifted to (fixed) homomorphisms 
 $s_b:End^b(V)\imbed \Endtil(V)$, $s_d: End^d(V)\imbed \Endtil(V)$,
where $s_b$, $s_d$ intertwine  the adjoint action of $End(V)$;
and for $E^{bd}\in End^{bd}(V)$ 
we have
\begin{equation}\label{Tate_can}
s_b(E^{bd})-s_d(E^{bd})=tr(E^{bd})\cdot c,
\end{equation}
where $c\in \Endtil(V)$ is the generator of the kernel of 
the projection $\Endtil(V)\to End(V)$.

Suppose now that a decomposition of $V$  
\begin{equation}\label{deco}
V=V^+\oplus V^- 
\end{equation} into a sum of a
bounded open 
 and a discrete subspaces is given,
 and let $\Pi$ denote the projection to the bounded open 
$V^+$ along the discrete $V^-$.
Then for any $E\in End(V)$ in the right-hand side
of $$E=E\cdot \Pi+E\cdot(Id-\Pi)$$
the first summand lies in $End^b(V)$, and the second one in $End^d(V)$.
Thus we get a splitting $s=s_\Pi:End(V)\to \Endtil(V)$,
\begin{equation} \label{split_idem}
s_\Pi(E)=s_b(E\cdot \Pi)+s_d(E\cdot(Id-\Pi)).
\end{equation}

It is also easy to see that for $E$ preserving $V_-$ (respectively, $V_-$)
the element $s_\Pi(E)$ is independent of the choice of the complement 
$V_-$ (respectively, $V_+$). Thus we get a canonical splitting $s_{V^+}$ (respectively, $s_{V^-}$) of the central extension
on the subalgebras $End_{V_+}(V)$, $End_{V^-}(V)$ consisting of endomorphisms
preserving $V_+$ (respectively, $V_-$).

Denote by $C(E_1,E_2)$ the corresponding 2-cocycle of $End(V)$,
i.e. a bi-linear functional, such that $$[s(E_1),s(E_2)]=
s([E_1,E_2])+C(E_1,E_2)\cdot c.$$
Then we have 
\begin{equation}\label{C}
C(E_1,E_2)=Tr(E_1\circ \Pi \circ E_2 \circ (Id-\Pi) -
E_2\circ \Pi \circ E_1 \circ (Id-\Pi)).
\end{equation}

Finally, suppose that another discrete cobounded space $W\subset V$ is fixed.
% Let $End_W(V)\subset End(V)$ and $End_{V^+}(V)\subset End(V)$ 
%be the stabilizer of $W$, $V_+$ respectively. 
The splitting $s_W$ of the central extension  on $End_W(V)$ yields a linear functional
$\sigma_W=s_W-s_\Pi|_{End_W(V)}$ on $End_W(V)$.

\begin{example}
We have 
\begin{equation}\label{sigma_Id}
\sigma_W(Id)=\dim(V^+\cap W)-{\mathrm{co}}\dim_V(V^+ +W).
\end{equation}
Also, suppose that $F$ is an automorphism of $V$ such that either $F(V^+)\subset V^+$, or 
$V^+\subset F(V^+)$; set $d_{V^+}(F)=-\dim (V^+/F(V^+))$ in the former  and 

$d_{V_+}(F)=\dim (F(V^+)/V^+)$ in the latter case. Then 
\begin{equation}\label{C_F}
C(F,F^{-1})=d_{V^+}(F).
\end{equation}
Both equalities follow directly from the definitions.
\end{example}

Consider now the complex 
\begin{equation}\label{Bargen}
Cone\left( Bar(End_{V^+}(V))\oplus Bar(End_W(V)) \to Bar(End(V)) \right)[-1]
\end{equation}
and define a zero-cochain for the dual complex $\epsilon = (0,\sigma_W, C)$.

It is easy to check that the zero-cochain $\epsilon$ is in fact a cocycle
whose cohomology class does not depend on the choice of $V^-$ for 
a fixed $V^+$.

\subsubsection{Weighted orbital integral and Tate cocycle}
We now apply this in the following example. Consider the Tate space 
\begin{equation}\label{enterX}
V=\Hh\cong \wh{\mcS(Y)}\cong \wh{\mcS(X)}
\end{equation}
where $X$ was defined in the second paragraph of \S \ref{EHo},
 the first isomorphism was discussed above and the second one is induced by the inverse intertwining operator.
 Let $W$ be the image of $\H$ in the space \eqref{enterX} and $ \wh{\mcS(X)^+}$ be the space of functions supported on $X_+$ while $\wh{\mcS(X)^-}$ is the space of functions supported on the complement of $X_+$. 
 
The group $K_0$ acts on all these spaces. Fix a representation $\rho$ of $K_0$ and set
 $V_\rho=Hom_{K_0}(\rho, V)$,  $W_\rho=Hom_{K_0}(\rho, \H)$ and $V_\rho^\pm=Hom_{K_0}(\rho, \wh{\mcS(X)}^\pm)$.
 
 The complex \eqref{BarSmb} maps naturally to the complex \eqref{Bargen} constructed from $V=V_\rho$
 $V^\pm=V_\rho^\pm$ and $W=W_\rho$, let $a_\rho$ denote that map.
 
\begin{Lem}\label{WOeps} 
We have $a_\rho^*(\epsilon)=\int_{K_0} \tau_g Tr(g,\rho) dg$.
\end{Lem}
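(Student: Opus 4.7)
The plan is to verify the equality of two $0$-cochains on $Bar(\Smb)^*$ by comparing them componentwise along the cone decomposition of $Bar(\Smb)$. First I would identify the map $a_\rho$ on each summand: $\H\to End_{W_\rho}(V_\rho)$ (since $\H$ preserves $W$), $\Hh^+\to End_{V_\rho^+}(V_\rho)$ (since $\Hh^+$ preserves $V^+=\wh{\mcS(X)^+}$), and $\Hh\to End(V_\rho)$. With this identification each of the three components of $\epsilon$ pulls back to the component of $Bar(\Smb)^*$ hosting the corresponding component of $\tau_g$.

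The $Bar(\Hh^+)$ component is trivial: the $End_{V^+}$-part of $\epsilon$ equals $0$ and $\tau_+(g)=0$ for all $g$. For the $Bar(\Hh)^{\otimes 2}$ component, cyclicity of the trace on the ideal $End^{bd}(V)$ simplifies formula \eqref{C} to
$$C(E_1,E_2)=Tr([E_1,\Pi]E_2),$$
and then Schur orthogonality applied to the $K_0$-equivariant trace-class operator $[\alpha,\Pi]\beta$, in the form $Tr(A,V_\rho)=\int_{K_0}Tr(A\circ g,V)\,Tr(g,\rho)\,dg$, produces exactly $\int_{K_0}\hat\tau(g)(\alpha\otimes\beta)\,Tr(g,\rho)\,dg$.

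The substantive component is $Bar(\H)$: for $h\in\H$ I must show
$$\sigma_{W_\rho}(h|_{V_\rho})=\int_{K_0}WO_g(h)\,Tr(g,\rho)\,dg.$$
The strategy is to exhaust $V_\rho$ by finite-dimensional subspaces indexed by $K$-orbits in $X$, equivalently by vertices of the Bruhat--Tits tree, and express $\sigma_{W_\rho}(h|_{V_\rho})$ as an Euler-characteristic-type invariant via \eqref{sigma_Id} and \eqref{C_F} applied to the difference of the splittings $s_{W_\rho}$ and $s_\Pi$ on the exhaustion. On the other side, for $G=SL(2)$ Arthur's weight function is an explicit piecewise-linear function on the tree measuring the position of the conjugating element relative to the fundamental apartment (see \cite[\S I.11]{ArIn}), so $\int_{K_0}WO_g(h)Tr(g,\rho)dg$ can likewise be written as a sum over tree vertices of local contributions.

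The main obstacle is this last identification: matching the Tate-theoretic Euler characteristic with Arthur's weight on the nose. I plan to reduce to the case where $h$ is the characteristic function of a double coset $Kg_\lambda K$ for $\lambda\in X_B^+$, since these span $\H_K$, and then verify the identity by an explicit combinatorial computation on the tree, treating the split regular semisimple contribution (where the weight is non-trivial and the Tate defect is also non-trivial) and the elliptic contribution (where Arthur's weight reduces to the ordinary orbital integral) separately.
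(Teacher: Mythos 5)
Your handling of the $Bar(\Hh^+)$ and $Bar(\Hh)^{\otimes 2}$ components matches the paper's (the paper dismisses these ``by inspection''; your trace manipulation and Schur orthogonality remarks are a reasonable expansion of that). The problem is the $Bar(\H)$ component, which you correctly identify as ``the substantive component'' and ``the main obstacle,'' but then leave as a plan rather than a proof. That is precisely the part of the lemma with content, and as written you have not proved it.

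More specifically, what is missing is a single structural observation that makes the paper's argument close without a case-by-case tree combinatorics. The weighted orbital integral is defined by regularization: $WO_g(f)=\phi(0)$ where $\phi(\lambda)=Tr(f\circ\Pi_\lambda\circ g,\H)$ agrees with an affine-linear function of $\lambda$ for $\lambda\gg 0$, and one evaluates that affine-linear extension at $\lambda=0$. Averaging against a regular semisimple test function $\psi$ on $K_0$ one gets a single affine-linear function $\phi_\psi(\lambda)=Tr(f\circ\Pi_\lambda\circ\psi,\H)$ for $\lambda\gg 0$ with $\int_{K_0}WO_g(f)\psi(g)\,dg=\phi_\psi(0)$. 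The point is that $\sigma_W(f\otimes\psi)$, by the very definition of the Tate defect as the difference of the two splittings, is computed by \emph{exactly the same} affine-linear extrapolation: Lemma \ref{XYLem} identifies $V^+=\wh{\mcS(X^+)}$ with the geometric filtration on $\H$ transported through the intertwining operator, so that the Tate comparison $s_W-s_\Pi$ is read off from the large-$\lambda$ behavior of $Tr(f\Pi_\lambda\psi)$. Your proposal instead wants to compute $\sigma_{W_\rho}(h|_{V_\rho})$ directly via \eqref{sigma_Id} and \eqref{C_F} on a tree exhaustion; but those identities only cover the identity operator and automorphisms moving $V^+$, not a general $h\in\H$, so they do not by themselves produce the required identity -- you would in effect have to rediscover the affine-extrapolation description.

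There is also a second ingredient you omit: the identity is first established only for $\psi$ supported on regular semisimple elements of $K_0$ (where the weight function is genuinely defined pointwise), and is then extended to all $\psi$ by an $L^1$-continuity argument; the continuity of the left-hand side is not automatic and relies on Theorem \ref{imbedding}. Your plan of reducing to characteristic functions of double cosets and treating the split and elliptic loci separately does not address how to pass across the singular set, which is where this continuity step does the work.
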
 

\proof  
Equality of components in $(\Hh^{\otimes 2})^*$ follows by inspection.

It remains to check equality of components in $\H^*$.   Recall (see e.g. \cite[\S I.11]{ArIn})
that $WO_g(f)=\phi(0)$, where $\phi$ is a linear function on dominant weights
such that for large $\lambda$ we have
$$\phi(\lambda)=Tr(f\circ \Pi_\lambda \circ g,\H).$$ 
Here $f$ acts on $\H$ by convolution on the left, $g$ acts by right translation and
$\Pi_\lambda$ is the characteristic function of $G_{\leq \lambda}$ which is the
union of two sided $K_0=G(O)$ cosets corresponding to $\mu\leq \lambda$. 

It follows that for a locally constant function $\psi$ on $K_0$ supported on regular
semisimple elements there exists an affine linear function $\phi_\psi(\lambda)$, such that
$$\phi_\psi(\lambda)=Tr(f\circ \Pi_\lambda \circ \psi,\H) \ \ \ \ \ \ {\mathrm{for}} \ \ \lambda\gg 0,$$
$$\int_{K_0}WO_g(f)\psi(g)dg=\phi_\psi(0),$$
%\begin{equation}
%\end{equation}
where $\psi$ acts by convolution on the right. 

Comparing this with the definition of Tate cocycle $\epsilon$ and of
the intertwining bimodule $\Hb'$ (see Example \ref{int_bim}) and using
Lemma \ref{XYLem}, we see that
for $\psi$ as above: 
$$\int_{K_0}WO_g(f)\psi(g)dg = \sigma_{\H} (f\otimes \psi),$$
where $f\otimes \psi$ acts on the Tate space \eqref{enterX} via its natural $\H$-bimodule
structure.
Both sides of the last equality are continuous in $\psi$ with respect to the $L^1$ norm: this is clear for the right hand side
and it follows from Theorem \ref{imbedding} for the left hand side. Thus validity of the equality
for $\psi$ supported on regular semisimple elements implies its validity for all $\psi$.    \qed

\subsubsection{Sheaves of algebras on curves}
We now apply the above construction in the following setting. Let $C$ be a smooth\footnote{This assumption is likely unnecessary but it allows one to 
simplify the statements and the proofs.} complete curve with a finite collection
of points ${\bf{x}}=\{x_1,\dots,x_n\}$. Let $\hat{C}_{\bf{x}}$, $\hat{C}_{\bf{x}}^0$
be the formal neighborhood and the punctured formal neighborhood of $\bf{x}$ respectively. Let
$\Ve$ be a torsion free coherent sheaf on $C$, and set $V=\Gamma(\hat{C}_{\bf{x}}^0, \Ve)$,
 $W=\Gamma(C\setminus \bf{x},\Ve)$, $V^+=\Gamma(\hat{C}_{\bf{x}}, \Ve)$.
Fixing formal coordinates $z_i$ at $x_i$ and trivializations of $\Ve$ on the formal neighborhood of $x_i$ we get
an isomorphism $V\cong \Ce((z))^N$ sending $V_+$ to $\Ce[[z]]^N$, thus we get a splitting $V=V^+\oplus V^-$
where $V^-\cong z^{-1}\Ce[z^{-1}]^N$. 

Let $\A$ be a torsion free 
%locally free 
coherent sheaf of algebras on $C$ with a right action on $V$;
we assume also that $\A$ has finite homological dimension (i.e.
that the algebra of sections of $\A$ on an affine open set has this property).

Let $A=\Gamma(C\setminus \bf{x},\A)$, $\hat{A}^+=\Gamma(\hat{C}_{\bf{x}}, \A)$
and  $\hat{A}=\Gamma(\hat{C}_{\bf{x}}^0, \A)$.
 Then the complex 
 \begin{equation}\label{BarA}
Cone\left( Bar(A)\oplus Bar(\hat{A}^+)\to Bar(\hat{A}) \right)[-1]
\end{equation}
computes Hochschild homology of the category $\A-mod$. On the other hand, it maps naturally 
to the complex \eqref{Bargen}, let $\alpha$ denote this map.

\begin{lem}
Suppose that $\Ve$ is a locally projective sheaf of right $\A$-modules.
Then for a coherent sheaf
$\MM$ of (left) $\A$-modules we have:
\begin{equation}\label{VeM}
\langle ch(\MM), \alpha^*(\epsilon)\rangle = \chi(\Ve\otimes_{\A}\MM),
\end{equation}
where $\chi$ denotes Euler characteristic.
\end{lem}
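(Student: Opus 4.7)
The plan is to compute both sides of \eqref{VeM} explicitly in terms of a local projective presentation of $\MM$ adapted to the cover $\{C \setminus \mathbf{x},\ \hat C_{\mathbf{x}}\}$ and to match them using the defining identities of the Tate cocycle. Both sides are linear functionals of the class of $\MM$ in $HH_0(\A\text{-mod})$: the right-hand side factors through $K_0$ because local projectivity of $\Ve$ makes $\Ve \otimes_\A -$ exact, so Euler characteristics are additive on short exact sequences; the left-hand side tautologically.

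First I would fix integers $n, m$ and data $e \in Mat_n(A)$, $e^+ \in Mat_m(\hat A^+)$, $a \in Mat_{n,m}(\hat A)$, $b \in Mat_{m,n}(\hat A)$ with $ab = e$, $ba = e^+$, together with isomorphisms $\MM|_{C \setminus \mathbf{x}} \cong A^n e$ and $\MM|_{\hat C_{\mathbf{x}}} \cong (\hat A^+)^m e^+$ compatible on the punctured formal neighborhood via $a$ and $b$. In the spirit of the second item of Example~\ref{ExHoch}, this data represents $ch(\MM)$ by the cocycle
\[
c \;=\; \Bigl(\sum_i e_{ii},\ \sum_j e^+_{jj},\ \sum_{ij} a_{ij} \otimes b_{ji}\Bigr) \;\in\; A \,\oplus\, \hat A^+ \,\oplus\, (\hat A \otimes \hat A)
\]
in the complex \eqref{BarA}. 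Pushing it forward along $\alpha$ and pairing with $\epsilon = (0, \sigma_W, C)$ yields
\[
\langle ch(\MM), \alpha^*(\epsilon) \rangle \;=\; \sigma_W(\tilde e) + C(\tilde a, \tilde b),
\]
where $\tilde e$, $\tilde a$, $\tilde b$ denote the operators on (matrices of copies of) the Tate space $V$ induced from the right $\hat A$-action on $\Ve$, and where $\sigma_W$, $C$ are extended to matrix algebras by composition with the matrix trace.

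On the Euler characteristic side, the Mayer--Vietoris sequence for the same formal cover of $C$, applied to $\mcN := \Ve \otimes_\A \MM$, reads
\[
0 \to H^0(C, \mcN) \to \widetilde W \oplus \widetilde V^+ \to \widetilde V \to H^1(C, \mcN) \to 0,
\]
with $\widetilde W := W^n e$, $\widetilde V^+ := (V^+)^m e^+$, and $\widetilde V := V^m e^+ \cong V^n e$, the latter isomorphism realized via $a$ and $b$. This exhibits $\widetilde V$ as a Tate space with bounded open subspace $\widetilde V^+$ and discrete subspace $\widetilde W$, so by \eqref{sigma_Id}
\[
\chi(\mcN) \;=\; \dim(\widetilde W \cap \widetilde V^+) \,-\, \mathrm{codim}_{\widetilde V}(\widetilde W + \widetilde V^+) \;=\; \sigma_{\widetilde W}(\mathrm{Id}_{\widetilde V}).
\]

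The heart of the argument---and the step I expect to be the main obstacle---is the Tate-algebraic identity
\[
\sigma_W(\tilde e) + C(\tilde a, \tilde b) \;=\; \sigma_{\widetilde W}(\mathrm{Id}_{\widetilde V}).
\]
The mechanism is this: the idempotent $\tilde e$ realizes $\widetilde V \cong V^n e$ as a direct summand of $V^n$, and compatibility of the Tate cocycle with direct summands would already force $\sigma_W(\tilde e) = \sigma_{\widetilde W}(\mathrm{Id}_{\widetilde V})$ if one used the \emph{naive} bounded open subspace $(V^+)^n \cdot e \subset V^n e$ to evaluate the right-hand side. The \emph{actual} bounded open subspace $\widetilde V^+ = (V^+)^m e^+$ is transported from the formal neighborhood side via the gluing $a, b$, and the term $C(\tilde a, \tilde b)$, computed from \eqref{C}, records precisely the integer discrepancy between these two choices of bounded open subspace of $\widetilde V$, in direct parallel with \eqref{C_F} in the invertible case. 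Verifying this reduces to a careful manipulation of the splittings $s_b$, $s_d$ of the central extension on the ideals $End^b(V)$, $End^d(V)$ and their relation \eqref{Tate_can} on $End^{bd}(V)$; once it is in hand, the three displays combine to prove the lemma.
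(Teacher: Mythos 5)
Your proposal takes a genuinely different route from the paper, and the core of it is a gap: you reduce the lemma to the Tate-algebraic identity
\[
\sigma_{W^n}(\tilde e) + C(\tilde a, \tilde b) \;=\; \sigma_{\widetilde W}\bigl(\mathrm{Id}_{\widetilde V}\bigr),
\]
and then you explicitly flag this as ``the main obstacle,'' saying only that ``verifying this reduces to a careful manipulation of the splittings $s_b$, $s_d$.'' That manipulation is precisely the content of the lemma, and it is nontrivial: $\sigma_W$ is defined relative to a fixed $\Pi$ on $V^n$, while $\widetilde V^+$ is transported from $(V^+)^m e^+$ through the gluing isomorphism $\tilde a$, so one must track how the canonical splittings $s_b, s_d$ behave both under passage to the direct summand $V^n e \subset V^n$ and under the change of polarization $\Pi^{\oplus n}|_{V^n e} \rightsquigarrow \tilde a\,\Pi^{\oplus m}\,\tilde b$. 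The assertion that $C(\tilde a, \tilde b)$ ``records precisely the integer discrepancy'' between the two polarizations is plausible (and it checks out in the rank-one case $\MM = \O(nx)$, where it reduces to \eqref{C_F}), but you have not derived it from \eqref{Tate_can}--\eqref{C}, and doing so in the presence of non-invertible $a, b$ and different ranks $n \neq m$ is exactly the part that requires an argument. Until that identity is proved, the chain does not close.

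The paper avoids this computation entirely by a sequence of reductions: replace $\A$ by $\underline{End}_{\O_C}(\Ve)^{op}$, then by $\O_C$; use additivity of both sides in short exact sequences plus the fact that on a smooth curve $K^0$ is generated by line bundles to reduce to $\MM$ a line bundle; observe that both sides depend only on $\deg(\MM)$ (for the pairing side because a regular function on the Jacobian is constant); and finally verify the two cases $\MM = \O$ and $\MM = \O(nx)$ directly from \eqref{sigma_Id} and \eqref{C_F}. Your approach, if the Tate identity were actually established, would be more direct and would not rely on the generation of $K^0(C)$ by line bundles or on compactness of the Jacobian --- so it is arguably a cleaner strategy in the abstract. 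But as written it trades the paper's elementary reductions for a harder unproved lemma, and the burden of proof sits exactly where you left it.

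One smaller point: your Mayer--Vietoris step presents $\widetilde V^+$ as a subspace of $V^m e^+$ and $\widetilde W$ as a subspace of $V^n e$, identified inside a common $\widetilde V$ only through $\tilde a, \tilde b$. You should say explicitly that under this identification $\widetilde V^+$ is a bounded open subspace and $\widetilde W$ a discrete cobounded subspace of the same Tate space, and that the Mayer--Vietoris differential really is the difference of inclusions $\widetilde W \oplus \widetilde V^+ \to \widetilde V$ after this identification. This is true but is exactly the kind of bookkeeping that, once done carefully, is most of the work of the unproved identity.
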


\proof Both sides of \eqref{VeM} do not change if we replace $\A$ by 
$\underline{End}_{\O_C}(\Ve)^{op}$ and $\MM$ by $\underline{End}_{\O_C}(\Ve)^{op}\otimes_\A \MM$.
Thus we can assume without loss of generality that $\A=\underline{End}_{\O_C}(\Ve)^{op}$.
In that case we have a canonical equivalence of categories $Coh(C)\cong \A-mod$,
$\F\mapsto \F\otimes_{\O_C}\Ve^*$.

Furthermore, the operator $[\F]\mapsto [\F\otimes \Ve]$ 
induces an automorphism of the rational Grothendieck group $K^0(Coh(C))\otimes \Qu$,
thus it suffices to prove \eqref{VeM} for $\A=\underline{End}_{\O_C}(\Ve)^{op}$, $\MM=\F\otimes \Ve^*$, where $\F=\Ve\otimes \F'$,
thus $\F=\F'\otimes _{\O_C}\A$.
Now, both sides of \eqref{VeM} do not change if we replace $\A$ by $\O_C$
and $\MM$ by $\F'$; the locally projective module $\Ve$ is replaced by 
the corresponding coherent sheaf $\Ve|_{\O_C}$. Thus we have reduced to the case $\A=\O_C$.

It is easy to see that both sides of \eqref{VeM} are additive on short exact sequences
as a function of both $\Ve$ and $\F$.
A locally free sheaf on a curve admits a filtration whose associated graded is a sum of line bundles;
also, since $C$ is smooth the Grothendieck group $K^0(Coh(C))$ is generated by line bundles.
Thus  we can assume without loss of generality  that both $\MM$ and $\Ve$ are line bundles. 

If $\MM$ is a trivial line bundle then $ch(\MM)$ is represented by the cocycle $(1,1,1\otimes 1)$, so \eqref{VeM} follows from \eqref{sigma_Id}. 
Notice also that both sides of \eqref{VeM} for a fixed $\Ve$ depend only on the degree of the line
bundle $\MM$ (here we assume without loss of generality that the curve $C$ is irreducible): this is clear for the right hand side, while 
for the left hand side it follows from the fact that a regular function on an abelian variety is constant.
Thus it suffices to consider the case when $\MM=\O(nx)$, $x\in \bf{x}$. In this case $ch(\MM)$ is represented by the cocycle
$(1,1,f\otimes f^{-1})$ where $f$
is a function on the formal punctured neighborhood of $\bf{x}$ having order $n$ at $x$ and order $0$ at other points.
In this case \eqref{VeM} follows from \eqref{sigma_Id} and \eqref{C_F}. \qed

\begin{cor} \label{Eulerchar} a)
Suppose that $\Ve\cong {\bf e} \A $ is a direct summand in a free rank one module for an idempotent
${\bf e} \in \Gamma(C,\A)$. Then for a coherent sheaf
$\MM$ of $\A$-modules we have:
$$\langle ch(\MM), \alpha^*(\epsilon)\rangle = \chi({\bf e}\MM),$$
where $\chi$ denotes the Euler characteristic.

b) Suppose that $\MM$ has finite support contained in $C\setminus \bf{x}$. Then
the equality in (a) holds under a weaker assumption that $\Ve|_{C\setminus \bf{x}}\cong \A {\bf e} |_{C\setminus \bf{x}}$.
\end{cor}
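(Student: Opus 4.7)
My plan treats the two parts separately.

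Part (a) is immediate from the preceding Lemma. The hypothesis produces a global idempotent $\mathbf{e} \in \Gamma(C,\A)$ with $\Ve \cong \mathbf{e}\A$, and the decomposition $\A = \mathbf{e}\A \oplus (1-\mathbf{e})\A$ exhibits $\Ve$ as a direct summand of the free rank-one right $\A$-module $\A$; in particular $\Ve$ is locally projective. The Lemma together with the canonical identification $\mathbf{e}\A \otimes_\A \MM \cong \mathbf{e}\MM$ (sending $\mathbf{e}a \otimes m$ to $\mathbf{e}(am)$) then yields $\langle ch(\MM), \alpha^*(\epsilon)\rangle = \chi(\mathbf{e}\MM)$.

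For part (b), the plan is to deduce the formula from a K-theoretic refinement of the Lemma: for any torsion-free coherent right $\A$-module $\Ve$ (without requiring local projectivity on all of $C$) and any coherent left $\A$-module $\MM$,
$$\langle ch(\MM),\, \alpha_\Ve^*(\epsilon)\rangle \;=\; \chi(\Ve \Lotimes_\A \MM).$$
Granting this, part (b) follows at once: since $\MM$ is supported in $U := C\setminus\mathbf{x}$ where $\Ve|_U \cong \mathbf{e}\A|_U$ is locally projective over $\A|_U$, the derived tensor reduces to the underived one and equals $\Ve|_U \otimes_{\A|_U} \MM = \mathbf{e}\MM$, matching part (a).

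I would establish the refinement in two steps. First, I would check that the assignment $\Ve \mapsto [\alpha_\Ve^*(\epsilon)]$ is additive on short exact sequences $0 \to \Ve_1 \to \Ve_0 \to \Ve_2 \to 0$ of torsion-free coherent right $\A$-modules. The sections of $\Ve_i$ over $\hat C_{\mathbf{x}}^0$, $\hat C_{\mathbf{x}}$, and $C\setminus\mathbf{x}$ yield compatible short exact sequences of Tate vector spaces $V_i$, lattices $V_i^+$, and discrete cobounded subspaces $W_i$; the actions of $A$, $\hat A^+$, $\hat A$ on $V_0$ become block-upper-triangular with diagonal blocks equal to the actions on $V_1$ and $V_2$. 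Using \eqref{obnul} and \eqref{Tate_can}, one verifies that the pulled-back cocycles satisfy $[\alpha_{\Ve_0}^*(\epsilon)] = [\alpha_{\Ve_1}^*(\epsilon)] + [\alpha_{\Ve_2}^*(\epsilon)]$ in the cohomology of the dual of \eqref{BarA}. Second, using finite homological dimension of $\A$, any torsion-free $\Ve$ admits a finite locally projective resolution $\mcP_\bullet \to \Ve$; by additivity together with the Lemma applied termwise,
$$\langle ch(\MM), \alpha_\Ve^*(\epsilon)\rangle = \sum_i(-1)^i \chi(\mcP_i \otimes_\A \MM) = \chi(\Ve \Lotimes_\A \MM).$$

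The main obstacle is the additivity step. Because $V_0$ is only an extension of $V_2$ by $V_1$, not a direct sum, the off-diagonal block of the action of $a \in A$ on $V_0$ produces terms in the central extension $\Endtil(V_0)$ that a priori obstruct additivity of the splittings $s_W$ and $s_\Pi$ defining the components $\sigma_W$ and $C$ of $\epsilon$. Showing these obstructions are coboundaries is a direct but somewhat intricate manipulation relying on the defining relation \eqref{Tate_can} of the Tate extension and on the vanishing \eqref{obnul} of traces of commutators involving endomorphisms of bounded image or open kernel. Once additivity is in hand, the rest of the argument is formal.
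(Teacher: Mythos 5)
Your part (a) is correct and immediate: $\mathbf{e}\A$ is a direct summand of the free rank one module, hence locally projective, and $\mathbf{e}\A\otimes_\A \MM\cong \mathbf{e}\MM$ canonically, so the preceding Lemma applies verbatim. The paper gives no separate proof of the Corollary, so there is no ``intended route'' to compare against.

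For part (b), your overall strategy (prove additivity of $\Ve\mapsto[\alpha_\Ve^*(\epsilon)]$ on short exact sequences of torsion-free right $\A$-modules, then resolve $\Ve$ by locally projectives and invoke the Lemma termwise) is correct, and is in fact a natural way to fill the gap the paper leaves implicit. Your reduction from the refined formula $\langle ch(\MM),\alpha_\Ve^*(\epsilon)\rangle=\chi(\Ve\Lotimes_\A\MM)$ to the stated equality $\chi(\mathbf{e}\MM)$ is also right, using that $\MM$ is supported where $\Ve$ agrees with $\mathbf{e}\A$. The one place you signal doubt --- the ``somewhat intricate'' additivity step --- is actually more mechanical than you suggest, and I would encourage you to just carry it out. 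Given a short exact sequence $0\to\Ve_1\to\Ve_0\to\Ve_2\to 0$ of torsion-free $\A$-modules, the induced sequences of $V_i$, $V_i^+$, $W_i$ are all exact (the relevant base changes are flat, and $C\setminus\mathbf{x}$ and $\hat C_{\mathbf{x}}$ are affine). Choose a continuous section $s\colon V_2\to V_0$ with $s(V_2^+)\subset V_0^+$ (the sequence of open bounded lattices splits), and set $V_0^-=V_1^-\oplus s(V_2^-)$, so that $\Pi_0=\Pi_1\oplus\Pi_2$ is block diagonal in the splitting $V_0=V_1\oplus s(V_2)$. All the operators coming from $A$, $\hat A^+$, $\hat A$ are then block upper triangular (they preserve $V_1$), and so is $\Pi_{Y_0}-\Pi_0$. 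Since the Tate trace of a block upper triangular element of $\End^{bd}$ is the sum of the traces of its diagonal blocks, one gets $\sigma_{W_0}(a)=\sigma_{W_1}(a_1)+\sigma_{W_2}(a_2)$ and $C_0(E,F)=C_1(E_1,F_1)+C_2(E_2,F_2)$ on the nose, not merely up to coboundary; the off-diagonal blocks never contribute. So there is no obstruction to control, and no appeal to the central extension identities beyond the multiplicativity of the trace on $\End^{bd}$ is needed. With this filled in, your proof is complete.
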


\subsubsection{Proof of Proposition \ref{tau_prop}(a)}
It suffices to check that we get an equality upon averaging against a character
of any representation $\psi$ of $K_0=G(O)$. 

In view of Corollary \ref{Eulerchar}(b) it suffices to check that 
the cochain $\int_{K_0} \tau_g Tr(g,\psi) dg$  is obtained as in Corollary \ref{Eulerchar}
for an appropriate choice of trivialization for $\Ve$ on the formal neighborhood
of $\bf{x}$, where 
 $C\subset \Zb$ is the union of one dimensional components containing representations with nonzero $K$-invariant vectors, 
 $\bf{x}=(\Zb\setminus \Z)\cap C$, $\A=\Hb$, $z_i$ are the natural
local coordinates and $\Ve=Hom_{K_0}(\psi,\Hb')$. Applying Lemma \ref{WOeps}
we reduce to showing that $a_\psi^*(\epsilon)$ has the required form.

It follows from Lemma \ref{XYLem} that the space $V_+$ of sections of $\Hb'$ on the formal
neighborhood of $\Zb\setminus \Z$ is identified with $\wh{\mcS(X_+)}^{K\times K}$ (notation
introduced in the second paragraph of \S \eqref{EHo});
moreover, we can choose a trivialization of the sheaf $\Hb$ on the formal neighborhood
of $\Zb\setminus \Z$ so that constant sections correspond to functions supported on
the set $X(O)=X_+ \setminus (tX_+)$.
Then the space $V_-$ is identified with 
$\mcS(X\setminus X_+)^{K\times K}$. The desired equality now follows by inspecting
the definitions.  \qed

\subsection{Proof of part (b) of Theorem \ref{char_form}}
For $g\in K_0$ the formula follows from Proposition \ref{tau_prop}. The general case 
of a compact element follows by conjugating with an element of $GL(2,F)$. 
\qed

%\section{Appendix: Schneider-Stuhler resolution and explicit calculation of  character}

%\medskip

%To be copied from an expanded version of the "sl2$\underline{\ }$WO" file (or skipped).

  \end{document}